\newtheorem{theorem}{Theorem}[section]
\newtheorem{corollary}{Corollary}[theorem]
\newtheorem{lemma}{Lemma}[section]
\newtheorem{definition}{Definition}%[section]
\newtheorem{prop}{Proposition}[section]
\newtheorem{question}{Problem}
\newcommand\tr{(t,r)}
\newcommand\T{T_\infty}
\newdimen\R
\title{Broadcast Domination of Triangular Matchstick Graphs and the Triangular Lattice}
\author{Pamela E. Harris}
\address{Department of Mathematics and Statistics, Williams College, United States}
\email{peh2@williams.edu}
\thanks{P.\,E. Harris was supported by NSF award DMS-1620202.}
\author{Dalia K. Luque}
\address{Department of Mathematics and Statistics, Williams College, United States}
\email{dkl3@williams.edu}
\thanks{}
\author{Claudia Reyes Flores}
\address{Department of Mathematics and Statistics, Williams College, United States}
\email{cr8@williams.edu}
\thanks{}
\author{Nohemi Sepulveda}
\address{Department of Mathematics and Statistics, Williams College, United States}
\email{nms2@williams.edu}
\thanks{}
\keywords{Broadcast domination,
    Triangular grid,
    Triangular Matchstick Graph}
\date{\today}
\begin{document}

\maketitle
\begin{abstract}
Blessing, Insko, Johnson and Mauretour gave a generalization of the domination number of a graph $G=(V,E)$ called the $(t,r)$ broadcast domination number which depends on the positive integer parameters $t$ and $r$. In this setting, a vertex $v \in V$ is a broadcast vertex of transmission strength $t$ if it transmits a signal of strength $t-d(u,v)$ to every vertex $u \in V$, where $d(u,v)$ denotes the distance between vertices $u$ and $v$ and $d(u,v) <t$. Given a set of broadcast vertices $S\subseteq V$, the reception at vertex $u$ is the sum of the transmissions from the broadcast vertices in $S$. The set $S \subseteq V$ is called a $(t,r)$ broadcast dominating set if every vertex $u \in V$ has a reception strength $r(u) \geq r$ and for a finite graph $G$ the cardinality of a smallest broadcast dominating set is called the $(t,r)$ broadcast domination number of $G$. In this paper, we consider the infinite triangular grid graph and define efficient $(t,r)$ broadcast dominating sets as those broadcasts that minimize signal waste. Our main result constructs efficient $(t,r)$ broadcasts on the infinite triangular lattice for all $t\geq r\geq 1$. Using these broadcasts, we then provide upper bounds for the $(t,r)$ broadcast domination numbers for triangular matchstick graphs when $(t,r)\in\{(2,1),(3,1),(3,2),(4,1),(4,2),(4,3),(t,t)\}$.
\end{abstract}

% \tableofcontents

\section{Introduction} \label{sec:introduction}

The graph domination problem considers a finite graph $G=(V,E)$ and determines the minimal cardinality of a subset of the vertices $D\subseteq V$, such that every vertex in $V$ is either in the set $D$ or adjacent to a vertex in $D$. A set of vertices satisfying these conditions is called a \emph{dominating set} and the cardinality of a smallest dominating set is called the \emph{domination number} of the graph and is denoted $\gamma(G)$. 
The subject of graph domination continues to receive much attention in the literature since its establishment approximately 40 years ago. Graph theorists have studied the domination of many families of graphs, including grid graphs, complete grid graphs, cross product of paths, and graphs with minimum degree ~\cite{cherifietal,completegridgraph,DomInGraphs,Chang,DomInGraphsWithMinDegree,Jacobson}. Moreover, many variations on graph domination exist, such as total domination, $r$-domination, rainbow domination, and exponential domination ~\cite{rdomination,rainbow,DominatingBroadcasts,Gordon}. 

In 2015, Blessing, Insko, Johnson and Mauretour gave a generalization of the domination number of a graph $G$ called the $\tr$ \emph{broadcast domination number}, which  depends on the nonnegative integral parameters $t$ and $r$ \cite{Blessing}. 
To give the broadcast domination number of a graph we first need the following definitions. 
We say that $v \in V$ is a \emph{broadcast vertex of transmission strength $t$} if it transmits a \emph{signal} of strength $t-d(u,v)$ to every vertex $u \in V$ with $d(u,v) <t$. Given a set of broadcast vertices $S\subseteq V$, we say that the \emph{reception} at vertex $u$ is the sum of the transmissions from the broadcast vertices in $S$. That is, the reception at vertex $u$ is $$r(u)= \sum\limits_{\substack{v \in S \\ u \in N_t(v)}}(t-d(u,v)),$$ where $S$ is a \textit{broadcasting set} {consisting} of broadcasting vertices of strength $t$ and {$N_{t}(v)=\{u\in V:d(u,v)<t\}$} is the \textit{broadcast neighborhood of $v$}.
The set $S \subseteq V$ is called a $(t,r)$ \emph{broadcast dominating set} if every vertex $u \in V$ has a reception strength $r(u) \geq r$ and {for a finite graph $G$} the cardinality of a smallest broadcast dominating set is called the $\tr$ broadcast domination number of $G$ and is denoted $\gamma_{t,r}(G)$. {In this setting, $(t,r)$ broadcast domination theory generalizes domination and distance
domination theories for graphs. We note that} the $(2,1)$ broadcast number is the classical domination number of a graph.

The initial work of Blessing et al. established the $(t,r)$ broadcast domination numbers for small grid graphs with $(t,r)\in\{(2,2),(3,1),(3,2),(3,3)\}$, and provided upper bounds for these broadcast domination numbers for arbitrarily large grid graphs \cite{Blessing}. 

In this paper, we consider the infinite triangular grid graph $\T$ and define \emph{efficient} $\tr$ broadcast dominating sets of~$\T$ as follows.

\begin{definition}\label{def:efficient}
A $(t,r)$ broadcast dominating set $S$ for $T_\infty$ is said to be  \emph{efficient} if 
\begin{align}
    r(u)=\begin{cases}r&\mbox{if $d(u,v)\geq t-r$ for all $v\in S$}\\t-d(u,v)&\mbox{if $0\leq d(u,v)< t-r$ for exactly one $v\in S$}.\end{cases}
\end{align}
\end{definition}

In other words, efficient $\tr$ broadcasts are those broadcasts on the infinite triangular grid graph that minimize wasted signal in the sense that every vertex far away from broadcast towers receive exactly the signal strength needed and no more, while those vertices close to a tower only get their signal from that tower and from no other.

\begin{figure}[H]
    \centering
    \resizebox{.3\textwidth}{!}{\begin{tikzpicture}
\clip (1.5,.5) rectangle (11,11);
\foreach \x  in {0,...,25} {
        \draw[gray] (\x-11,0) -- ($(\x-11,0)+20*(0.5, {0.5*sqrt(3)})$);
        \draw[gray] (\x,0) -- ($(\x,0)+20*(-0.5, {0.5*sqrt(3)})$);
        \draw[gray] ($\x*(0, {0.5*sqrt(3)})$) -- ($\x*(15, {0.5*sqrt(3)})$);
        \draw[gray] (\x-5,0) -- ($(\x-5,0)+20*(-0.5, {0.5*sqrt(3)})$);
        \draw[gray] ($\x*(0, {0.5*sqrt(3)})$) -- ($\x*(-1, {0.5*sqrt(3)})$);
    }
\begin{scope}[shift={(5.5,-0.86)}]
\draw[thick, black] (0:\R) \foreach \x in {60,120,...,359} {
                -- (\x:\R)
            }-- cycle (90:\R);
\fill [red] (0:0) circle (3pt);
\end{scope}    
    
\begin{scope}[shift={(2,1.73)}]
\draw[thick, black] (0:\R) \foreach \x in {60,120,...,359} {
                -- (\x:\R)
            }-- cycle (90:\R);
\fill [red] (0:0) circle (3pt);
\end{scope}

\begin{scope}[shift={(2.5,6.06)}]
\draw[thick, black] (0:\R) \foreach \x in {60,120,...,359} {
                -- (\x:\R)
            }-- cycle (90:\R);
\fill [red] (0:0) circle (3pt);
\end{scope}

\begin{scope}[shift={(3,10.39)}]
\draw[thick, black] (0:\R) \foreach \x in {60,120,...,359} {
                -- (\x:\R)
            }-- cycle (90:\R);
\fill [red] (0:0) circle (3pt);
\end{scope}

\begin{scope}[shift={(6,3.46)}]
\draw[thick, black] (0:\R) \foreach \x in {60,120,...,359} {
                -- (\x:\R)
            }-- cycle (90:\R);
\fill [red] (0:0) circle (3pt);
\end{scope}

\begin{scope}[shift={(6.5,7.79)}]
\draw[thick, black] (0:\R) \foreach \x in {60,120,...,359} {
                -- (\x:\R)
            }-- cycle (90:\R);
\fill [red] (0:0) circle (3pt);
\end{scope}

\begin{scope}[shift={(7,12.12)}]
\draw[thick, black] (0:\R) \foreach \x in {60,120,...,359} {
                -- (\x:\R)
            }-- cycle (90:\R);
\fill [red] (0:0) circle (3pt);
\end{scope}

\begin{scope}[shift={(9.5,0.86)}]
\draw[thick, black] (0:\R) \foreach \x in {60,120,...,359} {
                -- (\x:\R)
            }-- cycle (90:\R);
\fill [red] (0:0) circle (3pt);
\end{scope}

\begin{scope}[shift={(13.5,2.6)}]
\draw[thick, black] (0:\R) \foreach \x in {60,120,...,359} {
                -- (\x:\R)
            }-- cycle (90:\R);
\fill [red] (0:0) circle (3pt);
\end{scope}

\begin{scope}[shift={(14,6.92)}]
\draw[thick, black] (0:\R) \foreach \x in {60,120,...,359} {
                -- (\x:\R)
            }-- cycle (90:\R);
\fill [red] (0:0) circle (3pt);
\end{scope}

\begin{scope}[shift={(10,5.19)}]
\draw[thick, black] (0:\R) \foreach \x in {60,120,...,359} {
                -- (\x:\R)
            }-- cycle (90:\R);
\fill [red] (0:0) circle (3pt);
\end{scope}

\begin{scope}[shift={(10.5,9.52)}]
\draw[thick, black] (0:\R) \foreach \x in {60,120,...,359} {
                -- (\x:\R)
            }-- cycle (90:\R);
\fill [red] (0:0) circle (3pt);
\end{scope}

\end{tikzpicture}}\hfill
    \resizebox{.3\textwidth}{!}{\begin{tikzpicture}
\clip (2,1) rectangle (11,11);
\foreach \x  in {0,...,25} {
        \draw[gray] (\x-11,0) -- ($(\x-11,0)+20*(0.5, {0.5*sqrt(3)})$);
        \draw[gray] (\x,0) -- ($(\x,0)+20*(-0.5, {0.5*sqrt(3)})$);
        \draw[gray] ($\x*(0, {0.5*sqrt(3)})$) -- ($\x*(15, {0.5*sqrt(3)})$);
        \draw[gray] (\x-5,0) -- ($(\x-5,0)+20*(-0.5, {0.5*sqrt(3)})$);
        \draw[gray] ($\x*(0, {0.5*sqrt(3)})$) -- ($\x*(-1, {0.5*sqrt(3)})$);
    }
    
\begin{scope}[shift={(2,1.75)}]
\draw[thick, black] (0:\R) \foreach \x in {60,120,...,359} {
                -- (\x:\R)
            }-- cycle (90:\R);
\fill [red] (0:0) circle (3pt);
\end{scope}

\begin{scope}[shift={(3,5.2)}]
\draw[thick, black] (0:\R) \foreach \x in {60,120,...,359} {
                -- (\x:\R)
            }-- cycle (90:\R);
\fill [red] (0:0) circle (3pt);
\end{scope}

\begin{scope}[shift={(3,5.2)}]
\draw[thick, black] (0:\R) \foreach \x in {60,120,...,359} {
                -- (\x:\R)
            }-- cycle (90:\R);
\fill [red] (0:0) circle (3pt);
\end{scope}

\begin{scope}[shift={(4,8.65)}]
\draw[thick, black] (0:\R) \foreach \x in {60,120,...,359} {
                -- (\x:\R)
            }-- cycle (90:\R);
\fill [red] (0:0) circle (3pt);
\end{scope}

\begin{scope}[shift={(7.5,9.53)}]
\draw[thick, black] (0:\R) \foreach \x in {60,120,...,359} {
                -- (\x:\R)
            }-- cycle (90:\R);
\fill [red] (0:0) circle (3pt);
\end{scope}

\begin{scope}[shift={(6.5,6.06)}]
\draw[thick, black] (0:\R) \foreach \x in {60,120,...,359} {
                -- (\x:\R)
            }-- cycle (90:\R);
\fill [red] (0:0) circle (3pt);
\end{scope}

\begin{scope}[shift={(5.5,2.58)}]
\draw[thick, black] (0:\R) \foreach \x in {60,120,...,359} {
                -- (\x:\R)
            }-- cycle (90:\R);
\fill [red] (0:0) circle (3pt);
\end{scope}

\begin{scope}[shift={(8,0)}]
\draw[thick, black] (0:\R) \foreach \x in {60,120,...,359} {
                -- (\x:\R)
            }-- cycle (90:\R);
\fill [red] (0:0) circle (3pt);
\end{scope}

\begin{scope}[shift={(9,3.46)}]
\draw[thick, black] (0:\R) \foreach \x in {60,120,...,359} {
                -- (\x:\R)
            }-- cycle (90:\R);
\fill [red] (0:0) circle (3pt);
\end{scope}

\begin{scope}[shift={(10,6.95)}]
\draw[thick, black] (0:\R) \foreach \x in {60,120,...,359} {
                -- (\x:\R)
            }-- cycle (90:\R);
\fill [red] (0:0) circle (3pt);
\end{scope}

\begin{scope}[shift={(11,10.42)}]
\draw[thick, black] (0:\R) \foreach \x in {60,120,...,359} {
                -- (\x:\R)
            }-- cycle (90:\R);
\fill [red] (0:0) circle (3pt);
\end{scope}

\begin{scope}[shift={(.5,7.8)}]
\draw[thick, black] (0:\R) \foreach \x in {60,120,...,359} {
                -- (\x:\R)
            }-- cycle (90:\R);
\fill [red] (0:0) circle (3pt);
\end{scope}

\begin{scope}[shift={(1.5,11.26)}]
\draw[thick, black] (0:\R) \foreach \x in {60,120,...,359} {
                -- (\x:\R)
            }-- cycle (90:\R);
\fill [red] (0:0) circle (3pt);
\end{scope}

\begin{scope}[shift={(5,12.12)}]
\draw[thick, black] (0:\R) \foreach \x in {60,120,...,359} {
                -- (\x:\R)
            }-- cycle (90:\R);
\fill [red] (0:0) circle (3pt);
\end{scope}

\begin{scope}[shift={(11.5,0.86)}]
\draw[thick, black] (0:\R) \foreach \x in {60,120,...,359} {
                -- (\x:\R)
            }-- cycle (90:\R);
\fill [red] (0:0) circle (3pt);
\end{scope}

\begin{scope}[shift={(12.5,4.33)}]
\draw[thick, black] (0:\R) \foreach \x in {60,120,...,359} {
                -- (\x:\R)
            }-- cycle (90:\R);
\fill [red] (0:0) circle (3pt);
\end{scope}

\end{tikzpicture}}\hfill
    \resizebox{.3\textwidth}{!}{\begin{tikzpicture}
\clip (2,1) rectangle (11,11);
\foreach \x  in {0,...,25} {
        \draw[gray] (\x-11,0) -- ($(\x-11,0)+20*(0.5, {0.5*sqrt(3)})$);
        \draw[gray] (\x,0) -- ($(\x,0)+20*(-0.5, {0.5*sqrt(3)})$);
        \draw[gray] ($\x*(0, {0.5*sqrt(3)})$) -- ($\x*(15, {0.5*sqrt(3)})$);
        \draw[gray] (\x-5,0) -- ($(\x-5,0)+20*(-0.5, {0.5*sqrt(3)})$);
        \draw[gray] ($\x*(0, {0.5*sqrt(3)})$) -- ($\x*(-1, {0.5*sqrt(3)})$);
    }
    
\begin{scope}[shift={(2,1.73)}]
\draw[thick, black] (0:\R) \foreach \x in {60,120,...,359} {
                -- (\x:\R)
            }-- cycle (90:\R);
\fill [red] (0:0) circle (3pt);
\end{scope}

\begin{scope}[shift={(3.5,4.33)}]
\draw[thick, black] (0:\R) \foreach \x in {60,120,...,359} {
                -- (\x:\R)
            }-- cycle (90:\R);
\fill [red] (0:0) circle (3pt);
\end{scope}

\begin{scope}[shift={(5,1.73)}]
\draw[thick, black] (0:\R) \foreach \x in {60,120,...,359} {
                -- (\x:\R)
            }-- cycle (90:\R);
\fill [red] (0:0) circle (3pt);
\end{scope}

\begin{scope}[shift={(6.5,4.33)}]
\draw[thick, black] (0:\R) \foreach \x in {60,120,...,359} {
                -- (\x:\R)
            }-- cycle (90:\R);
\fill [red] (0:0) circle (3pt);
\end{scope}

\begin{scope}[shift={(8,1.73)}]
\draw[thick, black] (0:\R) \foreach \x in {60,120,...,359} {
                -- (\x:\R)
            }-- cycle (90:\R);
\fill [red] (0:0) circle (3pt);
\end{scope}

\begin{scope}[shift={(9.5,4.33)}]
\draw[thick, black] (0:\R) \foreach \x in {60,120,...,359} {
                -- (\x:\R)
            }-- cycle (90:\R);
\fill [red] (0:0) circle (3pt);
\end{scope}

\begin{scope}[shift={(11,1.73)}]
\draw[thick, black] (0:\R) \foreach \x in {60,120,...,359} {
                -- (\x:\R)
            }-- cycle (90:\R);
\fill [red] (0:0) circle (3pt);
\end{scope}

\begin{scope}[shift={(2,6.93)}]
\draw[thick, black] (0:\R) \foreach \x in {60,120,...,359} {
                -- (\x:\R)
            }-- cycle (90:\R);
\fill [red] (0:0) circle (3pt);
\end{scope}

\begin{scope}[shift={(3.5,9.52)}]
\draw[thick, black] (0:\R) \foreach \x in {60,120,...,359} {
                -- (\x:\R)
            }-- cycle (90:\R);
\fill [red] (0:0) circle (3pt);
\end{scope}

\begin{scope}[shift={(5,6.93)}]
\draw[thick, black] (0:\R) \foreach \x in {60,120,...,359} {
                -- (\x:\R)
            }-- cycle (90:\R);
\fill [red] (0:0) circle (3pt);
\end{scope}

\begin{scope}[shift={(6.5,9.53)}]
\draw[thick, black] (0:\R) \foreach \x in {60,120,...,359} {
                -- (\x:\R)
            }-- cycle (90:\R);
\fill [red] (0:0) circle (3pt);
\end{scope}

\begin{scope}[shift={(8,6.93)}]
\draw[thick, black] (0:\R) \foreach \x in {60,120,...,359} {
                -- (\x:\R)
            }-- cycle (90:\R);
\fill [red] (0:0) circle (3pt);
\end{scope}

\begin{scope}[shift={(9.5,9.53)}]
\draw[thick, black] (0:\R) \foreach \x in {60,120,...,359} {
                -- (\x:\R)
            }-- cycle (90:\R);
\fill [red] (0:0) circle (3pt);
\end{scope}

\begin{scope}[shift={(12.5,9.53)}]
\draw[thick, black] (0:\R) \foreach \x in {60,120,...,359} {
                -- (\x:\R)
            }-- cycle (90:\R);
\fill [red] (0:0) circle (3pt);
\end{scope}

\begin{scope}[shift={(.5,9.53)}]
\draw[thick, black] (0:\R) \foreach \x in {60,120,...,359} {
                -- (\x:\R)
            }-- cycle (90:\R);
\fill [red] (0:0) circle (3pt);
\end{scope}

\begin{scope}[shift={(11,6.935)}]
\draw[thick, black] (0:\R) \foreach \x in {60,120,...,359} {
                -- (\x:\R)
            }-- cycle (90:\R);
\fill [red] (0:0) circle (3pt);
\end{scope}

\begin{scope}[shift={(11,12.13)}]
\draw[thick, black] (0:\R) \foreach \x in {60,120,...,359} {
                -- (\x:\R)
            }-- cycle (90:\R);
\fill [red] (0:0) circle (3pt);
\end{scope}

\begin{scope}[shift={(8,12.13)}]
\draw[thick, black] (0:\R) \foreach \x in {60,120,...,359} {
                -- (\x:\R)
            }-- cycle (90:\R);
\fill [red] (0:0) circle (3pt);
\end{scope}

\begin{scope}[shift={(5,12.13)}]
\draw[thick, black] (0:\R) \foreach \x in {60,120,...,359} {
                -- (\x:\R)
            }-- cycle (90:\R);
\fill [red] (0:0) circle (3pt);
\end{scope}

\begin{scope}[shift={(2,12.13)}]
\draw[thick, black] (0:\R) \foreach \x in {60,120,...,359} {
                -- (\x:\R)
            }-- cycle (90:\R);
\fill [red] (0:0) circle (3pt);
\end{scope}

\begin{scope}[shift={(12.5,4.33)}]
\draw[thick, black] (0:\R) \foreach \x in {60,120,...,359} {
                -- (\x:\R)
            }-- cycle (90:\R);
\fill [red] (0:0) circle (3pt);
\end{scope}

\begin{scope}[shift={(.5,4.33)}]
\draw[thick, black] (0:\R) \foreach \x in {60,120,...,359} {
                -- (\x:\R)
            }-- cycle (90:\R);
\fill [red] (0:0) circle (3pt);
\end{scope}

\end{tikzpicture}}
\caption{Efficient $(3,1)$, $(3,2)$, and $(3,3)$ broadcasts on $\T$.}
    \label{fig:exampleofefficient}
\end{figure}
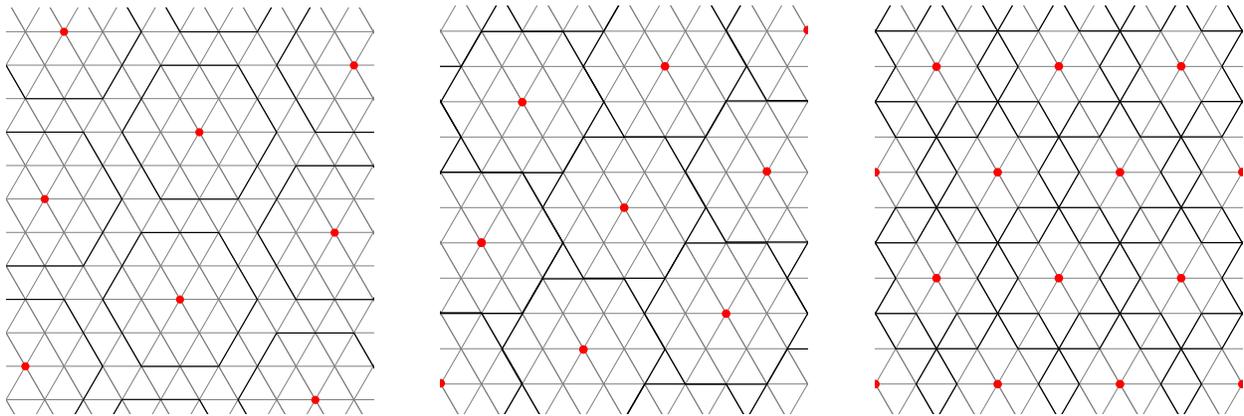

Figure \ref{fig:exampleofefficient} provides efficient $(3,1)$, $(3,2)$, and $(3,3)$ broadcasts on $\T$. In this figure, the red vertices are broadcasting vertices (which we also refer to as towers or dominators) while the vertices lying on the black boundaries are receiving signal 1 from each of the nearby towers. Note that the only ``waste'' present is innate waste, in the sense that each tower provides itself more signal than it needs since the signal strength is larger than the needed reception, but all remaining vertices receive exactly the signal required.

Note that Definition \ref{def:efficient} holds for any graph. However, we state it in terms of $T_\infty$, the graph we consider and for which we construct efficient $\tr$ broadcasts for all $t\geq r\geq 1$. 
We remark that efficient $\tr$ broadcasts of a graph are rare. For example, Drews, Harris, and Randolph recently considered the integer lattice graph and found optimal densities for $\tr$ broadcasts for all $t\geq 2$  and $r=1,2$. In their work density was defined intuitively as the proportion of the vertices of the infinite lattice graph contained in an infinite broadcast, i.e. the set of broadcasting vertices \cite{DrewsHarrisRandolph}. While it is currently unknown if there exist other efficient/optimal $\tr$ broadcasts for the integer lattice graph, our main result establishes efficient dominating patterns for the infinite triangular grid for all $t\geq r\geq 1$.
Using the results for the infinite triangular grid graph, we compute the $\tr$ broadcast domination number for triangular matchstick graphs $T_n$ for  some small $n$ and establish lower and upper bounds for the $(t,r)$ broadcast domination number for arbitrarily large triangular matchstick graphs  and $t \geq r\geq 1$.

This paper is organized as follows. Section \ref{sec:background} provides the necessary definitions and technical results to make our approach precise.
Section \ref{sec:finiteGn} introduces the concept of the reach of a dominator. We use this to compute the maximum size of a triangular matchstick graph so that the $(t,1)$ broadcast domination number is at most 3 and provide an upper bound for the $(t,1)$ domination number of $T_n$ for all $t,n\geq 1$.
Section \ref{sec:infinitegrid} studies the infinite triangular grid $T_\infty$ and presents efficient $(t,r)$ broadcast dominating patterns for all $t\geq r\geq 1$.
Using these broadcast dominating patterns, in Section \ref{sec:newbounds} we construct upper bounds for the $(t,r)$ domination number of $T_n$ for all $n\geq 1$ with $(t,r)\in\{(2,1),(3,1),(3,2),(4,1),(4,2),(4,3),(t,t)\}$. 
We end the paper with a section on open problems with future directions for research.

\section{Background}\label{sec:background}

In this section we set notation and provide the necessary definitions to make our approach precise.
We begin by defining the \textit{infinite triangular grid graph}, denoted $\T$, as the graph whose vertex set, $V(\T)$, can be put in a correspondence with points $(x,y) = \left (\frac{1}{2}a - b, \frac{\sqrt{2}}{3}a \right)$, where $a,b \in \mathbb{Z}$, and where two vertices are adjacent only if their corresponding coordinate entries are a Eucledian unit distance apart. We let $T_n$ denote the finite triangular grid graph, also known as the \textit{triangular matchstick arrangement graph of side $n$}, which is a subgraph of $\T$ with vertex set
 $\left\{ \left( \frac{1}{2}a - b, \frac{\sqrt{3}}{2}a \right) \;:\; a,b \in \mathbb{Z}, 0 \leq b \leq a \leq n \right\}. $
Figure \ref{fig:interioredges} illustrates the triangular matchstick graphs $T_n$ for $n=1,2,3,4$.
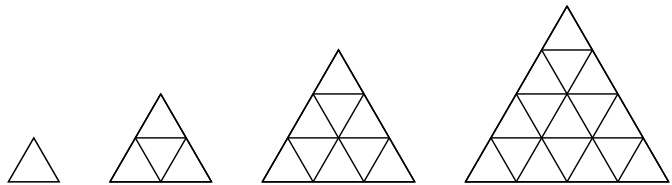
\begin{figure}[h]
    \centering
    \resizebox{3.5in}{!}{%
    \begin{tikzpicture}
\draw[line width=0.3mm, black](0,0)--(.5,.866)--(1,0)--cycle;

\begin{scope}[shift={(2,0)}]
    \foreach \row in {0, 1, ..., 2} {
        \draw[thick,black] ($\row*(0.5, {0.5*sqrt(3)})$) -- ($(2,0)+\row*(-0.5, {0.5*sqrt(3)})$);
        \draw[thick,black] ($\row*(1, 0)$) -- ($(2/2,{2/2*sqrt(3)})+\row*(0.5,{-0.5*sqrt(3)})$);
        \draw[thick,black] ($\row*(1, 0)$) -- ($(0,0)+\row*(0.5,{0.5*sqrt(3)})$);
    }
\draw[line width=0.3mm, black](0,0)--(1,1.73)--(2,0)--cycle;
\end{scope}

\begin{scope}[shift={(5,0)}]
    \foreach \row in {0, 1, ..., 3} {
        \draw[thick,black] ($\row*(0.5, {0.5*sqrt(3)})$) -- ($(3,0)+\row*(-0.5, {0.5*sqrt(3)})$);
        \draw[thick,black] ($\row*(1, 0)$) -- ($(3/2,{3/2*sqrt(3)})+\row*(0.5,{-0.5*sqrt(3)})$);
        \draw[thick,black] ($\row*(1, 0)$) -- ($(0,0)+\row*(0.5,{0.5*sqrt(3)})$);
    }
\draw[line width=0.3mm, black](0,0)--(1.5,2.6)--(3,0)--cycle;
\end{scope}

\begin{scope}[shift={(9,0)}]
    \foreach \row in {0,1,...,4} {
        \draw[thick,black] ($\row*(0.5, {0.5*sqrt(3)})$) -- ($(4,0)+\row*(-0.5, {0.5*sqrt(3)})$);
        \draw[thick,black] ($\row*(1, 0)$) -- ($(4/2,{4/2*sqrt(3)})+\row*(0.5,{-0.5*sqrt(3)})$);
        \draw[thick,black] ($\row*(1, 0)$) -- ($(0,0)+\row*(0.5,{0.5*sqrt(3)})$);
    }
\draw[line width=0.3mm, black](0,0)--(2,3.46)--(4,0)--cycle;
\end{scope}
\end{tikzpicture}        }
    \caption{Matchstick graphs $T_1$, $T_2$, $T_3$, and $T_4$.}
    \label{fig:interioredges}
\end{figure}

In determining the optimal placement of the dominating vertices on $T_{n}$ we first show that each dominating vertex of strength $t$ has a certain \textit{reach}, denoted by $\rho(t)$. The reach of a dominator is defined as the maximum area (in triangles) on the infinite triangular grid graph that receives signal from that dominating vertex. Figure \ref{reach for (3,1)} illustrates the reach of a broadcast vertex of transmission strength $t=3$ (marked by the bold vertex) along with the reception for each vertex (marked in red) within its reach. This shows that for $t=3$ the reach of a dominator is $\rho(3) = 24$. 

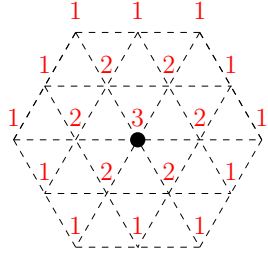
\begin{figure}[H]
    \centering
    \resizebox{1.5in}{!}{%
\begin{tikzpicture}
%%%  define vertices with coordinates
\coordinate (0;0) at (0,0); 
\foreach \c in {1,...,2}{%  
\foreach \i in {0,...,5}{% 
\pgfmathtruncatemacro\j{\c*\i}
\coordinate (\c;\j) at (60*\i:\c);  
} }
\foreach \i in {0,2,...,10}{% 
\pgfmathtruncatemacro\j{mod(\i+2,12)} 
\pgfmathtruncatemacro\k{\i+1}
\coordinate (2;\k) at ($(2;\i)!.5!(2;\j)$) ;}

 %%%%%%%%% draw lines %%%%%%%%
 \foreach \i in {0,...,6}{% 
 \pgfmathtruncatemacro\k{\i}
 \pgfmathtruncatemacro\l{10-\i}
 \draw[dashed] (2;\k)--(2;\l); 
 \pgfmathtruncatemacro\k{6-\i} 
 \pgfmathtruncatemacro\l{mod(8+\i,12)}   
 \draw[dashed] (2;\k)--(2;\l); 
 \pgfmathtruncatemacro\k{9-\i} 
 \pgfmathtruncatemacro\l{mod(9+\i,12)}   
 \draw[dashed] (2;\k)--(2;\l);} 
 
%%%%%%%%% draw points %%%%%%%% 
\fill [black] (0;0) circle (3.5pt) node[above=2pt, red] {3};

%%%%%%%%% some specific points %%%%%%%%%%  
 \foreach \n in {0,1,...,5}{% 
   \draw (1;\n) circle (0pt) node[above=2pt, red] {2};}
 \foreach \n in {0,1,...,11}{% 
   \draw (2;\n) circle (0pt) node[above=2pt, red] {1};}

\end{tikzpicture}}
    \caption{The reach of a dominator of strength $t=3$.}
    \label{reach for (3,1)}
\end{figure}

We begin by counting the number of vertices in $T_n$ and the number of interior edges in $T_n$ (the edges not forming the outer boundary), as these results are used in later sections. 

\begin{lemma}\label{lem:verticesinTn}
Let $n\geq 1$ and $\Delta_{n}=\binom{n+1}{2}$ denote the $n$-th triangular number. Then $\Delta_{n}$ counts the number of vertices of the triangular matchstick graph $T_{n-1}$. 
\end{lemma}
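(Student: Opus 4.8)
The plan is to count the vertices of $T_{n-1}$ directly from the explicit coordinate description of its vertex set, which by definition consists of the points $\left(\frac{1}{2}a-b,\ \frac{\sqrt{3}}{2}a\right)$ with $a,b\in\mathbb{Z}$ and $0\leq b\leq a\leq n-1$. First I would check that the assignment $(a,b)\mapsto\left(\frac{1}{2}a-b,\ \frac{\sqrt{3}}{2}a\right)$ is injective on this index set: the second coordinate recovers $a$ (since $\frac{\sqrt{3}}{2}\neq 0$), and once $a$ is fixed the first coordinate recovers $b$. Hence no two distinct admissible pairs give the same lattice point, and the number of vertices of $T_{n-1}$ equals the number of integer pairs $(a,b)$ satisfying $0\leq b\leq a\leq n-1$.

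Next I would count these pairs by summing over $a$. For each fixed $a$ with $0\leq a\leq n-1$, the admissible values of $b$ are exactly $b\in\{0,1,\dots,a\}$, giving $a+1$ choices; geometrically this is the horizontal row of $a+1$ vertices at height $\frac{\sqrt{3}}{2}a$. Summing the row sizes then yields
\begin{equation*}
\sum_{a=0}^{n-1}(a+1)=\sum_{k=1}^{n}k=\frac{n(n+1)}{2}=\binom{n+1}{2}=\Delta_{n},
\end{equation*}
which is the claimed identity.

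I expect no genuine obstacle here, as the statement reduces to a standard triangular-number sum once injectivity of the coordinatization is noted. The only points that warrant care are the index shift between $T_{n-1}$ and $\Delta_n=\binom{n+1}{2}$ (so that the base row has $n$ vertices, corresponding to $a=n-1$, rather than $n+1$), and the observation that geometrically distinct points are never accidentally identified, both of which are immediate from the given formula. An equally short alternative is induction on $n$: since $T_{n-1}$ is obtained from $T_{n-2}$ by appending a single new base row of $n$ vertices, the vertex count increases by $n$, matching $\binom{n+1}{2}-\binom{n}{2}=n$.
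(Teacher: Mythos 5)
Your proposal is correct and takes essentially the same route as the paper, which simply notes that the number of vertices of $T_{n-1}$ equals $\sum_{i=1}^{n} i = \binom{n+1}{2}$; your row-by-row count over the index pairs $(a,b)$ with $0\leq b\leq a\leq n-1$ is exactly this sum, made rigorous by the (easy but worthwhile) injectivity check of the coordinatization.
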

The proof of Lemma \ref{lem:verticesinTn} follows from the fact that the number of vertices in $T_{n-1}$ is equal to $\sum_{i=1}^{n}i=\binom{n+1}{2}$.

\begin{lemma}\label{lem:interioredgesinTn}
If $n\geq 1$, then the number of interior edges in $T_n$ is  $3\Delta_{n-1}$.
\end{lemma}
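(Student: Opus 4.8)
The plan is to count the total number of edges of $T_n$ and then subtract the edges lying on the outer boundary, since the interior edges are by definition exactly those that do not lie on the boundary of the side-$n$ triangle. Both quantities are elementary to enumerate from the coordinate description of $T_n$, so the work reduces to two independent counts and one subtraction.

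For the total edge count I would exploit the fact that $T_n$ decomposes into three families of unit segments, one for each of the three edge directions of the equilateral triangle, and that the $120^\circ$ rotational symmetry of $T_n$ makes these three families equinumerous. Thus it suffices to count one family and multiply by three. Indexing the rows parallel to the base by $i=0,1,\dots,n$ from the apex downward, the row at level $i$ contains $i+1$ collinear vertices and hence exactly $i$ edges in that direction; summing gives $\sum_{i=0}^{n} i = \binom{n+1}{2} = \Delta_n$ edges per direction, and therefore $3\Delta_n$ edges in total. (It is worth noting that this per-direction count is precisely the triangular number of Lemma~\ref{lem:verticesinTn}.)

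Next I would count the boundary edges. The outer boundary of $T_n$ is the equilateral triangle of side $n$, each of whose three sides is subdivided into $n$ unit edges; the three corner vertices are shared between sides but the edges themselves are distinct, so there are exactly $3n$ boundary edges. Subtracting then yields
\[
3\binom{n+1}{2}-3n=\frac{3n(n+1)}{2}-3n=\frac{3n(n-1)}{2}=3\Delta_{n-1},
\]
as claimed.

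The only point demanding care—and the step I would check most carefully—is the bookkeeping in the total count: one must verify that every edge of $T_n$ belongs to exactly one of the three directional families (no edge is double-counted or omitted) and that the off-by-one in the per-row tally is correct, namely that a row of $i+1$ collinear vertices contributes $i$ edges rather than $i+1$. An alternative that avoids tracking directions altogether is a double-counting argument over the $n^2$ unit triangles: each has three sides, an interior edge is shared by two triangles while a boundary edge belongs to one, so if $E_i$ and $E_b$ denote the interior and boundary edges then $3n^2 = 2E_i + E_b = 2E_i + 3n$, which again gives $E_i = \tfrac{3n(n-1)}{2} = 3\Delta_{n-1}$.
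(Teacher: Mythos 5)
Your proof is correct, but it takes a genuinely different route from the paper's. The paper argues by induction on $n$: it reads the base cases off Figure~\ref{fig:interioredges} (the interior-edge counts $0,3,9,18$ for $T_1,\dots,T_4$) and then observes that in building $T_{n+1}$ by gluing a new row of triangles onto the base of $T_n$, the $n$ old base edges become interior and the new row contributes $2(n+1)-2$ further interior edges, so the count becomes $3\Delta_{n-1}+n+(2(n+1)-2)=3\Delta_n$. You instead compute everything in closed form: total edges minus boundary edges. Your directional count gives $3\Delta_n$ edges in all (each of the three unit directions contributing $\sum_{i=0}^{n} i=\Delta_n$ edges, the families being disjoint and exhaustive), the boundary clearly has $3n$ edges, and the subtraction $3\Delta_n-3n=3\Delta_{n-1}$ finishes it. Both arguments are sound; your route avoids induction entirely and does not lean on pictures for base cases (the paper's base cases are verified only pictorially), and it produces the total edge count of $T_n$ as a reusable byproduct. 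Your alternative double count over the $n^2$ unit triangles, $3n^2=2E_i+E_b$ with $E_b=3n$, is arguably the cleanest of the three arguments, granted the easily checked facts that $T_n$ contains exactly $n^2$ unit triangles and that an edge is interior precisely when it borders two of them. The paper's induction, by contrast, matches the incremental add-a-row viewpoint the authors reuse later (e.g., in the proof of Theorem~\ref{thm:NEWdomnumberfort=3}), which is presumably why they chose it.
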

\begin{proof}
By Figure \ref{fig:interioredges}, we can see that the number of interior edges in the graphs $T_1$, $T_2$, $T_3$ and $T_4$ are $0,3,9,18$, respectively, which are equal to $3\Delta_0$, $3\Delta_1$, 
$3\Delta_2$ and $3\Delta_3$, respectively. Assume by induction that the number of interior edges in $T_n$ is given by $3\Delta_{n-1}$. 
Now notice that in creating $T_{n+1}$ we can add an additional row of triangles to the base of $T_n$. 
This would then turn the $n$ edges that were the base of $T_n$ into interior edges in $T_{n+1}$. 
Also for every edge on the base of $T_{n+1}$ we must have 2 edges with which we create a triangle ($T_1$). 
With the exception of the 2 outer most edges all of these edges are interior edges in $T_{n+1}$. Thus the total number of interior edges in $T_{n+1}$ is given by $3\Delta_{n-1}+n+(2(n+1)-2)=3\Delta_{n}$.
\end{proof}

We now show that the reach of every dominator depends only on the parameter $t$. 

\begin{lemma}\label{lemma:reach for general t}
For $t\geq1$, the \textit{reach} of dominator of strength $t$ is $\rho(t) = 6(t-1)^2$ and contains $3t^2-3t+1$ vertices.
\end{lemma}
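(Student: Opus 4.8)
The plan is to identify the reach with a closed metric ball in $\T$ and then count its vertices and its triangular faces separately. First I would observe that a vertex $u$ receives signal from a dominator $v$ of strength $t$ precisely when $d(u,v)<t$, i.e.\ when $d(u,v)\le t-1$; hence the region receiving signal is exactly the closed ball $B=\{u\in V(\T):d(u,v)\le m\}$ of radius $m:=t-1$ about $v$. The whole statement then reduces to understanding $B$ combinatorially.

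For the vertex count, I would establish the shell sizes: the set of vertices at distance exactly $k$ from $v$ has cardinality $6k$ for each $k\ge 1$ (and $1$ for $k=0$). This follows from the six-fold rotational symmetry of $\T$ about $v$, since the distance-$k$ sphere forms a hexagonal ring with six corner vertices and $k-1$ interior vertices along each of its six sides, for a total of $6k$. Summing over the shells contained in $B$ gives
\begin{equation*}
|B| \;=\; 1 + \sum_{k=1}^{m} 6k \;=\; 1 + 3m(m+1) \;=\; 3t^2 - 3t + 1
\end{equation*}
after substituting $m=t-1$, which is the claimed number of vertices.

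For the triangle count, the key geometric observation is that $B$ is a regular hexagon of side $m$ centered at $v$. I would then decompose this hexagon into the six lattice-aligned equilateral ``pie slices'' $vV_iV_{i+1}$ joining the center $v$ to consecutive corners $V_1,\dots,V_6$; each slice is an equilateral triangle of side $m$, the slices tile $B$ without overlap (meeting only along the radii $vV_i$), and a side-$m$ equilateral triangle subdivides into exactly $m^2$ unit triangles---namely $\binom{m+1}{2}$ upward-pointing and $\binom{m}{2}$ downward-pointing triangles, whose sum is $m^2$. Therefore $\rho(t)=6m^2=6(t-1)^2$.

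The routine parts are the two summations; the step requiring the most care is verifying that the reception region is genuinely a regular hexagon of side $m$ in the lattice, together with the shell-size claim $6k$ that underlies the vertex count. I expect to justify both by an induction on $k$ that adjoins one distance shell at a time, using the local structure of $\T$ (each vertex has exactly six neighbors) and its rotational symmetry about $v$. Once the hexagonal shape is in hand, the pie-slice decomposition makes the triangle count immediate.
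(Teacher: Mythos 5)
Your proof is correct, and for the triangle count it takes essentially the paper's route: both decompose the hexagonal reach into six equilateral triangles of side $t-1$ and use that each such triangle contains $(t-1)^2$ unit triangles (you additionally justify that count via $\binom{m+1}{2}$ upward plus $\binom{m}{2}$ downward faces, a detail the paper takes for granted). Where you genuinely diverge is the vertex count. The paper reuses the same six-piece decomposition and counts $6\binom{t+1}{2}$ vertices across the pieces, then corrects the overcount: it subtracts the $6t$ vertices doubly counted along the shared edges of adjacent pieces and adds back $1$ for the center, giving $6\binom{t+1}{2}-6t+1=3t^2-3t+1$. You instead sum over distance shells, using that the sphere of radius $k$ about a vertex of $T_\infty$ has exactly $6k$ vertices, so the ball of radius $t-1$ has $1+\sum_{k=1}^{t-1}6k=3t^2-3t+1$ vertices. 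Your shell argument is arguably cleaner---it avoids the inclusion--exclusion bookkeeping and isolates the one structural fact ($6k$ vertices per shell) that also explains why the region is hexagonal---whereas the paper's approach has the virtue of running both counts through a single decomposition. One caveat applies equally to both: neither you nor the paper rigorously establishes that the set of vertices within distance $t-1$ really forms a regular hexagon of side $t-1$ in the lattice (the paper asserts it outright; you at least flag it and sketch an induction on shells), so your proposal meets, and in that respect slightly exceeds, the paper's own standard of rigor.
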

\begin{proof}
First observe that $\rho(t)$ is counting the number of triangles within the reach of a dominator with strength $t$. The geometric shape of the reach of a dominator on a triangular grid is hexagonal in shape, and is made up of six triangular matchstick graphs with side length $t-1$. Since each $T_{t-1}$ covers exactly $(t-1)^2$ triangles on the triangular grid and since  six $T_{t-1}$ make up the reach of a strength $t$ dominator, we have shown that $\rho(t)=6(t-1)^2$.

To count the number of vertices in the reach of a dominator of strength $t$ it suffices to note that each $T_{t-1}$ has $\binom{t+1}{2}$ vertices, hence six such graphs contain $6\binom{t+1}{2}$ vertices. However, when we put these matchstick graphs together to create the hexagonal region defining the reach of the dominator, we are double counting the vertices along the 2 edges of every matchstick graph, which contain $t$ vertices. Thus, we must substract $6t$ and we must add 1 so that we account for the center vertex, since this subtraction eliminates that vertex from the count. Therefore, the number of vertices in the reach of a dominator of strength $t$ is given by $6\binom{t+1}{2}-6t+1=3t^2-3t+1.$ 
\end{proof}

\section{Triangular matchstick graphs}\label{sec:finiteGn} 

In this section we study the $(t,r)$ domination numbers of triangular matchstick graphs~$T_n$. Our first result provides a way to bound these domination numbers by varying the parameters $t$, $r$, and $n$.

\begin{prop}\label{prop:bounding}
If $t>r\geq 1$ and  $n\in\mathbb{N}$, then \begin{enumerate}
    \item $\gamma_{t,r}(T_n)\leq \gamma_{t-1,r}(T_n)$,
    \item $\gamma_{t,r}(T_n)\leq \gamma_{t,r+1}(T_n)$, and 
    \item $\gamma_{t,r}(T_{n})\leq \gamma_{t,r}(T_{n+1})$.
\end{enumerate}
\end{prop}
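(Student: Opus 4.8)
The plan is to read all three inequalities as monotonicity statements and prove each by taking a minimum broadcast dominating set that witnesses the right-hand side and producing from it a broadcast dominating set of no larger cardinality witnessing the left-hand side. Parts (1) and (2) require nothing special about $T_n$ and follow by keeping the dominating set fixed. For part (2), if $S$ is a minimum $(t,r+1)$ broadcast dominating set of $T_n$, then under the \emph{same} broadcast every vertex $u$ has $r(u)\ge r+1>r$, so $S$ is already a $(t,r)$ broadcast dominating set and $\gamma_{t,r}(T_n)\le|S|=\gamma_{t,r+1}(T_n)$. For part (1), I would fix a minimum $(t-1,r)$ broadcast dominating set $S$ and compare the reception of a vertex $u$ under strengths $t-1$ and $t$ for this same $S$. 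Since distances are integers, each tower $v$ contributes $\max(t-d(u,v),0)$ under strength $t$ and $\max((t-1)-d(u,v),0)$ under strength $t-1$, and the termwise inequality $\max(t-d,0)\ge\max((t-1)-d,0)$ holds for every $d\ge 0$; summing over $v\in S$ gives reception under strength $t$ at least reception under strength $t-1$, which is at least $r$. Hence $S$ is also a $(t,r)$ broadcast dominating set and $\gamma_{t,r}(T_n)\le\gamma_{t-1,r}(T_n)$.

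Part (3) is the substantive one, and the plan is to push a minimum $(t,r)$ broadcast dominating set $S$ of $T_{n+1}$ down onto $T_n$. Two structural facts drive the argument. First, using the lattice coordinates of Section~\ref{sec:background}, $T_n$ sits inside $T_{n+1}$ as an \emph{isometric} subgraph: it is cut out by the coordinate inequalities $0\le b\le a\le n$, and I would check that such a region is geodesically convex, so that geodesics in $T_{n+1}$ between two vertices of $T_n$ can be taken to stay in $T_n$ and $d_{T_n}=d_{T_{n+1}}$ on $V(T_n)$. Second, there is a retraction $\phi\colon T_{n+1}\to T_n$, that is, a graph homomorphism fixing $V(T_n)$ pointwise: $\phi$ fixes rows $0,\dots,n$ and sends each vertex of the new bottom row $a=n+1$ to an adjacent vertex of $T_n$. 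Setting $S'=\phi(S)\subseteq V(T_n)$ gives $|S'|\le|S|$. To see that $S'$ dominates $T_n$, I would fix $u\in V(T_n)$ and combine the fact that homomorphisms do not increase distance with isometry to obtain $d_{T_n}(u,\phi(v))\le d_{T_{n+1}}(u,v)$ for every $v\in S$; the same termwise inequality $\max(t-d',0)\ge\max(t-d,0)$ for $d'\le d$ then shows that relocating each tower does not decrease its contribution, so the reception of $u$ computed in $T_n$ under $S'$ is at least its reception in $T_{n+1}$ under $S$, which is at least $r$.

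The main obstacle is that $\phi$ cannot be injective, since it must collapse the $n+2$ vertices of row $n+1$ into the smaller bottom of $T_n$; at a vertex of $T_n$ whose signal came from two towers that $\phi$ identifies, the clean bound ``reception in $T_n$ $\ge$ reception in $T_{n+1}$'' can fail. The remedy I would pursue exploits the freedom in defining $\phi$: each vertex $(n+1,b)$ may be sent to $(n,b)$, to $(n,b-1)$, or, away from the corners, to $(n-1,b-1)$, and these choices let one confine the unavoidable identification to a single corner, say $(n+1,n+1)\mapsto(n,n)$, while routing the remaining towers of $S$ injectively into currently unoccupied vertices. Making this precise amounts to producing a system of distinct representatives for $\phi$ restricted to $S$ that also avoids $S\cap V(T_n)$; I expect this to succeed because a \emph{minimum} $S$ cannot be too dense near the boundary, but establishing it, together with dispatching the residual case in which the forced corner identification genuinely merges two towers of $S$ (handled by arguing the corner tower's signal into $T_n$ is already supplied by the tower it merges with), is where the real work lies.
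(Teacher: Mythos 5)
Your proofs of parts (1) and (2) are correct and complete: fixing a minimum broadcast dominating set witnessing the right-hand side and observing that each tower's contribution $\max(t-d(u,v),0)$ can only grow when $t$ increases (part (1)), or that reception at least $r+1$ trivially implies reception at least $r$ (part (2)), is exactly the content that the paper conveys only as informal one-sentence remarks. No issue there.

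Part (3), however, contains a genuine gap, and it is the one you flag yourself. Your strategy (isometric embedding of $T_n$ in $T_{n+1}$ plus a distance-nonincreasing relocation of the towers of $S$ lying in the new row) reduces the claim to finding, for the towers of $S$ in row $n+1$, pairwise distinct targets in $V(T_n)$ that avoid $S\cap V(T_n)$ and do not increase the distance to any vertex of $T_n$. You assert this system of distinct representatives exists ``because a minimum $S$ cannot be too dense near the boundary,'' but that is precisely the statement needing proof, and nothing in your outline supplies it: minimality alone does not obviously exclude, say, towers at both $(n+1,b)$ and $(n+1,b+1)$ while all five of their admissible targets $(n,b-1),(n,b),(n,b+1),(n-1,b-1),(n-1,b)$ also lie in $S$, in which case Hall's condition fails for that pair; ruling out such clusters would need a separate exchange argument. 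The fallback for the corner merge also does not work as described: if $(n+1,n+1)$ and $(n,n)$ are both towers, then since $d(u,(n+1,n+1))=d(u,(n,n))+1$ for every $u\in V(T_n)$, identifying them makes each such $u$ lose the summand $\max(t-d(u,(n,n))-1,0)$ from its reception, and minimality of $S$ in $T_{n+1}$ does not guarantee this loss is affordable, because that corner tower may have been needed exactly to serve vertices of $T_n$, not just row $n+1$. For what it is worth, the paper does not do this work either; its entire proof of (3) is the sentence that a larger graph ``may possibly need more dominators,'' which merely restates the claim. Your analysis correctly exposes that (3) is the only non-routine assertion in the proposition --- domination-type parameters are not monotone under passing to subgraphs in general (delete the center of a star) --- so some structural argument about how $T_n$ sits inside $T_{n+1}$, of the kind you attempt but do not complete, is genuinely required.
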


\begin{proof}
Statement (1) follows from the fact that if the strength of the signal of a dominator is lower, but the needed reception is fixed, then a graph may possibly require more dominators under a smaller signal strength. Statement (2) follows from noting that increasing the reception needed may possibly require more dominators. Statement (3) follows from the fact that a larger triangular graph may possibly need more dominators than a smaller one.
\end{proof}

We now specialize to $r=1$ and begin by providing a lower bound for the $(t,1)$ broadcast domination numbers for $T_n$.

\begin{lemma}\label{lemma:lowerbound} If $t \geq 2$ and $n\geq 1$, then
$ \gamma_{t,1}(T_n)\geq \left \lceil\frac{\Delta_{n+1}}{(3t^2-3t+1)}\right\rceil.$
\end{lemma}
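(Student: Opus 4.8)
The plan is to argue by a straightforward counting (pigeonhole) argument: the total number of vertices of $T_n$ must be covered by the reaches of the towers, and each individual tower can supply the required reception of $r=1$ to only a bounded number of vertices. First I would recall from Lemma \ref{lem:verticesinTn} that the number of vertices of $T_n$ equals $\Delta_{n+1}=\binom{n+2}{2}$. This is the quantity that appears in the numerator of the bound, so fixing this count at the outset is the natural starting point.

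Next I would pin down exactly how many vertices a single strength-$t$ dominator can dominate when $r=1$. Since a tower $v$ transmits signal $t-d(u,v)$ to each vertex $u$ with $d(u,v)<t$, a single tower gives reception at least $1$ to precisely those vertices $u$ with $d(u,v)\leq t-1$, which is exactly the set of vertices lying in the reach of $v$. By Lemma \ref{lemma:reach for general t}, this reach contains $3t^2-3t+1$ vertices. Crucially, because we are in the case $r=1$, a vertex only needs to be in the reach of one tower to be dominated; there is no need to accumulate signal from several towers. Hence a set $S\subseteq V(T_n)$ of dominators is a $(t,1)$ broadcast dominating set if and only if the union of the reaches of the vertices of $S$ contains every vertex of $T_n$.

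With these two facts in place, the counting is immediate. If $S$ is any $(t,1)$ broadcast dominating set of $T_n$, then every one of the $\Delta_{n+1}$ vertices of $T_n$ lies in the reach of some vertex of $S$, while each vertex of $S$ has a reach meeting $T_n$ in at most $3t^2-3t+1$ vertices. Therefore $\Delta_{n+1}\leq |S|\,(3t^2-3t+1)$, so $|S|\geq \Delta_{n+1}/(3t^2-3t+1)$, and since $|S|$ is an integer we may take the ceiling. Applying this to a minimum dominating set yields $\gamma_{t,1}(T_n)\geq \lceil \Delta_{n+1}/(3t^2-3t+1)\rceil$, as claimed.

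The only subtlety, and the step I would state carefully, is the bound of $3t^2-3t+1$ vertices dominated per tower: a tower placed near the boundary of $T_n$ has part of its reach lying outside the finite graph $T_n$, so it dominates possibly \emph{fewer} vertices of $T_n$ than a tower in the interior. This is harmless for a lower bound, since it only decreases the per-tower coverage and thus can only increase the required number of towers, so the inequality $\Delta_{n+1}\leq |S|\,(3t^2-3t+1)$ still holds. Beyond noting this, the argument is a direct application of Lemmas \ref{lem:verticesinTn} and \ref{lemma:reach for general t}, so I do not expect any genuine obstacle.
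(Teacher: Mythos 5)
Your proof is correct and follows essentially the same route as the paper's: count the $\Delta_{n+1}$ vertices of $T_n$, bound the number of vertices any single tower can dominate by the $3t^2-3t+1$ vertices in its reach, and divide, taking the ceiling. Your version is in fact slightly more careful than the paper's, since you explicitly note that towers near the boundary of $T_n$ cover fewer vertices and that this only strengthens the lower bound.
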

\begin{proof}
In order to dominate $T_n$ we need to be able to overlay the reach of the dominators onto $T_n$ such that all vertices receive the minimum needed reception, which in this case is 1. We construct this lower bound by counting the vertices of $T_n$, which is given by $\Delta_{n+1}$,  and then dividing by the number of vertices in the reach of a dominator with transmission strength $t$. Taking the ceiling of this number, as we cannot use a fractional number of dominators, completes the result.
\end{proof}

We now compute some small $(t,1)$ broadcast domination numbers for $T_n$.

\begin{lemma}\label{lem:smalln t=1}
If $1\leq n\leq \lfloor\frac{3(t-1)}{2}\rfloor$, then $\gamma_{t,1}(T_n)=1$. 
\end{lemma}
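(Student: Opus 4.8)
The plan is to establish the upper bound $\gamma_{t,1}(T_n)\le 1$ by placing a single well-chosen dominator; the matching lower bound $\gamma_{t,1}(T_n)\ge 1$ is immediate, since for $n\ge 1$ the graph $T_n$ has vertices and the empty broadcast gives every vertex reception $0<1$. Recall from the reception formula that a single broadcast vertex $v$ of strength $t$ transmits reception $t-d(u,v)$ to each $u$ with $d(u,v)<t$ and nothing otherwise, so $r(u)\ge 1$ for all $u\in T_n$ exactly when $d(u,v)\le t-1$. By Lemma \ref{lemma:reach for general t} the reach of such a $v$ is precisely the set of vertices at graph-distance at most $t-1$ from $v$. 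Thus it suffices to produce a single vertex $v\in T_n$ all of whose co-vertices lie within distance $t-1$, i.e.\ to exhibit a vertex of eccentricity at most $t-1$.

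To make distances explicit I would work in the coordinates $(a,b)$ with $0\le b\le a\le n$ used to define $T_n$, in which the six unit moves are $(a,b)\mapsto(a,b\pm 1),\ (a\pm1,b),\ (a+1,b+1),\ (a-1,b-1)$. A short computation then shows that the triangular-lattice distance between $(a,b)$ and $(a',b')$, writing $\delta=a'-a$ and $\varepsilon=b'-b$, equals $\max(|\delta|,|\varepsilon|)$ when $\delta\varepsilon\ge 0$ and $|\delta|+|\varepsilon|$ when $\delta\varepsilon<0$. The key structural observation is that this distance is the restriction to the lattice of a norm on $\mathbb{R}^2$, so for any fixed $v$ the map $u\mapsto d(u,v)$ is convex; consequently its maximum over the (convex) triangle $T_n$ is attained at one of the three corners $(0,0)$, $(n,0)$, $(n,n)$. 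Here I also use that $T_n$ is an isometric subgraph of $\T$, so that these lattice distances coincide with the graph distances in $T_n$ that actually govern reception; this holds because the triangular region is geodesically convex. Together these facts reduce the task to covering only the three corners.

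With the reduction in hand, set $k=\lceil 2n/3\rceil$ and take the dominator to be $v_0=(k,\,n-k)$, which is a legitimate vertex of $T_n$ since $0\le n-k\le k\le n$. Evaluating the distance formula at the three corners gives $d(v_0,(0,0))=\max(k,n-k)=k$, $d(v_0,(n,n))=\max(n-k,k)=k$, and $d(v_0,(n,0))=2(n-k)$, where the last value is at most $k$ precisely because $k\ge 2n/3$. Hence $v_0$ has eccentricity exactly $k=\lceil 2n/3\rceil$. Finally, because $t-1$ is an integer, $\lceil 2n/3\rceil\le t-1$ if and only if $2n/3\le t-1$, i.e.\ $n\le\lfloor 3(t-1)/2\rfloor$; so throughout the stated range $v_0$ covers all of $T_n$, giving $\gamma_{t,1}(T_n)=1$.

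The only genuinely delicate step is the reduction to corners in the second paragraph: one must verify both that the triangular-lattice metric is the restriction of a norm, so that eccentricity over the triangle is controlled by its extreme points, and that $T_n$ sits isometrically inside $\T$, so that the norm distances are exactly the distances determining reception. Once these two facts are secured, the remainder—deriving the distance formula, evaluating it at the three corners, and the ceiling/floor bookkeeping—is routine.
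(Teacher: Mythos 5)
Your proof is correct, and while it rests on the same underlying idea as the paper's — a single dominator covers $T_n$ exactly when the triangle fits inside the dominator's hexagonal reach of radius $t-1$ — the execution is genuinely different. The paper argues purely geometrically: it fixes the hexagon of side $t-1$, places the top corner of an upward triangle at the midpoint of the hexagon's top edge, and computes the largest fitting side length as $2(t-1)-\lceil\frac{t-1}{2}\rceil=\lfloor\frac{3(t-1)}{2}\rfloor$, with no coordinates, no distance formula, and no explicit dominator. You instead fix the triangle, work in the lattice coordinates $(a,b)$, derive the closed-form metric (your two-case formula, equivalently the hexagonal norm $\max(|\delta|,|\varepsilon|,|\delta-\varepsilon|)$), reduce the eccentricity computation to the three corners via convexity, and exhibit the explicit dominator $v_0=(\lceil 2n/3\rceil,\,n-\lceil 2n/3\rceil)$ with corner distances $k$, $k$, $2(n-k)$, finishing with the equivalence $\lceil 2n/3\rceil\le t-1\iff n\le\lfloor 3(t-1)/2\rfloor$. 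What your route buys is rigor: the two facts you correctly flag as the delicate steps — that the lattice distance comes from a norm (so corner reduction is valid), and that $T_n$ is an isometric subgraph of $T_\infty$ (so reception in $T_n$ is governed by the same distances) — are both true, provable by short case analyses on the lattice moves, and are precisely the points the paper's geometric argument silently assumes; in particular the paper never addresses that distances must be measured inside $T_n$ rather than in the ambient grid. What the paper's route buys is brevity and a clear picture of why the extremal value is $\lfloor\frac{3(t-1)}{2}\rfloor$, which in your version emerges only from the final arithmetic.
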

\begin{proof}
It suffices to show that $\lfloor\frac{3(t-1)}{2}\rfloor$ is the side length of the largest equilateral triangle fitting within a regular hexagon with side length $t-1$. Since we orient triangular graphs upwards, in order to maximize the size of the triangle we note that its top corner should be placed closest to the midpoint of the top side of the hexagon created by the reach of a dominator. Placing the top of the triangle in this way divides the top side of the hexagon into a segment of length $\lceil\frac{t-1}{2}\rceil$ and another of length $t-1-\lceil\frac{t-1}{2}\rceil$. Thus the maximum size of a triangle fitting within the hexagon is given by subtracting $\lceil\frac{t-1}{2}\rceil$ from the diameter of the hexagon, which equals $2(t-1)-\lceil\frac{t-1}{2}\rceil=\lfloor\frac{3(t-1)}{2}\rfloor$, as claimed.
\end{proof}

\begin{figure}[H]
\centering
\resizebox{5in}{!}{%
\begin{tabular}[c]{cc}
   \begin{subfigure}{0.3\textwidth}
   \centering
   \resizebox{1.5in}{!}{
\begin{tikzpicture}
\begin{scope}[shift={(-.5,-.86)}]
    \foreach \row in {0,1} {
        \draw[thick] ($\row*(0.5, {0.5*sqrt(3)})$) -- ($(1,0)+\row*(-0.5, {0.5*sqrt(3)})$);
        \draw[thick] ($\row*(1, 0)$) -- ($(1/2,{1/2*sqrt(3)})+\row*(0.5,{-0.5*sqrt(3)})$);
        \draw[thick] ($\row*(1, 0)$) -- ($(0,0)+\row*(0.5,{0.5*sqrt(3)})$);
    }
\end{scope}
    %%%  define vertices with coordinates
\coordinate (0;0) at (0,0); 
\foreach \c in {1,...,2}{%  
\foreach \i in {0,...,5}{% 
\pgfmathtruncatemacro\j{\c*\i}
\coordinate (\c;\j) at (60*\i:\c);  
} }
\foreach \i in {0,2,...,10}{% 
\pgfmathtruncatemacro\j{mod(\i+2,12)}
\pgfmathtruncatemacro\k{\i+1}
\coordinate (2;\k) at ($(2;\i)!.5!(2;\j)$) ;}

 %%%%%%%%% draw lines %%%%%%%%
 \foreach \i in {0,...,6}{% 
 \pgfmathtruncatemacro\k{\i}
 \pgfmathtruncatemacro\l{10-\i}
 \draw[dashed] (2;\k)--(2;\l); 
 \pgfmathtruncatemacro\k{6-\i} 
 \pgfmathtruncatemacro\l{mod(8+\i,12)}   
 \draw[dashed] (2;\k)--(2;\l); 
 \pgfmathtruncatemacro\k{9-\i} 
 \pgfmathtruncatemacro\l{mod(9+\i,12)}   
 \draw[dashed] (2;\k)--(2;\l);} 
 
%%%%%%%%% draw points %%%%%%%% 
\fill [black] (0;0) circle (3.5pt) node[above=2pt, red] {3};

%%%%%%%%% some specific points %%%%%%%%%%  
 \foreach \n in {0,1,...,5}{% 
   \draw (1;\n) circle (0pt) node[above=2pt, red] {2};}
 \foreach \n in {0,1,...,11}{% 
   \draw (2;\n) circle (0pt) node[above=2pt, red] {1};}
\end{tikzpicture}    }
\caption{}\label{fig:my_label2}
    \end{subfigure}

    \begin{subfigure}{0.3\textwidth}
    \centering
    \resizebox{1.5in}{!}{
\begin{tikzpicture}
\begin{scope}[shift={(-1.5,-.86)}]
    \foreach \row in {0,1,...,2} {
        \draw[thick] ($\row*(0.5, {0.5*sqrt(3)})$) -- ($(2,0)+\row*(-0.5, {0.5*sqrt(3)})$);
        \draw[thick] ($\row*(1, 0)$) -- ($(2/2,{2/2*sqrt(3)})+\row*(0.5,{-0.5*sqrt(3)})$);
        \draw[thick] ($\row*(1, 0)$) -- ($(0,0)+\row*(0.5,{0.5*sqrt(3)})$);
    }
\end{scope}
    %%%  define vertices with coordinates
\coordinate (0;0) at (0,0); 
\foreach \c in {1,...,2}{%  
\foreach \i in {0,...,5}{% 
\pgfmathtruncatemacro\j{\c*\i}
\coordinate (\c;\j) at (60*\i:\c);  
} }
\foreach \i in {0,2,...,10}{% 
\pgfmathtruncatemacro\j{mod(\i+2,12)}
\pgfmathtruncatemacro\k{\i+1}
\coordinate (2;\k) at ($(2;\i)!.5!(2;\j)$) ;}

 %%%%%%%%% draw lines %%%%%%%%
 \foreach \i in {0,...,6}{% 
 \pgfmathtruncatemacro\k{\i}
 \pgfmathtruncatemacro\l{10-\i}
 \draw[dashed] (2;\k)--(2;\l); 
 \pgfmathtruncatemacro\k{6-\i} 
 \pgfmathtruncatemacro\l{mod(8+\i,12)}   
 \draw[dashed] (2;\k)--(2;\l); 
 \pgfmathtruncatemacro\k{9-\i} 
 \pgfmathtruncatemacro\l{mod(9+\i,12)}   
 \draw[dashed] (2;\k)--(2;\l);} 
 
%%%%%%%%% draw points %%%%%%%% 
\fill [black] (0;0) circle (3.5pt) node[above=2pt, red] {3};

%%%%%%%%% some specific points %%%%%%%%%%  
 \foreach \n in {0,1,...,5}{% 
   \draw (1;\n) circle (0pt) node[above=2pt, red] {2};}
 \foreach \n in {0,1,...,11}{% 
   \draw (2;\n) circle (0pt) node[above=2pt, red] {1};}

\end{tikzpicture}
}
        \caption{}
    \end{subfigure}
    
    \begin{subfigure}{0.3\textwidth}
    \centering
    \resizebox{1.5in}{!}{
\begin{tikzpicture}
\begin{scope}[shift={(-1.5,-.86)}]
    \foreach \row in {0, 1,...,3} {
        \draw[thick] ($\row*(0.5, {0.5*sqrt(3)})$) -- ($(3,0)+\row*(-0.5, {0.5*sqrt(3)})$);
        \draw[thick] ($\row*(1, 0)$) -- ($(3/2,{3/2*sqrt(3)})+\row*(0.5,{-0.5*sqrt(3)})$);
        \draw[thick] ($\row*(1, 0)$) -- ($(0,0)+\row*(0.5,{0.5*sqrt(3)})$);
    }
\end{scope}
    %%%  define vertices with coordinates
\coordinate (0;0) at (0,0); 
\foreach \c in {1,...,2}{%  
\foreach \i in {0,...,5}{% 
\pgfmathtruncatemacro\j{\c*\i}
\coordinate (\c;\j) at (60*\i:\c);  
} }
\foreach \i in {0,2,...,10}{% 
\pgfmathtruncatemacro\j{mod(\i+2,12)}
\pgfmathtruncatemacro\k{\i+1}
\coordinate (2;\k) at ($(2;\i)!.5!(2;\j)$) ;}

 %%%%%%%%% draw lines %%%%%%%%
 \foreach \i in {0,...,6}{% 
 \pgfmathtruncatemacro\k{\i}
 \pgfmathtruncatemacro\l{10-\i}
 \draw[dashed] (2;\k)--(2;\l); 
 \pgfmathtruncatemacro\k{6-\i} 
 \pgfmathtruncatemacro\l{mod(8+\i,12)}   
 \draw[dashed] (2;\k)--(2;\l); 
 \pgfmathtruncatemacro\k{9-\i} 
 \pgfmathtruncatemacro\l{mod(9+\i,12)}   
 \draw[dashed] (2;\k)--(2;\l);} 
 
%%%%%%%%% draw points %%%%%%%% 
\fill [black] (0;0) circle (3.5pt) node[above=2pt, red] {3};

%%%%%%%%% some specific points %%%%%%%%%%  
 \foreach \n in {0,1,...,5}{% 
   \draw (1;\n) circle (0pt) node[above=2pt, red] {2};}
 \foreach \n in {0,1,...,11}{% 
   \draw (2;\n) circle (0pt) node[above=2pt, red] {1};}

\end{tikzpicture}    
}
\caption{}
    \end{subfigure}
    \end{tabular}
    }
\caption{$T_1$, $T_2$, and $T_3$ lying within the reach of a dominator with $t=3$.}
    \label{fig:reach of a dominator with t=3}
\end{figure}
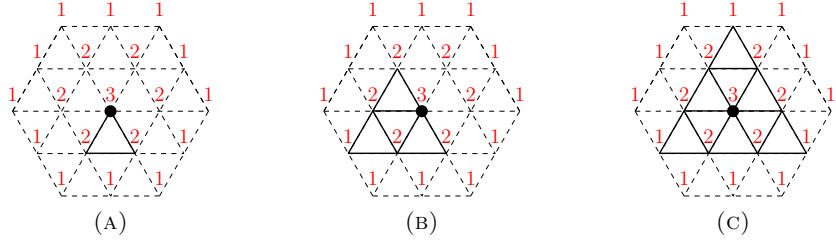
Figure~\ref{fig:reach of a dominator with t=3} provides an example of the reach of a dominator when $t=3$, on which we overlay the graphs $T_1, T_2,$ and $T_3$. Note that placing $T_4$ within the reach of a dominator with $t=3$,  results in some of the vertices of $T_4$ to fall outside of the reach of the dominator. Thus $\gamma_{3,1}(T_4)>1$. This is a case we consider in the following result.

\begin{lemma}\label{lem:medium n t=1}
If $\lfloor\frac{3(t-1)}{2}\rfloor<n\leq 2(t-1)$, then $\gamma_{t,1}(T_n)=2$.
\end{lemma}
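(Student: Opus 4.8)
The plan is to establish the two inequalities $\gamma_{t,1}(T_n)\ge 2$ and $\gamma_{t,1}(T_n)\le 2$ separately. For the lower bound I would argue by contraposition from Lemma~\ref{lem:smalln t=1}: its proof shows that $\lfloor\frac{3(t-1)}{2}\rfloor$ is precisely the largest side length of an upward-oriented equilateral triangle that fits inside the hexagonal reach of a single strength-$t$ dominator. Hence one dominator dominates $T_n$ if and only if $n\le\lfloor\frac{3(t-1)}{2}\rfloor$. Since we assume $n>\lfloor\frac{3(t-1)}{2}\rfloor$, no placement of a single dominator covers every vertex of $T_n$, as some vertex must lie at distance at least $t$ from the tower and so receives reception $0<1$. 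Therefore $\gamma_{t,1}(T_n)\ge 2$.

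For the upper bound, by Proposition~\ref{prop:bounding}(3) it suffices to dominate the largest graph in the range, namely $T_{2(t-1)}$, with two dominators; monotonicity then gives $\gamma_{t,1}(T_n)\le\gamma_{t,1}(T_{2(t-1)})\le 2$ for every $n\le 2(t-1)$. Writing $k=t-1$, I would coordinatize $T_{2k}$ by pairs $(a,b)$ with $0\le b\le a\le 2k$, where $a$ records the row measured downward from the apex. From the neighbor relations of the triangular lattice one checks that the graph distance between $(a_1,b_1)$ and $(a_2,b_2)$ equals $\max(|\delta_a|,|\delta_b|)$ when $\delta_a\delta_b\ge 0$ and $|\delta_a|+|\delta_b|$ otherwise, where $\delta_a=a_2-a_1$ and $\delta_b=b_2-b_1$. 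I then place one dominator at the apex $(0,0)$ and one at the midpoint $(2k,k)$ of the bottom edge. The distance formula gives $d\big((0,0),(a,b)\big)=a$, so the apex dominator covers precisely rows $0$ through $k$; a short case split on the sign of $b-k$ shows that every vertex $(a,b)$ with $k+1\le a\le 2k$ lies within distance $k=t-1$ of $(2k,k)$. Thus the two reaches together cover all of $T_{2k}$, and since each covered vertex is within distance $t-1<t$ of a dominator it receives reception at least $t-(t-1)=1$.

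The main technical point to get right is this upper-bound coverage verification. Beyond the routine case analysis with the distance formula, I must confirm that the shortest paths realizing these distances stay inside $T_{2k}$, so that the finite-graph distances coincide with the infinite-lattice distances used above; this is exactly what makes it legitimate to reason with the hexagonal reaches of Lemma~\ref{lemma:reach for general t}. Each relevant path can be taken to use only the monotone moves $(a,b)\mapsto(a\pm 1,b\pm 1)$ and $(a,b)\mapsto(a,b\mp 1)$, and a direct check shows the intermediate vertices always satisfy $0\le b\le a\le 2k$, so the construction is valid as stated.
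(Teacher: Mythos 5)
Your proof is correct, and the underlying construction coincides with the paper's: the paper also uses two dominators, one placed so that the bottom edge of $T_n$ lies along the main diagonal of its hexagonal reach (equivalently, at the bottom-edge midpoint) and one more covering the leftover top portion, with the lower bound $\gamma_{t,1}(T_n)\geq 2$ left implicit in the maximality statement of Lemma~\ref{lem:smalln t=1}. Where you differ is in the verification. The paper's argument is pictorial (Figure~\ref{fig:PicNeeded3}): it asserts, without computation, that one additional dominator handles the top portion. You instead (i) make the lower bound explicit, (ii) reduce to the single extremal case $n=2(t-1)$ via the monotonicity of Proposition~\ref{prop:bounding}(3), and (iii) verify coverage with the axial-coordinate distance formula, including the check --- absent from the paper --- that the realizing geodesics stay inside $T_{2k}$, so that distances in the finite subgraph agree with distances in $\T$. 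That last point is a genuine issue the paper glosses over, since $(t,r)$ domination of $T_n$ is defined with respect to distances in $T_n$ itself, and your treatment closes it. One small slip: your list of ``monotone moves'' omits the row moves $(a,b)\mapsto(a\pm 1,b)$, which are lattice edges and are needed, for example, for the geodesic from the apex $(0,0)$ to $(a,0)$; with these included, your path-containment check goes through exactly as described.
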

\begin{proof}
Suppose that $\lfloor\frac{3(t-1)}{2}\rfloor<n\leq 2(t-1)$. First observe that we can  place the bottom edge of $T_n$ along the main diagonal  on the hexagononal reach of a dominator as illustrated in Figure~\ref{fig:PicNeeded3}. So all vertices within $T_n$ inside of the hexagonal reach of this dominato are receiving reception 1. Then we need only one more dominator to dominate the top portion of $T_n$ outside of reach of the first dominator. Thus $\gamma_{t,1}(T_n)=2$.
\end{proof}

\begin{figure}[h]
\centering
\begin{subfigure}[t]{.45\textwidth}
\centering
\resizebox{!}{1.5in}{%
\resizebox{2.5cm}{!}{%
\begin{tikzpicture}
\begin{scope}[shift={(2,0)}]
\draw[thick] (0:\R) \foreach \x in {60,120,...,359} {
                -- (\x:\R)
            }-- cycle (90:\R);
\fill (0:0) circle (3pt);
\end{scope}

\draw[line width=.6mm, red] (0,0)--(2,3.46)--(4,0)--cycle;

\begin{scope}[shift={(2.25,3.89)}]
\draw[thick] (0:\R) \foreach \x in {60,120,...,359} {
                -- (\x:\R)
            }-- cycle (90:\R);
\fill (0:0) circle (3pt);
\end{scope}
\end{tikzpicture}
}}
\caption{$T_n$ lies within the reach of 2 dominators, when $\lfloor\frac{3(t-1)}{2}\rfloor<n\leq 2(t-1)$.}\label{fig:PicNeeded3}
\end{subfigure}
\hspace{.2in}
\begin{subfigure}[t]{.45\textwidth}
\centering
\resizebox{!}{1.5in}{%
\resizebox{4cm}{!}{%
\begin{tikzpicture}
\begin{scope}[shift={(2,0)}]
\draw[thick] (0:\R) \foreach \x in {60,120,...,359} {
                -- (\x:\R)
            }-- cycle (90:\R);
\fill (0:0) circle (3pt);
\end{scope}

\begin{scope}[shift={(2.25,3.89)}]
\draw[thick] (0:\R) \foreach \x in {60,120,...,359} {
                -- (\x:\R)
            }-- cycle (90:\R);
\fill (0:0) circle (3pt);
\end{scope}

\begin{scope}[shift={(5.5,1.73)}]
\draw[thick] (0:\R) \foreach \x in {60,120,...,359} {
                -- (\x:\R)
            }-- cycle (90:\R);
\fill (0:0) circle (3pt);
\end{scope}

\draw[line width=.6mm, red] (0,0)--(2,3.46)--(4,0)--cycle;
\draw[line width=.6mm, red] (2,3.46)--(2.25,3.89)--(4.5,0)--(4,0);
\node[red, rotate=60] at (2.75,4.76) {\bf\LARGE$\ldots$};
\draw[line width=.6mm, red] (3.25,5.62)--(6.5,0);
\node[red, rotate=0] at (5.25,.5) {{\bf\LARGE $\ldots$}};
\end{tikzpicture}
}}
\caption{$T_{n}$ lies within the reach of 3 dominators, when $2(t-1)<n\leq 3t-2$.}
    \label{fig:3 dominators needed}
    \end{subfigure}
\caption{Figures used in Lemmas \ref{lem:medium n t=1} and \ref{lem:large n t=1}.}
\end{figure}
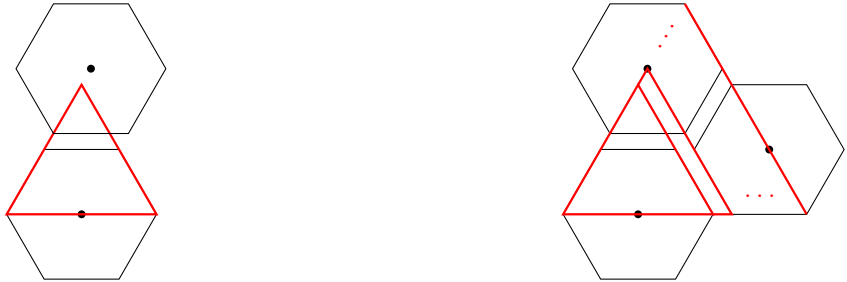

\begin{lemma}\label{lem:large n t=1}
If $2(t-1)<n\leq 3t-2$, then $\gamma_{t,1}(T_n)=3$.
\end{lemma}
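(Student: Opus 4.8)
The plan is to prove the two inequalities $\gamma_{t,1}(T_n)\geq 3$ and $\gamma_{t,1}(T_n)\leq 3$ separately. The lower bound will come from a pigeonhole argument applied to the three corners of $T_n$, and the upper bound from the explicit three-dominator placement indicated in Figure~\ref{fig:3 dominators needed}.

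For the lower bound I would first record that the three corner vertices of $T_n$ are pairwise at graph distance exactly $n$: a shortest path between two corners runs along a side of the triangle, which has length $n$, and the triangular structure admits no shorter route. Next I would note that a vertex $u$ can have reception $r(u)\geq 1$ only if some broadcast vertex $v$ satisfies $d(u,v)<t$, i.e. $d(u,v)\leq t-1$, since each transmission equals $t-d(u,v)$ and a positive total reception forces at least one positive summand. Now suppose toward a contradiction that a broadcast dominating set $S$ has $|S|\leq 2$. Each of the three corners must lie within distance $t-1$ of some element of $S$, so by pigeonhole two distinct corners $c_i,c_j$ lie within distance $t-1$ of a common dominator $v$. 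The triangle inequality then gives $n=d(c_i,c_j)\leq d(c_i,v)+d(v,c_j)\leq 2(t-1)$, contradicting the hypothesis $n>2(t-1)$. Hence $|S|\geq 3$, establishing $\gamma_{t,1}(T_n)\geq 3$.

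For the upper bound it suffices, by the monotonicity $\gamma_{t,1}(T_n)\leq\gamma_{t,1}(T_{n+1})$ of Proposition~\ref{prop:bounding}(3), to treat the extremal case $n=3t-2$; every smaller $n$ in the range then inherits the bound. I would place three dominators as in Figure~\ref{fig:3 dominators needed}, with two reaches covering the lower portion of the triangle near the two base corners and the third covering the upper portion near the apex. By Lemma~\ref{lemma:reach for general t} each reach is a regular hexagon built from six copies of $T_{t-1}$, with center-to-corner distance $t-1$, so the remaining task is to verify that the union of the three hexagons contains every vertex of $T_{3t-2}$ and hence that each such vertex receives reception at least $1$. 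I would carry this out by partitioning $T_{3t-2}$ into three sub-regions, each contained in one of the three reaches, and checking that $n=3t-2$ is precisely the largest side length for which this covering still closes up.

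The lower bound is the clean part. The main obstacle is the coverage verification in the upper bound: pinning down the three hexagon centers in lattice coordinates and confirming that the three reaches leave no vertex of $T_{3t-2}$ uncovered at the boundary value $n=3t-2$, where the covering is tight and any extra row would force a fourth dominator.
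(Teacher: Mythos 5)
Your proof is correct and follows the same overall route as the paper: an explicit three-dominator arrangement (Figure~\ref{fig:3 dominators needed}) for the upper bound, and a diameter-of-the-reach argument for the lower bound. The one noteworthy difference is that your lower bound is more rigorous than the paper's: the paper merely remarks that since the reach of a dominator has diameter $2(t-1)<n$ and ``the triangle has 3 sides,'' two dominators cannot suffice, whereas your pigeonhole on the three corners, which are pairwise at graph distance exactly $n$, turns this intuition into an airtight argument, and it simultaneously excludes $|S|=1$ (the paper's reduction to showing $\gamma_{t,1}(T_n)\neq 2$ tacitly needs $\gamma_{t,1}(T_n)\geq 2$, which would have to come from Proposition~\ref{prop:bounding}(3) together with Lemma~\ref{lem:medium n t=1}). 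On the upper bound you are at the same level of detail as the paper: both of you ultimately lean on the figure, and the coverage fact you flag as the remaining obstacle does hold --- placing one dominator on each of the three sides works, and at the extremal value $n=3t-2$ the covering is tight (for $t=3$ the three reaches partition the $36$ vertices of $T_7$ into three sets of $12$). Your use of Proposition~\ref{prop:bounding}(3) to reduce the whole range $2(t-1)<n\leq 3t-2$ to the single case $n=3t-2$ is a clean addition that the paper does not make explicit.
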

\begin{proof}
Arranging 3 dominators as in Figure \ref{fig:3 dominators needed} shows that $\gamma_{t,1}(T_n)\leq 3$ whenever $2(t-1)<n\leq 3t-2$. It now suffices to show that $\gamma_{t,1}(T_n)\neq 2$. To do so we argue based on the the fact that the diameter of the reach of a dominator is $2(t-1)$. Since $n>2(t-1)$ and the triangle has 3 sides we cannot dominate each side of $T_n$   with only $2$ dominators.
\end{proof}

Figure \ref{fig:31dominatingT5T6T7} illustrates the result of Lemma \ref{lem:large n t=1}, when $t=3$. Note that whenever a dominator is placed outside of $T_n$ it suffices to push that dominating vertex to the left so that it lies on~$T_n$. However, we place some dominators outside of $T_n$ to simplify some of the figures. We also highlight the reach of a dominator in purple.

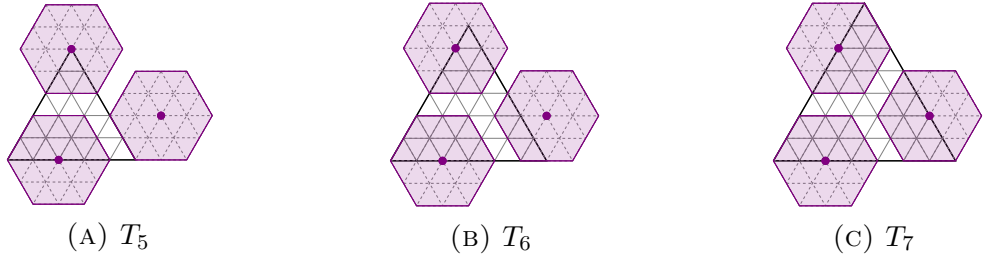
\begin{figure}[H]
     
\begin{tabular}[c]{cc} 
\begin{subfigure}{.3\textwidth}
\centering
    \resizebox{.6\textwidth}{!}{
    
\tikzset{
    hexagon/.pic={%%%  define vertices with coordinates
\coordinate (0;0) at (0,0); 
\foreach \c in {1,...,2}{%  
\foreach \i in {0,...,5}{% 
\pgfmathtruncatemacro\j{\c*\i}
\coordinate (\c;\j) at (60*\i:\c);  
} }
\foreach \i in {0,2,...,10}{% 
\pgfmathtruncatemacro\j{mod(\i+2,12)} 
\pgfmathtruncatemacro\k{\i+1}
\coordinate (2;\k) at ($(2;\i)!.5!(2;\j)$) ;}

 %%%%%%%%% draw lines %%%%%%%%
 \foreach \i in {0,...,6}{% 
 \pgfmathtruncatemacro\k{\i}
 \pgfmathtruncatemacro\l{10-\i}
 \draw[gray, dashed] (2;\k)--(2;\l);
 \pgfmathtruncatemacro\k{6-\i} 
 \pgfmathtruncatemacro\l{mod(8+\i,12)}   
 \draw[gray, dashed] (2;\k)--(2;\l); 
 \pgfmathtruncatemacro\k{9-\i} 
 \pgfmathtruncatemacro\l{mod(9+\i,12)}   
 \draw[gray, dashed] (2;\k)--(2;\l);} 
 
%%%%%%%%% draw points %%%%%%%% 
\fill [violet] (0;0) circle (4.5pt);

\draw[thick, violet] (0:\R) \foreach \x in {60,120,...,359} {
                -- (\x:\R)
            }-- cycle (90:\R);
            
            \fill [violet] (0;0) circle (4.5pt);

\filldraw[fill=violet!80!violet,opacity=0.15] (0:\R) \foreach \x in {60,120,...,359} {
                -- (\x:\R)
            }-- cycle (90:\R);
\draw[thick, violet] (0:\R) \foreach \x in {60,120,...,359} {
                -- (\x:\R)
            }-- cycle (90:\R);
            }}
            
\begin{tikzpicture}
    \foreach \row in {0, 1,...,5} {
        \draw[gray, thick] ($\row*(0.5, {0.5*sqrt(3)})$) -- ($(5,0)+\row*(-0.5, {0.5*sqrt(3)})$);
        \draw[gray, thick] ($\row*(1, 0)$) -- ($(5/2,{5/2*sqrt(3)})+\row*(0.5,{-0.5*sqrt(3)})$);
        \draw[gray, thick] ($\row*(1, 0)$) -- ($(0,0)+\row*(0.5,{0.5*sqrt(3)})$);
    }
    \draw[black, ultra thick](0,0)--(5,0)--(2.5, 4.33013)--(0,0);
    
\pic at (2,0)  {hexagon};
%\pic at (9,0)  {hexagon};
\pic at (6,1.73) {hexagon};
%\pic at (5.5,6.06) {hexagon};
\pic at (2.5, 4.33) {hexagon};
\end{tikzpicture}
     }
    \caption{$T_5$}
    \label{fig:T_5arranged}
    \end{subfigure}
    \hfill\begin{subfigure}{.3\textwidth}
    \centering
    \resizebox{.6\textwidth}{!}{
    
\tikzset{
    hexagon/.pic={%%%  define vertices with coordinates
\coordinate (0;0) at (0,0); 
\foreach \c in {1,...,2}{%  
\foreach \i in {0,...,5}{% 
\pgfmathtruncatemacro\j{\c*\i}
\coordinate (\c;\j) at (60*\i:\c);  
} }
\foreach \i in {0,2,...,10}{% 
\pgfmathtruncatemacro\j{mod(\i+2,12)} 
\pgfmathtruncatemacro\k{\i+1}
\coordinate (2;\k) at ($(2;\i)!.5!(2;\j)$) ;}

 %%%%%%%%% draw lines %%%%%%%%
 \foreach \i in {0,...,6}{% 
 \pgfmathtruncatemacro\k{\i}
 \pgfmathtruncatemacro\l{10-\i}
 \draw[gray, dashed] (2;\k)--(2;\l);
 \pgfmathtruncatemacro\k{6-\i} 
 \pgfmathtruncatemacro\l{mod(8+\i,12)}   
 \draw[gray, dashed] (2;\k)--(2;\l); 
 \pgfmathtruncatemacro\k{9-\i} 
 \pgfmathtruncatemacro\l{mod(9+\i,12)}   
 \draw[gray, dashed] (2;\k)--(2;\l);} 
 
%%%%%%%%% draw points %%%%%%%% 
\fill [violet] (0;0) circle (4.5pt);

\draw[thick, violet] (0:\R) \foreach \x in {60,120,...,359} {
                -- (\x:\R)
            }-- cycle (90:\R);

            \fill [violet] (0;0) circle (4.5pt);

\filldraw[fill=violet!80!violet,opacity=0.15] (0:\R) \foreach \x in {60,120,...,359} {
                -- (\x:\R)
            }-- cycle (90:\R);
\draw[thick, violet] (0:\R) \foreach \x in {60,120,...,359} {
                -- (\x:\R)
            }-- cycle (90:\R);
            }}

\begin{tikzpicture}
    \foreach \row in {0, 1,...,6} {
        \draw[gray, thick] ($\row*(0.5, {0.5*sqrt(3)})$) -- ($(6,0)+\row*(-0.5, {0.5*sqrt(3)})$);
        \draw[gray, thick] ($\row*(1, 0)$) -- ($(6/2,{6/2*sqrt(3)})+\row*(0.5,{-0.5*sqrt(3)})$);
        \draw[gray, thick] ($\row*(1, 0)$) -- ($(0,0)+\row*(0.5,{0.5*sqrt(3)})$);
    }
    \draw[black, ultra thick](0,0)--(6,0)--(3., 5.19615)--(0,0);
\pic at (2,0)  {hexagon};
%\pic at (9,0)  {hexagon};
\pic at (6,1.73) {hexagon};
%\pic at (5.5,6.06) {hexagon};
\pic at (2.5, 4.33) {hexagon};
\end{tikzpicture}}\caption{$T_6$}
    \label{fig:T_6}
    \end{subfigure}
    \begin{subfigure}{.3\textwidth}
    \centering
    \resizebox{.6\textwidth}{!}{
    
\tikzset{
    hexagon/.pic={%%%  define vertices with coordinates
\coordinate (0;0) at (0,0); 
\foreach \c in {1,...,2}{%  
\foreach \i in {0,...,5}{% 
\pgfmathtruncatemacro\j{\c*\i}
\coordinate (\c;\j) at (60*\i:\c);  
} }
\foreach \i in {0,2,...,10}{% 
\pgfmathtruncatemacro\j{mod(\i+2,12)} 
\pgfmathtruncatemacro\k{\i+1}
\coordinate (2;\k) at ($(2;\i)!.5!(2;\j)$) ;}

 %%%%%%%%% draw lines %%%%%%%%
 \foreach \i in {0,...,6}{% 
 \pgfmathtruncatemacro\k{\i}
 \pgfmathtruncatemacro\l{10-\i}
 \draw[gray, dashed] (2;\k)--(2;\l);
 \pgfmathtruncatemacro\k{6-\i} 
 \pgfmathtruncatemacro\l{mod(8+\i,12)}   
 \draw[gray, dashed] (2;\k)--(2;\l); 
 \pgfmathtruncatemacro\k{9-\i} 
 \pgfmathtruncatemacro\l{mod(9+\i,12)}   
 \draw[gray, dashed] (2;\k)--(2;\l);} 
 
%%%%%%%%% draw points %%%%%%%% 
\fill [violet] (0;0) circle (4.5pt);

\draw[thick, violet] (0:\R) \foreach \x in {60,120,...,359} {
                -- (\x:\R)
            }-- cycle (90:\R);
                        
            \fill [violet] (0;0) circle (4.5pt);

\filldraw[fill=violet!80!violet,opacity=0.15] (0:\R) \foreach \x in {60,120,...,359} {
                -- (\x:\R)
            }-- cycle (90:\R);
\draw[thick, violet] (0:\R) \foreach \x in {60,120,...,359} {
                -- (\x:\R)
            }-- cycle (90:\R);
            }}
\begin{tikzpicture}
    \foreach \row in {0, 1,...,7} {
        \draw[gray, thick] ($\row*(0.5, {0.5*sqrt(3)})$) -- ($(7,0)+\row*(-0.5, {0.5*sqrt(3)})$);
        \draw[gray, thick] ($\row*(1, 0)$) -- ($(7/2,{7/2*sqrt(3)})+\row*(0.5,{-0.5*sqrt(3)})$);
        \draw[gray, thick] ($\row*(1, 0)$) -- ($(0,0)+\row*(0.5,{0.5*sqrt(3)})$);
    }
    \draw[black, ultra thick](0,0)--(7,0)--(3.5, 6.06218)--(0,0);
\pic at (2,0)  {hexagon};
%\pic at (9,0)  {hexagon};
\pic at (6,1.73) {hexagon};
%\pic at (5.5,6.06) {hexagon};
\pic at (2.5, 4.33) {hexagon};
\end{tikzpicture}}
    \caption{$T_7$}
    \label{fig:T_7}
    \end{subfigure}
    \end{tabular}
    \caption{Examples of $(3,1)$ broadcast dominating sets for $T_5, T_6, \text{and}\ T_7$.}\label{fig:31dominatingT5T6T7}
\end{figure}

We now provide an upper bound for the $(t,1)$ broadcast domination number of $T_n$ when $t$ is odd.
\begin{theorem}\label{thm:NEWdomnumberfornodd}
If $\ell> 1$ is an integer and $t\geq 2$ is odd, then $\gamma_{t,1}(T_{\ell(2(t-1)-\lfloor\frac{t}{2}\rfloor)})\leq\Delta_{\ell}$.
\end{theorem}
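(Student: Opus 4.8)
The plan is to give a direct construction that places exactly $\Delta_\ell$ towers and then check it dominates. The first step is to unwind the hypothesis. Since $t$ is odd, $\lfloor t/2\rfloor=\tfrac{t-1}{2}$, so the subscript collapses to $n=\ell\bigl(2(t-1)-\tfrac{t-1}{2}\bigr)=\ell m$, where $m:=\tfrac{3(t-1)}{2}$ is an \emph{integer} and equals $\lfloor\tfrac{3(t-1)}{2}\rfloor$, the exact quantity appearing in Lemma~\ref{lem:smalln t=1}. Hence a single strength-$t$ tower $(t,1)$-dominates any copy of $T_m$. The oddness of $t$ is precisely what keeps both $m$ and the tower positions below on lattice points.

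Next I would subdivide $T_n=T_{\ell m}$ into $\ell^2$ congruent side-$m$ triangles; counting by rows gives $\Delta_\ell=\binom{\ell+1}{2}$ upward-pointing copies of $T_m$ and $\binom{\ell}{2}$ downward-pointing copies. I place one tower in each upward subtriangle: in lattice coordinates where the two basis directions meet at $120^\circ$ (so graph distance is $d((a,b))=\max(|a|,|b|,|a+b|)$), an upward $T_m$ with corners $(0,0),(m,0),(0,m)$ receives its tower at $(\tfrac{m}{3},\tfrac{m}{3})=(\tfrac{t-1}{2},\tfrac{t-1}{2})$, a lattice point exactly because $t$ is odd. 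This uses $\Delta_\ell$ towers, matching the bound. Each corner then lies at distance $\max(\tfrac{m}{3},\tfrac{m}{3},\tfrac{2m}{3})=\tfrac{2m}{3}=t-1$ from its tower and every interior vertex is closer, so by Lemma~\ref{lem:smalln t=1} each upward subtriangle is $(t,1)$-dominated by its own tower.

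The heart of the argument is that the downward subtriangles also receive reception at least $1$. A downward $T_m$ shares its three edges with three upward subtriangles, all interior to $T_n$ (a downward triangle never shares an edge with the outer boundary), so it is flanked by three of the placed towers. By translation invariance it suffices to check one downward triangle $D$ with corners $(m,0),(0,m),(m,m)$ against $c_1=(\tfrac m3,\tfrac m3)$, $c_2=(\tfrac{4m}{3},\tfrac m3)$, $c_3=(\tfrac m3,\tfrac{4m}{3})$. I would partition $D$ according to which flanking tower is nearest and verify in each region that the relevant tower is within distance $t-1=\tfrac{2m}{3}$; the extremal vertex is the centroid $(\tfrac{2m}{3},\tfrac{2m}{3})$, which sits at distance exactly $\tfrac{2m}{3}$ from all three towers and so still receives reception $1$. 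Combining the two coverage claims, the $\Delta_\ell$ towers form a $(t,1)$ broadcast dominating set, giving $\gamma_{t,1}(T_n)\le\Delta_\ell$.

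I expect the main obstacle to be this last verification: confirming, for \emph{general} odd $t$, that the three radius-$(t-1)$ hexagonal reaches cover every vertex of $D$, i.e.\ that the worst vertices (the centroid and the corners of $D$) sit at distance exactly $t-1$ and none exceeds it. The case $t=3$, $m=3$ can be checked by hand and already reveals the clean three-way partition of $D$. An alternative packaging is an induction on $\ell$ whose base case $\ell=2$ is supplied directly by Lemma~\ref{lem:large n t=1} (there $2(t-1)<2m=3(t-1)\le 3t-2$, so $\gamma_{t,1}(T_{2m})=3=\Delta_2$) and whose step appends a trapezoidal strip of $m$ rows together with $\ell+1$ new towers; either route reduces to the same hexagon-overlap computation.
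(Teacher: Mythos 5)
Your construction coincides with the paper's: the paper's triangular arrangement of $\Delta_\ell$ hexagonal reaches (Figure~\ref{fig:upperbound}) places the dominators on a triangular sublattice of spacing $\tfrac{3(t-1)}{2}$ whose bottom row sits $\lfloor t/2\rfloor=\tfrac{t-1}{2}$ rows above the base of $T_n$, and these are exactly your towers at the centroids of the upward subtriangles of side $\tfrac{3(t-1)}{2}$. The only real difference is expository: the paper certifies coverage by pointing at the figure, whereas you verify it by explicit hex-distance computations (each upward subtriangle covered by its own tower as in Lemma~\ref{lem:smalln t=1}, each downward one by its three flanking towers, with the centroid as the tight vertex at distance exactly $t-1$), and that verification is correct, so your proposal is a sound --- indeed more rigorous --- rendering of the same proof.
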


\begin{proof}
Given $t$ odd we can always arrange $\Delta_\ell$ hexagon with a common point of intersection using the midpoints of three edges, as depicted in Figure \ref{fig:upperbound}. We note that the largest $T_n$ that fits within this arrangement of dominators has its base $\lfloor\frac{t}{2}\rfloor$ units below the dominators on the bottom row. Thus, the base length is given by $\ell(2(t-1)-\lfloor\frac{t}{2}\rfloor)$. Thus $\gamma_{t,1}(T_{\ell(2(t-1)-\lfloor\frac{t}{2}\rfloor)})\leq~\Delta_{\ell}$.
\end{proof}

\begin{figure}[H]
    \centering
     \resizebox{3in}{!}{%
 \input{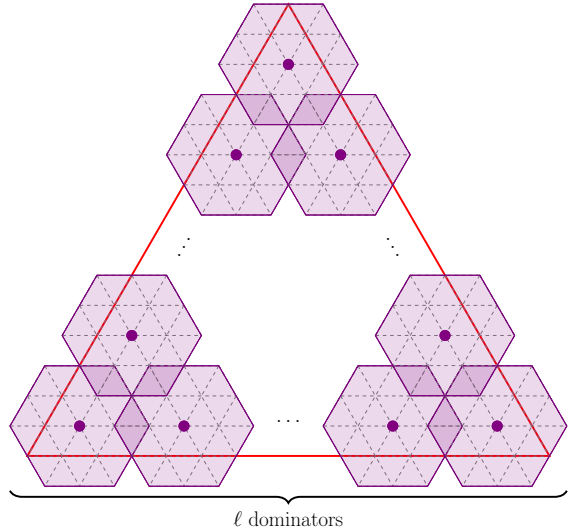}}
    \caption{Placing $T_n$ within the reach of dominators (with $t=3$).}
    \label{fig:upperbound}
\end{figure}

Theorem \ref{thm:NEWdomnumberfornodd} does not provide a tight upper bound. For example when $\ell=3$ and $t=3$, the results states that $\gamma_{3,1}(T_{9})\leq \Delta_3=6$, but in fact we can use less dominators, see Figure~\ref{fig:counterexample}.

\begin{figure}[h]
    \centering
     \resizebox{.25\textwidth}{!}{
     \input{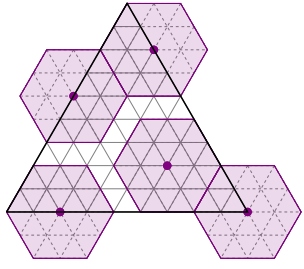}}
    \caption{Example establishing that $\gamma_{3,1}(T_9)\leq5<6=\Delta_{3}$.}\label{fig:counterexample}
\end{figure}

By specializing to $t=3$ we can give a better upper bound for  $\gamma_{3,1}(T_n)$.

\begin{theorem}\label{thm:NEWdomnumberfort=3}
If $n\geq 1$, then $\gamma_{3,1}(T_{n})\leq\Delta_{\left\lfloor\frac{n}{4}+1\right\rfloor}$.
\end{theorem}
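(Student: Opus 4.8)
The plan is to prove the sharper statement $\gamma_{3,1}(T_{4\ell-1})\le \Delta_\ell$ for all $\ell\ge 1$ and then descend to arbitrary $n$. First I would apply Proposition~\ref{prop:bounding}(3): writing $\ell=\left\lfloor \frac n4+1\right\rfloor$, the floor inequalities give $4\ell-4\le n\le 4\ell-1$, so $T_n$ is a subgraph of $T_{4\ell-1}$ and hence $\gamma_{3,1}(T_n)\le\gamma_{3,1}(T_{4\ell-1})$. Thus it is enough to dominate $T_{4\ell-1}$ with $\Delta_\ell$ towers of strength $3$, which I would carry out by induction on $\ell$.

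The construction is a triangular array of hexagonal reaches. Recording the reach of a strength-$3$ tower row by row relative to its center (Lemma~\ref{lemma:reach for general t}), it occupies horizontal segments of $3,4,5,4,3$ consecutive vertices in the rows at offsets $+2,+1,0,-1,-2$. The one computation I would isolate is the \emph{band-filling} fact: if towers are spaced four apart along one row, and a parallel row of towers is placed four rows away and shifted horizontally by two, then the two rows together cover every vertex of the closed band between them. Indeed, each tower-row already covers its own center row and its two adjacent rows completely; only the two rows at offset $\pm 2$ are problematic, and there each tower-row misses an arithmetic progression of common difference $4$. The horizontal shift of $2$ makes the two missed progressions (one of the form $4\mathbb Z$, the other $2+4\mathbb Z$) complementary, so their union is the entire row.

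For the induction, the base case $\ell=1$ is $T_3$, settled by Lemma~\ref{lem:smalln t=1} with a single tower $=\Delta_1$ (and $\ell=2$, namely $T_7$, is a further check via Lemma~\ref{lem:large n t=1}, giving $3=\Delta_2$). Assume $T_{4\ell-1}$ is dominated by the array whose tower-rows sit on the lines $a\equiv 2\Mod{4}$, with $\ell-k$ towers on the $k$th such line, aligned so that the longest side of the triangle is a distance-$1$ row (hence fully covered) while the opposite vertex is a single vertex lying on a distance-$2$ row. To pass to $T_{4\ell+3}$ I would keep this array fixed near the apex and append one new tower-row of $\ell+1$ towers, parallel to and four rows beyond the current longest side, shifted by two so as to interlock. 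The band-filling fact then covers the new width-four trapezoidal strip, the new longest side is again a distance-$1$ row, and the tower count grows by $\ell+1$, producing $\Delta_\ell+(\ell+1)=\Delta_{\ell+1}$.

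The delicate point, where I expect the real work to lie, is the truncation of the band-filling argument at the two slanted sides of the triangle, since there each tower-row is only a finite block of $\ell-k$ towers rather than an infinite row. I would control this by giving the $k$th row a horizontal span of exactly $4(\ell-k)$, matching the triangle's width at the widest row that row is responsible for, and by invoking the remark following Lemma~\ref{lem:large n t=1} that boundary towers may be placed just outside $T_n$ and then slid onto it. This buys enough slack that neither missed progression ever reaches a slanted edge, and the lone apex vertex is covered by a final choice of the topmost tower's horizontal offset. Verifying that these finite truncations leave no vertex of either slanted side uncovered is the technical heart of the argument.
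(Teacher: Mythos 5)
Your proposal is correct, and the dominating set it builds is exactly the one in the paper: a triangular array of strength-$3$ towers, with tower rows four lattice rows apart, consecutive rows interlocking, and one fewer tower per row toward the apex, so that a new row of $\ell+1$ towers is added every four steps. What differs is the packaging, and yours is arguably tighter. The paper inducts on $n$ in steps of one, splits into the four residue classes of $n$ modulo $4$, and verifies coverage entirely by figures (explicit pictures for $0\leq n\leq 15$, schematic ones for each inductive case); you instead invoke Proposition~\ref{prop:bounding}(3) once to reduce to $n=4\ell-1$, induct on $\ell$, and replace the pictorial verification by the mod-$4$ band-filling computation, a lemma the paper never isolates. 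That computation is right: the rows at offset $\pm1$ and $0$ are covered outright, and in the offset-$2$ row the positions missed from below form $2+4\mathbb{Z}$ while those missed from the shifted row above form $4\mathbb{Z}$; these are disjoint (not ``complementary'' --- the point is that no vertex is missed by both rows), so the band is covered. One caution: the shift of two must be read in Euclidean horizontal coordinates, where the radius-$2$ ball's $\pm2$-rows are centered on the tower. In the lattice coordinates of Theorem~\ref{theorem:infinitedomination} the ball is slanted (its $+2$-row is $\{0,1,2\}$ and its $-2$-row is $\{-2,-1,0\}$ relative to the tower), so consecutive tower rows must differ by $4(\alpha_1+\alpha_2)$; literally shifting the $\alpha_1$-index by two is the unique displacement that fails. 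Finally, the truncation of the bands at the two slanted sides, which you rightly flag as the technical heart, is precisely the point where the paper itself offers nothing beyond its figures; your outline handles it no less completely than the published proof, and its interior-coverage argument is more rigorous.
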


Before presenting a proof of this result notice that by Lemmas \ref{lem:smalln t=1} and \ref{lem:large n t=1}
\begin{align*}
    \gamma_{3,1}(T_1)&=1=\Delta_{1}=\Delta_{\lfloor\frac{1}{4}+1\rfloor},\quad \gamma_{3,1}(T_3)=1=\Delta_{1}=\Delta_{\lfloor\frac{3}{4}+1\rfloor}\\
    \gamma_{3,1}(T_5)&=3=\Delta_{2}=\Delta_{\lfloor\frac{5}{4}+1\rfloor},\quad\gamma_{3,1}(T_7)=3=\Delta_{2}=\Delta_{\lfloor\frac{7}{4}+1\rfloor}.
\end{align*} 
Hence, the upper bound provided in Theorem \ref{thm:NEWdomnumberfort=3}  is tight in some cases, but it is not tight in general. Again, Figure \ref{fig:counterexample} provides an example, as in this case the upper bounds provided by Theorem \ref{thm:NEWdomnumberfornodd} and Theorem \ref{thm:NEWdomnumberfort=3} are equal. However, the upper bound in Theorem \ref{thm:NEWdomnumberfort=3} is overall better than the bound of Theorem \ref{thm:NEWdomnumberfornodd} provided $\ell\geq 5$.

\begin{proof}[Proof of Theorem \ref{thm:NEWdomnumberfort=3}]
Let $T_0$ denote the graph consisting of one single vertex. Then the illustrations in Figure~\ref{basecases} establish that $\gamma_{3,1}(T_n)\leq \Delta_{\lfloor\frac{n}{4}+1\rfloor}$ for all $0\leq n\leq 15$. Note that whenever the dominators fall outside of the graph $T_n$, it suffices to shift them to its nearest boundary. However, at times we continue to display the right most dominators outside of $T_n$ for sake of simplicity in our diagrams.

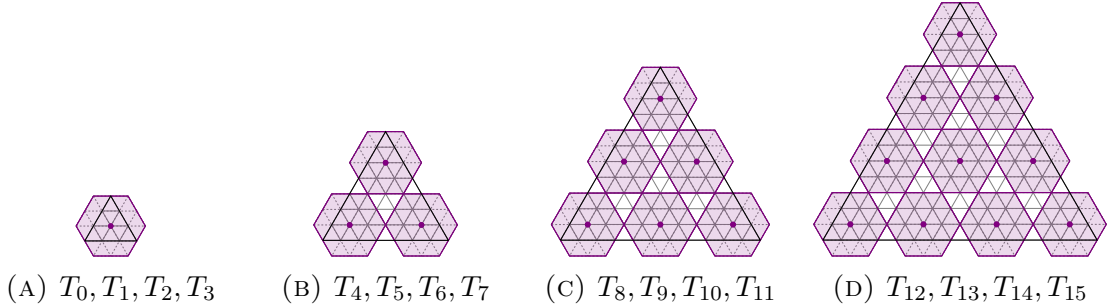
\begin{figure}[h]
\tikzset{
    hexagon/.pic={%%%  define vertices with coordinates
\coordinate (0;0) at (0,0); 
\foreach \c in {1,...,2}{%  
\foreach \i in {0,...,5}{% 
\pgfmathtruncatemacro\j{\c*\i}
\coordinate (\c;\j) at (60*\i:\c);  
} }
\foreach \i in {0,2,...,10}{% 
\pgfmathtruncatemacro\j{mod(\i+2,12)} 
\pgfmathtruncatemacro\k{\i+1}
\coordinate (2;\k) at ($(2;\i)!.5!(2;\j)$) ;}

 %%%%%%%%% draw lines %%%%%%%%
 \foreach \i in {0,...,6}{% 
 \pgfmathtruncatemacro\k{\i}
 \pgfmathtruncatemacro\l{10-\i}
 \draw[gray, dashed] (2;\k)--(2;\l);
 \pgfmathtruncatemacro\k{6-\i} 
 \pgfmathtruncatemacro\l{mod(8+\i,12)}   
 \draw[gray, dashed] (2;\k)--(2;\l); 
 \pgfmathtruncatemacro\k{9-\i} 
 \pgfmathtruncatemacro\l{mod(9+\i,12)}   
 \draw[gray, dashed] (2;\k)--(2;\l);} 
 
%%%%%%%%% draw points %%%%%%%% 
\fill [violet] (0;0) circle (4.5pt);

\draw[thick, violet] (0:\R) \foreach \x in {60,120,...,359} {
                -- (\x:\R)
            }-- cycle (90:\R);
                       \fill [violet] (0;0) circle (4.5pt);

\filldraw[fill=violet!80!violet,opacity=0.15] (0:\R) \foreach \x in {60,120,...,359} {
                -- (\x:\R)
            }-- cycle (90:\R);
\draw[thick, violet] (0:\R) \foreach \x in {60,120,...,359} {
                -- (\x:\R)
            }-- cycle (90:\R);
            }}
    
 \begin{tabular}[c]{cccc} 
\begin{subfigure}[t]{.2\textwidth}
\centering
    \resizebox{.3\textwidth}{!}{
\begin{tikzpicture}
    \foreach \row in {0, 1,...,3} {
        \draw[gray] ($\row*(0.5, {0.5*sqrt(3)})$) -- ($(3,0)+\row*(-0.5, {0.5*sqrt(3)})$);
        \draw[gray] ($\row*(1, 0)$) -- ($(3/2,{3/2*sqrt(3)})+\row*(0.5,{-0.5*sqrt(3)})$);
        \draw[gray] ($\row*(1, 0)$) -- ($(0,0)+\row*(0.5,{0.5*sqrt(3)})$);
    }
	\pic at (1.5,.86) {hexagon}; 
	\draw[black, ultra thick](0,0)--(3,0)--(1.5, 2.59808)--(0,0);
\end{tikzpicture} 
     }
    \caption{$T_0, T_1,T_2,T_3$}
    \label{fig:G_0,1,2,3}
    \end{subfigure}
&
\begin{subfigure}[t]{.2\textwidth}
\centering
    \resizebox{.6\textwidth}{!}{
\begin{tikzpicture}
    \foreach \row in {0, 1,...,7} {
        \draw[gray] ($\row*(0.5, {0.5*sqrt(3)})$) -- ($(7,0)+\row*(-0.5, {0.5*sqrt(3)})$);
        \draw[gray] ($\row*(1, 0)$) -- ($(7/2,{7/2*sqrt(3)})+\row*(0.5,{-0.5*sqrt(3)})$);
        \draw[gray] ($\row*(1, 0)$) -- ($(0,0)+\row*(0.5,{0.5*sqrt(3)})$);
    }
	\pic at (1.5,.86) {hexagon};
	\pic at (5.5,.86) {hexagon};
	\pic at (3.5,4.33) {hexagon};
	\draw[black, ultra thick](0,0)--(7,0)--(3.5, 6.06218)--(0,0);
\end{tikzpicture}
     }
\caption{$T_4,T_5,T_6,T_7$}
\label{fig:G_4,5,6,7}
\end{subfigure}
&
\begin{subfigure}[t]{.2\textwidth}
\centering
    \resizebox{.9\textwidth}{!}{
\begin{tikzpicture}
    \foreach \row in {0, 1,...,11} {
        \draw[gray] ($\row*(0.5, {0.5*sqrt(3)})$) -- ($(11,0)+\row*(-0.5, {0.5*sqrt(3)})$);
        \draw[gray] ($\row*(1, 0)$) -- ($(11/2,{11/2*sqrt(3)})+\row*(0.5,{-0.5*sqrt(3)})$);
        \draw[gray] ($\row*(1, 0)$) -- ($(0,0)+\row*(0.5,{0.5*sqrt(3)})$);
    }
	\pic at (1.5,.86) {hexagon};
	\pic at (5.5,.86) {hexagon};
	\pic at (9.5,.86) {hexagon};
	\pic at (3.5,4.33) {hexagon};
	\pic at (7.5,4.33) {hexagon};
	\pic at (5.5,7.79) {hexagon};
	\draw[black, ultra thick](0,0)--(11,0)--(5.5, 9.52628)--(0,0);
\end{tikzpicture} 
     }
    \caption{$T_8,T_9,T_{10},T_{11}$}
    \label{fig:G_8,9,10,11}
    \end{subfigure}
    &
\begin{subfigure}[t]{.24\textwidth}
\resizebox{1\textwidth}{!}{
\begin{tikzpicture}
    \foreach \row in {0, 1,...,15} {
        \draw[gray] ($\row*(0.5, {0.5*sqrt(3)})$) -- ($(15,0)+\row*(-0.5, {0.5*sqrt(3)})$);
        \draw[gray] ($\row*(1, 0)$) -- ($(15/2,{15/2*sqrt(3)})+\row*(0.5,{-0.5*sqrt(3)})$);
        \draw[gray] ($\row*(1, 0)$) -- ($(0,0)+\row*(0.5,{0.5*sqrt(3)})$);
    }
	\pic at (1.5,.86) {hexagon};
	\pic at (5.5,.86) {hexagon};
	\pic at (9.5,.86) {hexagon};
	\pic at (13.5,.86) {hexagon};
	\pic at (3.5,4.33) {hexagon};
	\pic at (7.5,4.33) {hexagon};
	\pic at (11.5,4.33) {hexagon};
	\pic at (5.5,7.79) {hexagon};
	\pic at (9.5,7.79) {hexagon};
	\pic at (7.5,11.26) {hexagon};
	\draw[black, ultra thick](0,0)--(15,0)--(7.5, 12.9904)--(0,0);
	\end{tikzpicture} 
     }
    \caption{$T_{12},T_{13},T_{14},T_{15}$}
    \label{fig:G_12,13,14,15}
    \end{subfigure}
 \end{tabular}
\caption{Bases cases for Theorem \ref{thm:NEWdomnumberfort=3}.}\label{basecases}
\end{figure}

Assume, for induction, that $\gamma_{3,1}(T_{k})\leq\Delta_{\left\lfloor\frac{k}{4}+1\right\rfloor}$ for all $0\leq k\leq n$. We have 4  cases to consider depending on whether $n\equiv 0,1,2,3\pmod{4}$.
If $n\equiv0\pmod{4}$, then we arrange the $\Delta_{\lfloor\frac{n}{4}+1\rfloor}$ dominators in $T_n$ as in Figure \ref{fig:0mod4A}. Now by placing $T_{n+1}$ as depicted in Figure \ref{fig:0mod4B}, we note that $\gamma_{3,1}(T_{n+1})\leq\Delta_{\lfloor\frac{n}{4}+1\rfloor}=\Delta_{\lfloor\frac{n+1}{4}+1\rfloor}$, the last equality holding since $n\equiv 0\pmod{4}$.

\begin{figure}[h!]
    \centering
    \begin{tabular}[c]{cc}
    \begin{subfigure}{.3\textwidth}
    \resizebox{1\textwidth}{!}{
    \tikzset{
    hex/.pic={
%%%  define vertices with coordinates
\coordinate (0;0) at (0,0); 
\foreach \c in {1,...,2}{%  
\foreach \i in {0,...,5}{% 
\pgfmathtruncatemacro\j{\c*\i}
\coordinate (\c;\j) at (60*\i:\c);  
} }
\foreach \i in {0,2,...,10}{% 
\pgfmathtruncatemacro\j{mod(\i+2,12)} 
\pgfmathtruncatemacro\k{\i+1}
\coordinate (2;\k) at ($(2;\i)!.5!(2;\j)$) ;}

 %%%%%%%%% draw lines %%%%%%%%
 \foreach \i in {0,...,6}{% 
 \pgfmathtruncatemacro\k{\i}
 \pgfmathtruncatemacro\l{10-\i}
 \draw[gray] (2;\k)--(2;\l); 
 \pgfmathtruncatemacro\k{6-\i} 
 \pgfmathtruncatemacro\l{mod(8+\i,12)}   
 \draw[gray] (2;\k)--(2;\l); 
 \pgfmathtruncatemacro\k{9-\i} 
 \pgfmathtruncatemacro\l{mod(9+\i,12)}   
 \draw[gray] (2;\k)--(2;\l);} 
 
%%%%%%%%% draw points %%%%%%%% 
\fill [black] (0;0) circle (3.5pt);

\draw[line width=.75mm, violet] (0:\R) \foreach \x in {60,120,...,359} {
                -- (\x:\R)
            }-- cycle (90:\R);

            \fill [violet] (0;0) circle (4.5pt);

\filldraw[fill=violet!80!violet,opacity=0.15] (0:\R) \foreach \x in {60,120,...,359} {
                -- (\x:\R)
            }-- cycle (90:\R);
\draw[thick, violet] (0:\R) \foreach \x in {60,120,...,359} {
                -- (\x:\R)
            }-- cycle (90:\R);
            }}
\begin{tikzpicture}
\draw[line width=.75mm, red] (0,0)--(8,0);
\node[red] at (9,0) {{\bf\LARGE$\ldots$}};
\draw[line width=.75mm, red] (10,0)--(12,0);
\draw[line width=.75mm, red] (0,0)--(4,6.93);
\node[red, rotate=60] at (4.5,7.79) {{\bf\LARGE$\ldots$}};
\draw[line width=.75mm, red] (5,8.66)--(6,10.39)--(7,8.66);
\node[red, rotate=-60] at (7.5,7.79) {{\bf\LARGE$\ldots$}};
\draw[line width=.75mm, red] (8,6.93)--(12,0);
\pic at (1.5,.866) {hex};
\pic at (5.5,.866) {hex};
\pic at (13.5,.866) {hex};
\pic at (3.5, 4.33) {hex};
\pic at (7.5,11.25) {hex};
\end{tikzpicture}}
    \caption{}\label{fig:0mod4A}
    \end{subfigure}
    &
    \begin{subfigure}{.3\textwidth}
    \resizebox{0.95\textwidth}{!}{
    \tikzset{
    hex/.pic={
%%%  define vertices with coordinates
\coordinate (0;0) at (0,0); 
\foreach \c in {1,...,2}{%  
\foreach \i in {0,...,5}{% 
\pgfmathtruncatemacro\j{\c*\i}
\coordinate (\c;\j) at (60*\i:\c);  
} }
\foreach \i in {0,2,...,10}{% 
\pgfmathtruncatemacro\j{mod(\i+2,12)} 
\pgfmathtruncatemacro\k{\i+1}
\coordinate (2;\k) at ($(2;\i)!.5!(2;\j)$) ;}

 %%%%%%%%% draw lines %%%%%%%%
 \foreach \i in {0,...,6}{% 
 \pgfmathtruncatemacro\k{\i}
 \pgfmathtruncatemacro\l{10-\i}
 \draw[gray] (2;\k)--(2;\l); 
 \pgfmathtruncatemacro\k{6-\i} 
 \pgfmathtruncatemacro\l{mod(8+\i,12)}   
 \draw[gray] (2;\k)--(2;\l); 
 \pgfmathtruncatemacro\k{9-\i} 
 \pgfmathtruncatemacro\l{mod(9+\i,12)}   
 \draw[gray] (2;\k)--(2;\l);} 
 
%%%%%%%%% draw points %%%%%%%% 
\fill [black] (0;0) circle (3.5pt);

\draw[line width=.75mm, violet] (0:\R) \foreach \x in {60,120,...,359} {
                -- (\x:\R)
            }-- cycle (90:\R);

            \fill [violet] (0;0) circle (4.5pt);

\filldraw[fill=violet!80!violet,opacity=0.15] (0:\R) \foreach \x in {60,120,...,359} {
                -- (\x:\R)
            }-- cycle (90:\R);
\draw[thick, violet] (0:\R) \foreach \x in {60,120,...,359} {
                -- (\x:\R)
            }-- cycle (90:\R);
            }}
\begin{tikzpicture}
\draw[line width=.75mm, red] (0,0)--(8,0);
\node[red] at (9,0) {{\bf\LARGE$\ldots$}};
\draw[line width=.75mm, red] (10,0)--(12,0);
\draw[line width=.75mm, red] (0,0)--(4,6.93);
\node[red, rotate=60] at (4.5,7.79) {{\bf\LARGE$\ldots$}};
\draw[line width=.75mm, red] (5,8.66)--(6,10.39)--(7,8.66);
\node[red, rotate=-60] at (7.5,7.79) {{\bf\LARGE$\ldots$}};
\draw[line width=.75mm, red] (8,6.93)--(12,0);
\draw[line width=.75mm, red] (6,10.39)--(6.5,11.26)--(13,0)--(12,0);
\pic at (1.5,.866) {hex};
\pic at (5.5,.866) {hex};
\pic at (13.5,.866) {hex};
\pic at (3.5, 4.33) {hex};
\pic at (7.5,11.25) {hex};

\end{tikzpicture}}
    \caption{}\label{fig:0mod4B}
    \end{subfigure}
    \end{tabular}
    \caption{$T_n$ with $n\equiv0\pmod{4}$.}
    \label{fig:0mod4}
\end{figure}

If $n\equiv1\pmod{4}$, then we arrange the $\Delta_{\lfloor\frac{n}{4}+1\rfloor}$ dominators in $T_n$ as in Figure \ref{fig:1mod4A}. Now by placing $T_{n+1}$ as depicted in Figure \ref{fig:1mod4B}, we note that $\gamma_{3,1}(T_{n+1})\leq\Delta_{\lfloor\frac{n}{4}+1\rfloor}=\Delta_{\lfloor\frac{n+1}{4}+1\rfloor}$, the last equality holding since $n\equiv 1\pmod{4}$.

\begin{figure}[h!]
    \centering
    \begin{tabular}[c]{cc}
    \begin{subfigure}{.3\textwidth}
    \resizebox{1\textwidth}{!}{
    \tikzset{
    hex/.pic={
%%%  define vertices with coordinates
\coordinate (0;0) at (0,0); 
\foreach \c in {1,...,2}{%  
\foreach \i in {0,...,5}{% 
\pgfmathtruncatemacro\j{\c*\i}
\coordinate (\c;\j) at (60*\i:\c);  
} }
\foreach \i in {0,2,...,10}{% 
\pgfmathtruncatemacro\j{mod(\i+2,12)} 
\pgfmathtruncatemacro\k{\i+1}
\coordinate (2;\k) at ($(2;\i)!.5!(2;\j)$) ;}

 %%%%%%%%% draw lines %%%%%%%%
 \foreach \i in {0,...,6}{% 
 \pgfmathtruncatemacro\k{\i}
 \pgfmathtruncatemacro\l{10-\i}
 \draw[gray] (2;\k)--(2;\l); 
 \pgfmathtruncatemacro\k{6-\i} 
 \pgfmathtruncatemacro\l{mod(8+\i,12)}   
 \draw[gray] (2;\k)--(2;\l); 
 \pgfmathtruncatemacro\k{9-\i} 
 \pgfmathtruncatemacro\l{mod(9+\i,12)}   
 \draw[gray] (2;\k)--(2;\l);} 
 
%%%%%%%%% draw points %%%%%%%% 
\fill [black] (0;0) circle (3.5pt);

\draw[line width=.75mm, violet] (0:\R) \foreach \x in {60,120,...,359} {
                -- (\x:\R)
            }-- cycle (90:\R);
\filldraw[fill=violet!80!violet,opacity=0.15] (0:\R) \foreach \x in {60,120,...,359} {
                -- (\x:\R)
            }-- cycle (90:\R);

}}
\begin{tikzpicture}
\draw[line width=.75mm, red] (0,0)--(8,0);
\node[red] at (9,0) {{\bf\LARGE$\ldots$}};
\draw[line width=.75mm, red] (10,0)--(12,0);
\draw[line width=.75mm, red] (0,0)--(4,6.93);
\node[red, rotate=60] at (4.5,7.79) {{\bf\LARGE$\ldots$}};
\draw[line width=.75mm, red] (5,8.66)--(6,10.39)--(8,6.93);
\node[red, rotate=-60] at (8.5,6.06) {{\bf\LARGE$\ldots$}};
\draw[line width=.75mm, red] (9,5.2)--(12,0);
\pic at (1.5,.866) {hex};
\pic at (5.5,.866) {hex};
\pic at (12.5,.866) {hex};
\pic at (3.5, 4.33) {hex};
\pic at (7,10.39) {hex};
\end{tikzpicture}}
    \caption{}\label{fig:1mod4A}
    \end{subfigure}
    &
    \begin{subfigure}{.3\textwidth}
    \resizebox{0.95\textwidth}{!}{
    \tikzset{
    hex/.pic={
%%%  define vertices with coordinates
\coordinate (0;0) at (0,0); 
\foreach \c in {1,...,2}{%  
\foreach \i in {0,...,5}{% 
\pgfmathtruncatemacro\j{\c*\i}
\coordinate (\c;\j) at (60*\i:\c);  
} }
\foreach \i in {0,2,...,10}{% 
\pgfmathtruncatemacro\j{mod(\i+2,12)} 
\pgfmathtruncatemacro\k{\i+1}
\coordinate (2;\k) at ($(2;\i)!.5!(2;\j)$) ;}

 %%%%%%%%% draw lines %%%%%%%%
 \foreach \i in {0,...,6}{% 
 \pgfmathtruncatemacro\k{\i}
 \pgfmathtruncatemacro\l{10-\i}
 \draw[gray] (2;\k)--(2;\l); 
 \pgfmathtruncatemacro\k{6-\i} 
 \pgfmathtruncatemacro\l{mod(8+\i,12)}   
 \draw[gray] (2;\k)--(2;\l); 
 \pgfmathtruncatemacro\k{9-\i} 
 \pgfmathtruncatemacro\l{mod(9+\i,12)}   
 \draw[gray] (2;\k)--(2;\l);} 
 
%%%%%%%%% draw points %%%%%%%% 
\fill [black] (0;0) circle (3.5pt);

\draw[line width=.75mm, violet] (0:\R) \foreach \x in {60,120,...,359} {
                -- (\x:\R)
            }-- cycle (90:\R);
\filldraw[fill=violet!80!violet,opacity=0.15] (0:\R) \foreach \x in {60,120,...,359} {
                -- (\x:\R)
            }-- cycle (90:\R);

}}
\begin{tikzpicture}
\draw[line width=.75mm, red] (0,0)--(8,0);
\node[red] at (9,0) {{\bf\LARGE$\ldots$}};
\draw[line width=.75mm, red] (10,0)--(12,0);
\draw[line width=.75mm, red] (0,0)--(4,6.93);
\node[red, rotate=60] at (4.5,7.79) {{\bf\LARGE$\ldots$}};
\draw[line width=.75mm, red] (5,8.66)--(6,10.39)--(8,6.93);
\node[red, rotate=-60] at (8.5,6.06) {{\bf\LARGE$\ldots$}};
\draw[line width=.75mm, red] (9,5.2)--(12,0);
\draw[line width=.75mm, red] (6,10.39)--(6.5,11.26)--(13,0)--(12,0);
\pic at (1.5,.866) {hex};
\pic at (5.5,.866) {hex};
\pic at (12.5,.866) {hex};
\pic at (3.5, 4.33) {hex};
\pic at (7,10.39) {hex};
\end{tikzpicture}}
    \caption{}\label{fig:1mod4B}
    \end{subfigure}
    \end{tabular}
    \caption{$T_n$ with $n\equiv1\pmod{4}$.}
    \label{fig:1mod4}
\end{figure}

If $n\equiv2\pmod{4}$, then we arrange the $\Delta_{\lfloor\frac{n}{4}+1\rfloor}$ dominators in $T_n$ as in Figure \ref{fig:2mod4A}. Now by placing $T_{n+1}$ as depicted in Figure \ref{fig:2mod4B}, we note that $\gamma_{3,1}(T_{n+1})\leq\Delta_{\lfloor\frac{n}{4}+1\rfloor}=\Delta_{\lfloor\frac{n+1}{4}+1\rfloor}$, the last equality holding since $n\equiv 2\pmod{4}$.

\begin{figure}[h!]
    \centering
    \begin{tabular}[c]{cc}
    \begin{subfigure}{.3\textwidth}
    \resizebox{1\textwidth}{!}{
    \tikzset{
    hex/.pic={
%%%  define vertices with coordinates
\coordinate (0;0) at (0,0); 
\foreach \c in {1,...,2}{%  
\foreach \i in {0,...,5}{% 
\pgfmathtruncatemacro\j{\c*\i}
\coordinate (\c;\j) at (60*\i:\c);  
} }
\foreach \i in {0,2,...,10}{% 
\pgfmathtruncatemacro\j{mod(\i+2,12)} 
\pgfmathtruncatemacro\k{\i+1}
\coordinate (2;\k) at ($(2;\i)!.5!(2;\j)$) ;}

 %%%%%%%%% draw lines %%%%%%%%
 \foreach \i in {0,...,6}{% 
 \pgfmathtruncatemacro\k{\i}
 \pgfmathtruncatemacro\l{10-\i}
 \draw[gray] (2;\k)--(2;\l); 
 \pgfmathtruncatemacro\k{6-\i} 
 \pgfmathtruncatemacro\l{mod(8+\i,12)}   
 \draw[gray] (2;\k)--(2;\l); 
 \pgfmathtruncatemacro\k{9-\i} 
 \pgfmathtruncatemacro\l{mod(9+\i,12)}   
 \draw[gray] (2;\k)--(2;\l);} 
 
%%%%%%%%% draw points %%%%%%%% 
\fill [black] (0;0) circle (3.5pt);

\draw[line width=.75mm, violet] (0:\R) \foreach \x in {60,120,...,359} {
                -- (\x:\R)
            }-- cycle (90:\R);

        \fill [violet] (0;0) circle (4.5pt);

\filldraw[fill=violet!80!violet,opacity=0.15] (0:\R) \foreach \x in {60,120,...,359} {
                -- (\x:\R)
            }-- cycle (90:\R);

            }}
\begin{tikzpicture}
\draw[line width=.75mm, red] (0,0)--(8,0);
\node[red] at (9,0) {{\bf\LARGE$\ldots$}};
\draw[line width=.75mm, red] (10,0)--(12,0);
\draw[line width=.75mm, red] (0,0)--(4,6.93);
\node[red, rotate=60] at (4.5,7.79) {{\bf\LARGE${\ldots}$}};
\draw[line width=.75mm, red] (5,8.66)--(6,10.39)--(8,6.93);
\node[red, rotate=-60] at (8.5,6.06) {{\bf\LARGE$\ldots$}};
\draw[line width=.75mm, red] (9,5.2)--(12,0);
\pic at (1.5,.866) {hex};
\pic at (5.5,.866) {hex};
\pic at (11.5,.866) {hex};
\pic at (3.5, 4.33) {hex};
\pic at (6.5,9.53) {hex};
\end{tikzpicture}}
    \caption{}\label{fig:2mod4A}
    \end{subfigure}
    &
    \begin{subfigure}{.3\textwidth}
    \resizebox{0.95\textwidth}{!}{
    \tikzset{
    hex/.pic={
%%%  define vertices with coordinates
\coordinate (0;0) at (0,0); 
\foreach \c in {1,...,2}{%  
\foreach \i in {0,...,5}{% 
\pgfmathtruncatemacro\j{\c*\i}
\coordinate (\c;\j) at (60*\i:\c);  
} }
\foreach \i in {0,2,...,10}{% 
\pgfmathtruncatemacro\j{mod(\i+2,12)} 
\pgfmathtruncatemacro\k{\i+1}
\coordinate (2;\k) at ($(2;\i)!.5!(2;\j)$) ;}

 %%%%%%%%% draw lines %%%%%%%%
 \foreach \i in {0,...,6}{% 
 \pgfmathtruncatemacro\k{\i}
 \pgfmathtruncatemacro\l{10-\i}
 \draw[gray] (2;\k)--(2;\l); 
 \pgfmathtruncatemacro\k{6-\i} 
 \pgfmathtruncatemacro\l{mod(8+\i,12)}   
 \draw[gray] (2;\k)--(2;\l); 
 \pgfmathtruncatemacro\k{9-\i} 
 \pgfmathtruncatemacro\l{mod(9+\i,12)}   
 \draw[gray] (2;\k)--(2;\l);} 
 
%%%%%%%%% draw points %%%%%%%% 
\fill [black] (0;0) circle (3.5pt);

\draw[line width=.75mm, violet] (0:\R) \foreach \x in {60,120,...,359} {
                -- (\x:\R)
            }-- cycle (90:\R);
            \fill [violet] (0;0) circle (4.5pt);

\filldraw[fill=violet!80!violet,opacity=0.15] (0:\R) \foreach \x in {60,120,...,359} {
                -- (\x:\R)
            }-- cycle (90:\R);

            }}
\begin{tikzpicture}
\draw[line width=.75mm, red] (0,0)--(8,0);
\node[red] at (9,0) {{\bf\LARGE$\ldots$}};
\draw[line width=.75mm, red] (10,0)--(12,0);
\draw[line width=.75mm, red] (0,0)--(4,6.93);
\node[red, rotate=60] at (4.5,7.79) {{\bf\LARGE$\ldots$}};
\draw[line width=.75mm, red] (5,8.66)--(6,10.39)--(8,6.93);
\node[red, rotate=-60] at (8.5,6.06) {{\bf\LARGE$\ldots$}};
\draw[line width=.75mm, red] (9,5.2)--(12,0);
\draw[line width=.75mm, red] (6,10.39)--(6.5,11.26)--(13,0)--(12,0);
\pic at (1.5,.866) {hex};
\pic at (5.5,.866) {hex};
\pic at (11.5,.866) {hex};
\pic at (3.5, 4.33) {hex};
\pic at (6.5,9.53) {hex};
\end{tikzpicture}}
    \caption{}\label{fig:2mod4B}
    \end{subfigure}
    \end{tabular}
    \caption{$T_n$ with $n\equiv2\pmod{4}$.}
    \label{fig:2mod4}
\end{figure}
The last case to consider is when $n\equiv3 \pmod{4}$. In this case we arrange the $\Delta_{\lfloor\frac{n}{4}+1\rfloor}$ dominators in $T_n$ as in Figure \ref{fig:3mod4A}. Now by placing $T_{n+1}$ as depicted in Figure \ref{fig:3mod4B}, we note that $\Delta_{\lfloor\frac{n}{4}+1\rfloor}$ are not enough dominators to cover the right most side of $T_{n+1}$. So we add an additional $\lfloor\frac{n}{4}+1\rfloor+1$ dominators as depicted in Figure \ref{fig:3mod4C} to dominate the right hand side of $T_{n+1}$. Thus 
$\gamma_{3,1}(T_{n+1})\leq \Delta_{\lfloor\frac{n}{4}+1\rfloor}+\left(\left\lfloor\frac{n}{4}+1\right\rfloor+1\right)=\Delta_{\lfloor\frac{n}{4}+1\rfloor+1}=\Delta_{\lfloor\frac{n+1}{4}+1\rfloor}$,
the last equality holding since $n\equiv 3\pmod{4}$.
\begin{figure}[h!]
    \centering
    \begin{tabular}[c]{ccc}
    \begin{subfigure}{.3\textwidth}
    \resizebox{1\textwidth}{!}{
    \tikzset{
    hex/.pic={
%%%  define vertices with coordinates
\coordinate (0;0) at (0,0); 
\foreach \c in {1,...,2}{%  
\foreach \i in {0,...,5}{% 
\pgfmathtruncatemacro\j{\c*\i}
\coordinate (\c;\j) at (60*\i:\c);  
} }
\foreach \i in {0,2,...,10}{% 
\pgfmathtruncatemacro\j{mod(\i+2,12)} 
\pgfmathtruncatemacro\k{\i+1}
\coordinate (2;\k) at ($(2;\i)!.5!(2;\j)$) ;}

 %%%%%%%%% draw lines %%%%%%%%
 \foreach \i in {0,...,6}{% 
 \pgfmathtruncatemacro\k{\i}
 \pgfmathtruncatemacro\l{10-\i}
 \draw[gray] (2;\k)--(2;\l); 
 \pgfmathtruncatemacro\k{6-\i} 
 \pgfmathtruncatemacro\l{mod(8+\i,12)}   
 \draw[gray] (2;\k)--(2;\l); 
 \pgfmathtruncatemacro\k{9-\i} 
 \pgfmathtruncatemacro\l{mod(9+\i,12)}   
 \draw[gray] (2;\k)--(2;\l);} 
 
%%%%%%%%% draw points %%%%%%%% 
\fill [black] (0;0) circle (3.5pt);

\draw[line width=.75mm, violet] (0:\R) \foreach \x in {60,120,...,359} {
                -- (\x:\R)
            }-- cycle (90:\R);

            \fill [violet] (0;0) circle (4.5pt);

\filldraw[fill=violet!80!violet,opacity=0.15] (0:\R) \foreach \x in {60,120,...,359} {
                -- (\x:\R)
            }-- cycle (90:\R);
            }}
\begin{tikzpicture}
\draw[line width=.75mm, red] (0,0)--(7.5,0);
\node[red] at (8.5,0) {{\bf\LARGE$\ldots$}};
\draw[line width=.75mm, red] (9.5,0)--(13,0);
\draw[line width=.75mm, red] (0,0)--(3.75,6.5);
\node[red, rotate=60] at (4.25,7.36) {{\bf\LARGE$\ldots$}};
\draw[line width=.75mm, red] (4.75,8.22)--(6.5,11.26)--(9,6.93);
\node[red, rotate=-60] at (9.5,6.06) {{\bf\LARGE$\ldots$}};
\draw[line width=.75mm, red] (10,5.2)--(13,0);
\pic at (1.5,.866) {hex};
\pic at (5.5,.866) {hex};
\pic at (11.5,.866) {hex};
\pic at (3.5, 4.33) {hex};
\pic at (6.5,9.53) {hex};
\end{tikzpicture}}
    \caption{}\label{fig:3mod4A}
    \end{subfigure}
    &
    \begin{subfigure}{.3\textwidth}
    \resizebox{0.95\textwidth}{!}{
    \tikzset{
    hex/.pic={
%%%  define vertices with coordinates
\coordinate (0;0) at (0,0); 
\foreach \c in {1,...,2}{%  
\foreach \i in {0,...,5}{% 
\pgfmathtruncatemacro\j{\c*\i}
\coordinate (\c;\j) at (60*\i:\c);  
} }
\foreach \i in {0,2,...,10}{% 
\pgfmathtruncatemacro\j{mod(\i+2,12)} 
\pgfmathtruncatemacro\k{\i+1}
\coordinate (2;\k) at ($(2;\i)!.5!(2;\j)$) ;}

 %%%%%%%%% draw lines %%%%%%%%
 \foreach \i in {0,...,6}{% 
 \pgfmathtruncatemacro\k{\i}
 \pgfmathtruncatemacro\l{10-\i}
 \draw[gray] (2;\k)--(2;\l); 
 \pgfmathtruncatemacro\k{6-\i} 
 \pgfmathtruncatemacro\l{mod(8+\i,12)}   
 \draw[gray] (2;\k)--(2;\l); 
 \pgfmathtruncatemacro\k{9-\i} 
 \pgfmathtruncatemacro\l{mod(9+\i,12)}   
 \draw[gray] (2;\k)--(2;\l);} 
 
%%%%%%%%% draw points %%%%%%%% 
\fill [black] (0;0) circle (3.5pt);

\draw[line width=.75mm, violet] (0:\R) \foreach \x in {60,120,...,359} {
                -- (\x:\R)
            }-- cycle (90:\R);
\filldraw[fill=violet!80!violet,opacity=0.15] (0:\R) \foreach \x in {60,120,...,359} {
                -- (\x:\R)
            }-- cycle (90:\R);

}}
\begin{tikzpicture}
\draw[line width=.75mm, red] (0,0)--(7.5,0);
\node[red] at (8.5,0) {{\bf\LARGE$\ldots$}};
\draw[line width=.75mm, red] (9.5,0)--(13,0);
\draw[line width=.75mm, red] (0,0)--(3.75,6.5);
\node[red, rotate=60] at (4.25,7.36) {{\bf\LARGE$\ldots$}};
\draw[line width=.75mm, red] (4.75,8.22)--(6.5,11.26)--(9,6.93);
\node[red, rotate=-60] at (9.5,6.06) {{\bf\LARGE$\ldots$}};
\draw[line width=.75mm, red] (10,5.2)--(13,0);
\draw[line width=.75mm, red] (6.5,11.26)--(7,12.12)--(14,0)--(13,0);
\pic at (1.5,.866) {hex};
\pic at (5.5,.866) {hex};
\pic at (11.5,.866) {hex};
\pic at (3.5, 4.33) {hex};
\pic at (6.5,9.53) {hex};
\end{tikzpicture}}
    \caption{}\label{fig:3mod4B}
    \end{subfigure}
    &
    \begin{subfigure}{.3\textwidth}
    \resizebox{0.95\textwidth}{!}{
    \tikzset{
    hex/.pic={
%%%  define vertices with coordinates
\coordinate (0;0) at (0,0); 
\foreach \c in {1,...,2}{%  
\foreach \i in {0,...,5}{% 
\pgfmathtruncatemacro\j{\c*\i}
\coordinate (\c;\j) at (60*\i:\c);  
} }
\foreach \i in {0,2,...,10}{% 
\pgfmathtruncatemacro\j{mod(\i+2,12)} 
\pgfmathtruncatemacro\k{\i+1}
\coordinate (2;\k) at ($(2;\i)!.5!(2;\j)$) ;}

 %%%%%%%%% draw lines %%%%%%%%
 \foreach \i in {0,...,6}{% 
 \pgfmathtruncatemacro\k{\i}
 \pgfmathtruncatemacro\l{10-\i}
 \draw[gray] (2;\k)--(2;\l); 
 \pgfmathtruncatemacro\k{6-\i} 
 \pgfmathtruncatemacro\l{mod(8+\i,12)}   
 \draw[gray] (2;\k)--(2;\l); 
 \pgfmathtruncatemacro\k{9-\i} 
 \pgfmathtruncatemacro\l{mod(9+\i,12)}   
 \draw[gray] (2;\k)--(2;\l);} 
 
%%%%%%%%% draw points %%%%%%%% 
\fill [black] (0;0) circle (3.5pt);

\draw[line width=.75mm, violet] (0:\R) \foreach \x in {60,120,...,359} {
                -- (\x:\R)
            }-- cycle (90:\R);
\filldraw[fill=violet!80!violet,opacity=0.15] (0:\R) \foreach \x in {60,120,...,359} {
                -- (\x:\R)
            }-- cycle (90:\R);

}}
\begin{tikzpicture}
\draw[line width=.75mm, red] (0,0)--(7.5,0);
\node[red] at (8.5,0) {{\bf\LARGE$\ldots$}};
\draw[line width=.75mm, red] (9.5,0)--(13,0);
\draw[line width=.75mm, red] (0,0)--(3.75,6.5);
\node[red, rotate=60] at (4.25,7.36) {{\bf\LARGE$\ldots$}};
\draw[line width=.75mm, red] (4.75,8.22)--(6.5,11.26)--(9,6.93);
\node[red, rotate=-60] at (9.5,6.06) {{\bf\LARGE$\ldots$}};
\draw[line width=.75mm, red] (10,5.2)--(13,0);
\draw[line width=.75mm, red] (6.5,11.26)--(7,12.12)--(14,0)--(13,0);
\pic at (1.5,.866) {hex};
\pic at (5.5,.866) {hex};
\pic at (11.5,.866) {hex};
\pic at (15.5,.866) {hex};
\pic at (3.5, 4.33) {hex};
\pic at (13.5, 4.33) {hex};
\pic at (6.5,9.53) {hex};
\pic at (10.5,9.53) {hex};
\pic at (8.5,12.99) {hex};
\end{tikzpicture}}
    \caption{}\label{fig:3mod4C}
    \end{subfigure}
    \end{tabular}
    \caption{$T_n$ with $n\equiv3\pmod{4}$.}
    \label{fig:3mod4}
\end{figure}
\end{proof}

\section{Domination patterns on the triangular grid}\label{sec:infinitegrid}
In this section, we study $\T$, the infinite triangular grid graph, and construct efficient $(t,r)$ broadcast dominating sets for all $t\geq r\geq 1$ by using the geometric interpretation of a dominator's hexagonal reach. We refer to a set of dominators in an efficient $(t,r)$ broadcast dominating set as a broadcast domination pattern.

We preface the broadcast domination patterns with a definition of how to denote the placement of dominators on $\T$.
Throughout this section, we define the unit vectors $\alpha_1 = (1,0)$ and $\alpha_2 = \left( \frac{-1}{2}, \frac{\sqrt{3}}{2} \right)$ and use these vectors to denote specific vertices in $\T$ starting at a predetermined origin vertex $(0,0)$.

In order to anchor our $\tr$ broadcast domination patterns, we force that the origin $(0,0)$ is always a dominating vertex. With this in mind, we now state our main result.
\begin{theorem} \label{theorem:infinitedomination}
Let $t\geq r\geq 1$. Then an efficient $(t,r)$ broadcast domination pattern for the infinite triangular grid is given by placing dominators at every vertex of the form
\begin{align}
[(2t-r)x+(t-r)y]\alpha_1 + [tx+ (2t-r)y]\alpha_2
\label{thm:equation}
\end{align}
\noindent {with $x,y \in \mathbb{Z}$.} 
\end{theorem}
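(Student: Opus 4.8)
The plan is to use the lattice structure of the dominator set, together with the hexagonal graph metric on $\T$, to reduce the two conditions of Definition \ref{def:efficient} to a local check, and to isolate the ``far'' case as the real content of the theorem.

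First I would fix coordinates. Writing a vertex as $a\alpha_1+b\alpha_2$, the six neighbors of the origin are $\pm\alpha_1,\pm\alpha_2,\pm(\alpha_1+\alpha_2)$, so the graph distance from the origin to $a\alpha_1+b\alpha_2$ is the hexagonal metric $\delta(a,b)=\max(|a|,|b|)$ when $a,b$ have the same sign and $|a|+|b|$ otherwise. The dominators in \eqref{thm:equation} are exactly the lattice $\Lambda$ spanned by $v_1=(2t-r)\alpha_1+t\alpha_2$ and $v_2=(t-r)\alpha_1+(2t-r)\alpha_2$; the index of $\Lambda$ equals $|{\det}|=(2t-r)^2-t(t-r)=3t^2-3tr+r^2$, so there is exactly one tower per $3t^2-3tr+r^2$ vertices. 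Since $\Lambda$ is a group of translations preserving the whole configuration, the reception function $r(u)$ is $\Lambda$-periodic, and it suffices to verify Definition \ref{def:efficient} on a single fundamental domain.

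The key metric input is the minimum distance between towers. Computing $\delta$ on $v_1$, $v_2$, and $v_1-v_2=t\alpha_1+(r-t)\alpha_2$ shows all three have length $2t-r$, and a short check confirms these are shortest vectors of $\Lambda$; hence distinct towers are at distance at least $2t-r$. This settles the ``near'' case. If $d(u,v_0)<t-r$ for some tower $v_0$, then $v_0$ is unique (two towers within $t-r$ of $u$ would lie within $2(t-r-1)<2t-r$ of each other), and for every other tower $v'$ the triangle inequality gives $d(u,v')\ge(2t-r)-(t-r-1)=t+1>t-1$, so $v'$ contributes no signal and $r(u)=t-d(u,v_0)$, as required.

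The crux is the ``far'' case: for every $u$ with $d(u,v)\ge t-r$ for all towers $v$, I must show $\sum_{v:\,d(u,v)\le t-1}\bigl(t-d(u,v)\bigr)=r$. Only towers at distances $t-r,\dots,t-1$ contribute (with weights $r,\dots,1$), so writing $n_d(u)$ for the number of towers at distance $d$, this is the identity $\sum_{d=t-r}^{t-1}(t-d)\,n_d(u)=r$. I would prove it by placing $u$ inside one of the triangular cells cut out by three mutually nearest towers (pairwise distance $2t-r$, since $\Lambda$ is itself a triangular lattice in the three short directions $v_1,v_2,v_1-v_2$) and enumerating, as a function of the position of $u$ in that cell, exactly which lattice points fall in the annulus $t-r\le\delta\le t-1$; the weighted count should then telescope to $r$. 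As a global consistency check, summing reception over a fundamental domain must reproduce the total signal $t^3$ emitted by one tower (from the ring sizes $6k$ underlying Lemma \ref{lemma:reach for general t} one gets $\sum_{k=1}^{t-1}6k(t-k)=t^3-t$), and one verifies that the near contribution plus $r$ times the number of far vertices indeed equals $t^3$.

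The main obstacle is precisely this counting. The spacing $2t-r$ makes the hexagonal reaches of radius $t-1$ overlap as soon as $r\ge 2$, and the overlaps must conspire to sum to exactly $r$ at every far vertex. The one-dimensional analogue is false—periodizing a single tent function of width $2t-r$ is not constant between peaks—so the argument must genuinely use the two-dimensional arrangement of towers in all three short directions at once. Finding a uniform parametrization of the position of $u$ within a fundamental triangle, so as to avoid $t$- and $r$-dependent casework, is the step I expect to require the most care.
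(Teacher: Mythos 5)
Your setup is sound and, up to the point where the real work begins, it follows the same strategy as the paper: reduce to a local verification near one tower using lattice periodicity and hexagonal symmetry, then enumerate. Your preliminary computations are correct — the index $(2t-r)^2-t(t-r)=3t^2-3tr+r^2$, the fact that $v_1$, $v_2$, $v_1-v_2$ all have hexagonal length $2t-r$, and the near case (uniqueness of a tower within distance $t-r$ of $u$, plus the bound $d(u,v')\geq(2t-r)-(t-r-1)=t+1$ showing no interference) — and indeed your near-case argument is more explicit than anything the paper writes down. The consistency check that receptions over a fundamental domain sum to $t^3$ is also correct, but it constrains only the \emph{average} reception of the far vertices, so it cannot substitute for the pointwise claim. (A second, smaller debt: "a short check confirms these are shortest vectors of $\Lambda$" is asserted, not shown, and the near case leans on it.)

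The genuine gap is exactly where you locate it: the far case, i.e.\ the identity $\sum_{d=t-r}^{t-1}(t-d)\,n_d(u)=r$ for every vertex $u$ at distance at least $t-r$ from all towers, is never proved — you state that the weighted count "should then telescope to $r$," and that statement is the entire content of Theorem \ref{theorem:infinitedomination}. For what it is worth, the enumeration you anticipate is feasible and is precisely what the paper carries out, and it needs no $t$- or $r$-dependent casework. After restricting by hexagonal symmetry to the sector spanned by $A=(0,0)$, $B=t\alpha_1+(r-t)\alpha_2$, and $C=(2t-r)\alpha_1+t\alpha_2$, the far vertices in that sector form two uniform families: a parallelogram of points $(t-r+1+i)\alpha_1+(1+i+j)\alpha_2$ with $0\leq i\leq r-2$, $0\leq j\leq t-r$, receiving $r-1-i$ from $A$ and $i+1$ from $C$ and nothing else; and a triangle of points $(t-r+2+i)\alpha_1+(1+j)\alpha_2$ with $0\leq i\leq r-3$, $0\leq j\leq i$, receiving $r-2-i$ from $A$, $1+j$ from one of $B,C$ and $1+i-j$ from the other. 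In each family the contributions telescope to $r$. The reason no casework arises — the uniform parametrization you worried about — is that within each family the coordinates of $u$ and of $u-B$, $u-C$ have constant signs and the relevant maxima resolve the same way for all $(i,j)$, so your formula $\delta=\max(|a|,|b|)$ (same signs) versus $|a|+|b|$ (opposite signs) evaluates to a single linear expression per family. Until you carry out such an enumeration, your proposal is a correct reduction plus an unproven claim, not a proof.
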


\begin{proof}
We must show that placing dominators at each of the vertices of \eqref{thm:equation} dominates the infinite triangular grid efficiently. We do so by using the definition of the reach of a dominator and by calculating distances from vertices to dominators. 

Figure \ref{fig:hh} provides a visualization of the vertices reached by dominator $A$ located at $(0,0)$. The green region denotes the location of vertices near $A$, receiving signal at least $r$ from dominator $A$. Those lying on the outer boundary of the yellow region are the vertices receiving $r-1$ signal from dominator $A$, and the vertices lying on the red outer boundary are receiving $1$ signal from dominator $A$. Any other vertex of $\T$ not lying within the red hexagon receive no signal from dominator $A$.

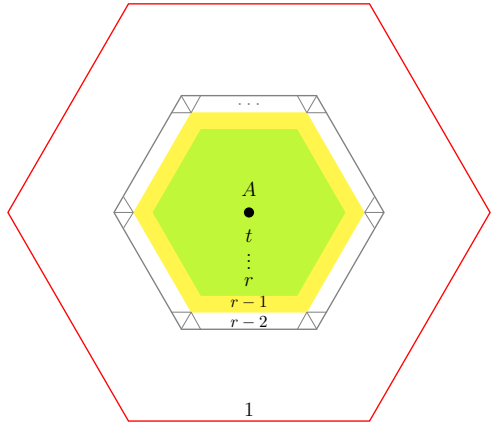
\begin{figure}[H]
     \centering
     \resizebox{.4\textwidth}{!}{
     \begin{tikzpicture}
\draw[thick, red] (0:2.5*\R) \foreach \x in {60,120,...,359} {
                -- (\x:2.5*\R)
            }-- cycle (90:2.5*\R);
\draw[thick, gray] (0:1.4*\R) \foreach \x in {60,120,...,359} {
                -- (\x:1.4*\R)
            }-- cycle (90:1.4*\R);
\fill[yellow, opacity=.7] (0:1.2*\R) \foreach \x in {60,120,...,359} {
                -- (\x:1.2*\R)
            }-- cycle (90:1.2*\R);
\fill[green, nearly transparent] (0:\R) \foreach \x in {60,120,...,359} {
                -- (\x:\R)
            }-- cycle (90:\R);

\draw[gray](2.4,0)--(2.8,0);
\draw[gray](2.4,0)--(2.6,.32);
\draw[gray](2.4,0)--(2.6,-.32);
\draw[gray](1.2,2.07)--(1.41,2.43);
\draw[gray](1.2,2.07)--(1.62,2.07);
\draw[gray](1.2,2.07)--(1,2.43);

\draw[gray](-2.4,0)--(-2.8,0);
\draw[gray](-2.4,0)--(-2.6,.32);
\draw[gray](-2.4,0)--(-2.6,-.32);
\draw[gray](-1.2,2.07)--(-1.41,2.43);
\draw[gray](-1.2,2.07)--(-1.62,2.07);
\draw[gray](-1.2,2.07)--(-1,2.43);

\draw[gray](-1.2,-2.07)--(-1.41,-2.43);
\draw[gray](-1.2,-2.07)--(-1.62,-2.07);
\draw[gray](-1.2,-2.07)--(-1,-2.43);
\draw[gray](1.2,-2.07)--(1.41,-2.43);
\draw[gray](1.2,-2.07)--(1.62,-2.07);
\draw[gray](1.2,-2.07)--(1,-2.43);

\fill [black] (0:0) circle (3pt);
\node at (0:0) [above =2mm]{$A$};
\node at (0:0) [below =2mm]{$t$};
\node at (0,-.5) [below]{$\vdots$};
\node at (0,-1) [below =2mm]{$r$};
\node at (0,-1.4) [below =2mm]{\footnotesize $r-1$};
\node at (0,-1.8) [below =2mm]{\footnotesize $r-2$};
\node at (0,-3.6) [below =2mm]{$1$};
\node[gray] at (0,2.25) {$\ldots$};

\end{tikzpicture}}
     \caption{Generalized reach of a dominator with signal strength $t$. }
     \label{fig:hh}
 \end{figure}

In Figure \ref{fig:dp1} we plot 6 other dominators surrounding $A$, as described by Equation \eqref{thm:equation}:
\begin{align*}
    &B\;\mbox{at}\;t\alpha_1+(-t+r)\alpha_2 \;\mbox{when}\;x=1,y=-1,\\
    &C\;\mbox{at}\;(2t-r)\alpha_1+t\alpha_2 \;\mbox{when}\;x=1,y=0,\\
    &D\;\mbox{at}\;(t-r)\alpha_1+(2t-r)\alpha_2\;\mbox{when}\;x=0,y=1,\\
    &E\;\mbox{at}\;-t\alpha_1+(t-r)\alpha_2 \;\mbox{when}\; x=-1,y=1,\\
    &F\;\mbox{at}\;(-2t+r)\alpha_1-t\alpha_2 \;\mbox{when}\;x=-1,y=0,\\
    &G\;\mbox{at}\;(-t+r)\alpha_1+(-2t+r)\alpha_2 \;\mbox{when}\;x=0,y=-1.
    % (tx+(2t-r)y)\alpha_1+((2t-r)x+(t-r)y)\alpha_2
\end{align*}
\begin{figure}[H]
    \centering
    \begin{subfigure}[b]{0.48\textwidth}
     \resizebox{\textwidth}{!}{\tikzset{
    harrishexagon/.pic={
    \draw[thick, red] (0:2.5*\R) \foreach \x in {60,120,...,359} {
                -- (\x:2.5*\R)
            }-- cycle (90:2.5*\R);
    \draw[thick, gray] (0:1.4*\R) \foreach \x in {60,120,...,359} {
                -- (\x:1.4*\R)
            }-- cycle (90:1.4*\R);
    \fill[yellow, opacity=.7] (0:1.2*\R) \foreach \x in {60,120,...,359} {
                -- (\x:1.2*\R)
            }-- cycle (90:1.2*\R);
    \fill[green, nearly transparent] (0:\R) \foreach \x in {60,120,...,359} {
                -- (\x:\R)
            }-- cycle (90:\R);
\draw[gray](2.4,0)--(2.8,0);
\draw[gray](2.4,0)--(2.6,.32);
\draw[gray](2.4,0)--(2.6,-.32);
\draw[gray](1.2,2.07)--(1.41,2.43);
\draw[gray](1.2,2.07)--(1.62,2.07);
\draw[gray](1.2,2.07)--(1,2.43);

\draw[gray](-2.4,0)--(-2.8,0);
\draw[gray](-2.4,0)--(-2.6,.32);
\draw[gray](-2.4,0)--(-2.6,-.32);
\draw[gray](-1.2,2.07)--(-1.41,2.43);
\draw[gray](-1.2,2.07)--(-1.62,2.07);
\draw[gray](-1.2,2.07)--(-1,2.43);

\draw[gray](-1.2,-2.07)--(-1.41,-2.43);
\draw[gray](-1.2,-2.07)--(-1.62,-2.07);
\draw[gray](-1.2,-2.07)--(-1,-2.43);
\draw[gray](1.2,-2.07)--(1.41,-2.43);
\draw[gray](1.2,-2.07)--(1.62,-2.07);
\draw[gray](1.2,-2.07)--(1,-2.43);

\fill [black] (0:0) circle (3pt);
            }}
\resizebox{12cm}{!}{%

\begin{tikzpicture}[scale=0.5]
\begin{scope}[yscale=-1,xscale=1]
\pic at (0,0) {harrishexagon};
\node at (0,0) [below=2mm]{$A$};

\pic at (9.4,-9.35) {harrishexagon};
\node at (9.4,-9.35) [above right=2mm]{$C$};
\pic at (-3.4,-12.8) {harrishexagon};
\node at (-3.4,-12.8) [above=2mm]{$D$};
\pic at (-12.8,-3.5) {harrishexagon};
\node at (-12.8,-3.5) [above left=2mm]{$E$};
\pic at (-9.4,9.35) {harrishexagon};
\node at (-9.4,9.35) [below left=2mm]{$F$};
\pic at (3.4,12.8) {harrishexagon};
\node at (3.4,12.8) [below=2mm]{$G$};
\pic at (12.8,3.5) {harrishexagon};
\node at (12.8,3.5) [below right=2mm]{$B$};
\end{scope}
\end{tikzpicture}
}}
        \caption{Broadcast dominating pattern for $\T$.}
        \label{fig:dp}
    \end{subfigure}
    \hfill
    \begin{subfigure}[b]{0.48\textwidth}
       \resizebox{\textwidth}{!}{ \tikzset{
    harrishexagon/.pic={
    \draw[thick, red] (0:2.5*\R) \foreach \x in {60,120,...,359} {
                -- (\x:2.5*\R)
            }-- cycle (90:2.5*\R);
    \draw[thick, gray] (0:1.4*\R) \foreach \x in {60,120,...,359} {
                -- (\x:1.4*\R)
            }-- cycle (90:1.4*\R);
    \fill[yellow, opacity=.7] (0:1.2*\R) \foreach \x in {60,120,...,359} {
                -- (\x:1.2*\R)
            }-- cycle (90:1.2*\R);
    \fill[green, nearly transparent] (0:\R) \foreach \x in {60,120,...,359} {
                -- (\x:\R)
            }-- cycle (90:\R);
\draw[gray](2.4,0)--(2.8,0);
\draw[gray](2.4,0)--(2.6,.32);
\draw[gray](2.4,0)--(2.6,-.32);
\draw[gray](1.2,2.07)--(1.41,2.43);
\draw[gray](1.2,2.07)--(1.62,2.07);
\draw[gray](1.2,2.07)--(1,2.43);

\draw[gray](-2.4,0)--(-2.8,0);
\draw[gray](-2.4,0)--(-2.6,.32);
\draw[gray](-2.4,0)--(-2.6,-.32);
\draw[gray](-1.2,2.07)--(-1.41,2.43);
\draw[gray](-1.2,2.07)--(-1.62,2.07);
\draw[gray](-1.2,2.07)--(-1,2.43);

\draw[gray](-1.2,-2.07)--(-1.41,-2.43);
\draw[gray](-1.2,-2.07)--(-1.62,-2.07);
\draw[gray](-1.2,-2.07)--(-1,-2.43);
\draw[gray](1.2,-2.07)--(1.41,-2.43);
\draw[gray](1.2,-2.07)--(1.62,-2.07);
\draw[gray](1.2,-2.07)--(1,-2.43);

\fill [black] (0:0) circle (3pt);
            }}
\resizebox{12cm}{!}{%

\begin{tikzpicture}[scale=0.5]
\begin{scope}[yscale=-1,xscale=1]
\pic at (0,0) {harrishexagon};
\node at (0,0) [below=2mm]{$A$};

\pic at (9.4,-9.35) {harrishexagon};
\node at (9.4,-9.35) [above right=2mm]{$C$};
\pic at (-3.4,-12.8) {harrishexagon};
\node at (-3.4,-12.8) [above=2mm]{$D$};
\pic at (-12.8,-3.5) {harrishexagon};
\node at (-12.8,-3.5) [above left=2mm]{$E$};
\pic at (-9.4,9.35) {harrishexagon};
\node at (-9.4,9.35) [below left=2mm]{$F$};
\pic at (3.4,12.8) {harrishexagon};
\node at (3.4,12.8) [below=2mm]{$G$};
\pic at (12.8,3.5) {harrishexagon};
\node at (12.8,3.5) [below right=2mm]{$B$};
\end{scope}
\filldraw[fill=violet!80!violet,opacity=0.3] (4.4,.7)--(9.6,.7)--(5,8.7)--(2.4,4.2)--cycle;
\end{tikzpicture}

}}
        \caption{Shaded region.}
        \label{fig:shadedregion}
    \end{subfigure}
    \caption{Plotting 7 dominators in $\T$.}\label{fig:dp1}
\end{figure}
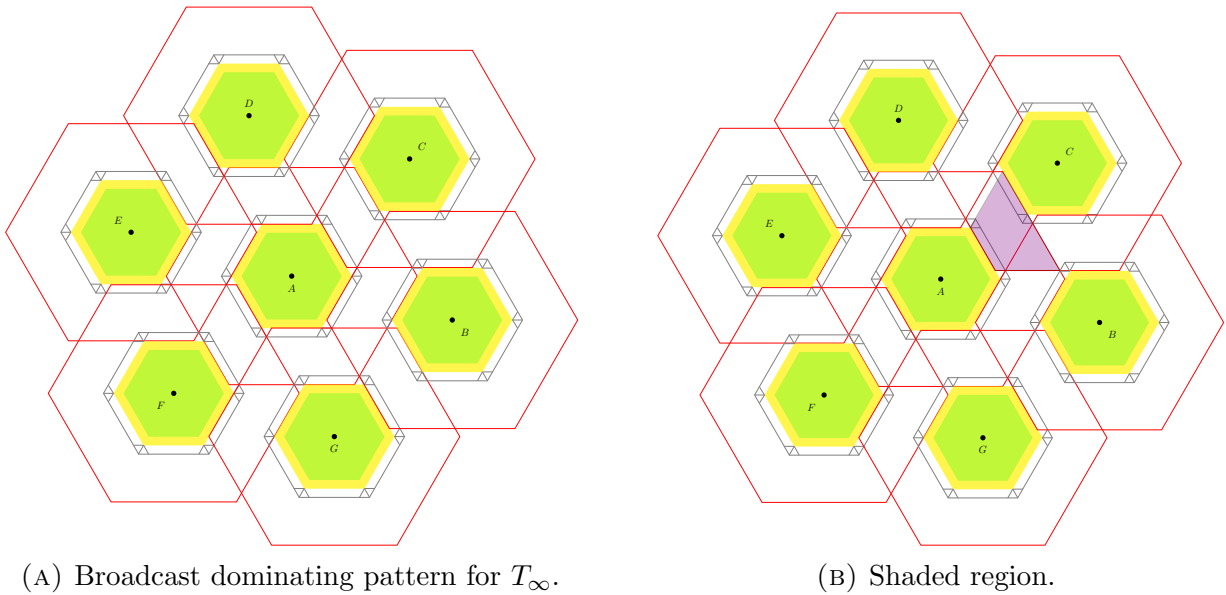
Note that by the hexagonal symmetry of the reach of the dominators, we need only prove that the vertices lying in the shaded purple region of Figure \ref{fig:shadedregion} are dominated efficiently. The remaining arguments  would follow analogously by using other dominators.

We now consider the points lying in the shaded region in two subsets as depicted in Figure~\ref{fig:dividingshadedregion}. The vertices lying on the parallelogram shaded region depicted in Figure~\ref{fig:rectangleregion} have coordinates
\begin{align}
    (t-r+1+i)\alpha_1+(1+i+j)\alpha_2 \;\mbox{where}\; 0\leq i\leq r-2 \;\mbox{and}\; 0\leq j\leq t-r
\end{align}
and receive $r-1-i$ signal from $A$, $i+1$ signal from $C$, and no signal from any other dominators. The signal received by these vertices is exactly $(r-1-i)+(i+1)=r$.
The vertices lying on the triangular shaded region depicted in Figure~\ref{fig:triangleregion} have coordinates
\begin{align}
    (t-r+2+i)\alpha_1+(1+j)\alpha_2 \;\mbox{where}\; 0\leq i\leq r-3\;\mbox{and}\; 0\leq j\leq i
\end{align}
and receive $r-2-i$ signal from $A$, $1+j$ signal from $B$, $1+i-j$ signal from $C$, and no signal from any other dominators. The  signal received by these vertices is exactly $(r-2-i)+(1+j)+(1+i-j)=r$.
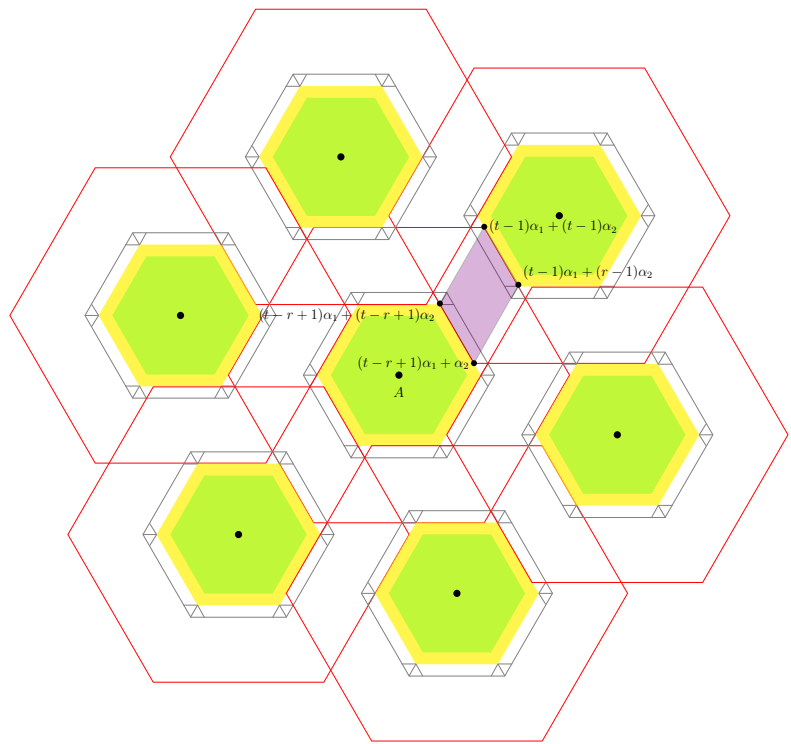
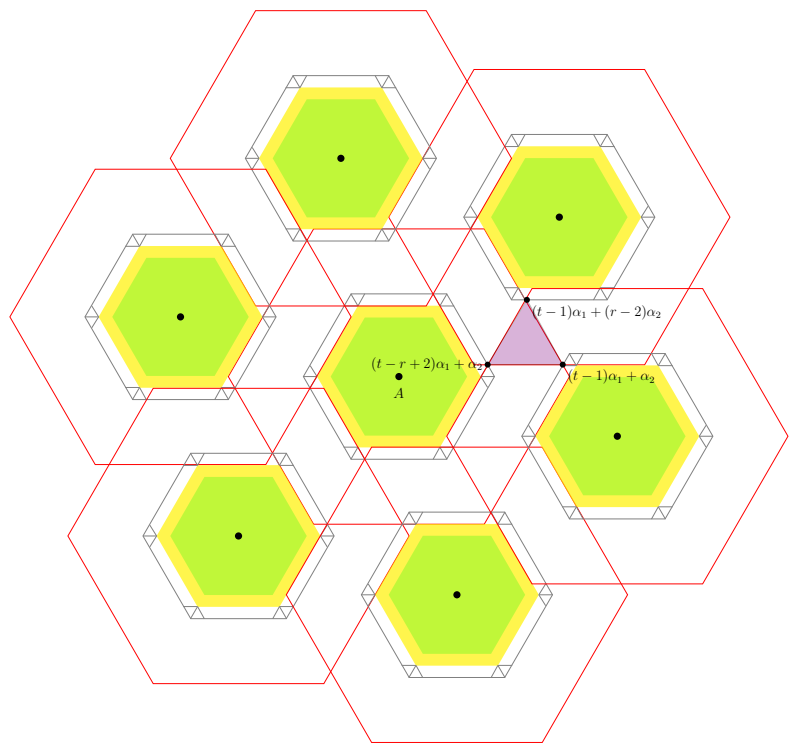
\begin{figure}
    \centering
    \begin{subfigure}[b]{0.6\textwidth}
       \resizebox{1.1\textwidth}{!}{ \tikzset{
    harrishexagon/.pic={
    \draw[thick, red] (0:2.5*\R) \foreach \x in {60,120,...,359} {
                -- (\x:2.5*\R)
            }-- cycle (90:2.5*\R);
    \draw[thick, gray] (0:1.4*\R) \foreach \x in {60,120,...,359} {
                -- (\x:1.4*\R)
            }-- cycle (90:1.4*\R);
    \fill[yellow, opacity=.7] (0:1.2*\R) \foreach \x in {60,120,...,359} {
                -- (\x:1.2*\R)
            }-- cycle (90:1.2*\R);
    \fill[green, nearly transparent] (0:\R) \foreach \x in {60,120,...,359} {
                -- (\x:\R)
            }-- cycle (90:\R);
\draw[gray](2.4,0)--(2.8,0);
\draw[gray](2.4,0)--(2.6,.32);
\draw[gray](2.4,0)--(2.6,-.32);
\draw[gray](1.2,2.07)--(1.41,2.43);
\draw[gray](1.2,2.07)--(1.62,2.07);
\draw[gray](1.2,2.07)--(1,2.43);

\draw[gray](-2.4,0)--(-2.8,0);
\draw[gray](-2.4,0)--(-2.6,.32);
\draw[gray](-2.4,0)--(-2.6,-.32);
\draw[gray](-1.2,2.07)--(-1.41,2.43);
\draw[gray](-1.2,2.07)--(-1.62,2.07);
\draw[gray](-1.2,2.07)--(-1,2.43);

\draw[gray](-1.2,-2.07)--(-1.41,-2.43);
\draw[gray](-1.2,-2.07)--(-1.62,-2.07);
\draw[gray](-1.2,-2.07)--(-1,-2.43);
\draw[gray](1.2,-2.07)--(1.41,-2.43);
\draw[gray](1.2,-2.07)--(1.62,-2.07);
\draw[gray](1.2,-2.07)--(1,-2.43);

\fill [black] (0:0) circle (3pt);
            }}
\resizebox{12cm}{!}{%

\begin{tikzpicture}[scale=0.5]
\begin{scope}[yscale=-1,xscale=1]
\pic at (0,0) {harrishexagon};
\node at (0,0) [below=2mm]{$A$};

\pic at (9.4,-9.35) {harrishexagon};
\node at (9.4,-9.35) [above=2mm]{};%{$C$};
\pic at (-3.4,-12.8) {harrishexagon};
\node at (-3.4,-12.8) [below=2mm]{};%{$D$};
\pic at (-12.8,-3.5) {harrishexagon};
\node at (-12.8,-3.5) [below=2mm]{};%{$E$};
\pic at (-9.4,9.35) {harrishexagon};
\node at (-9.4,9.35) [below=2mm]{};%{$F$};
\pic at (3.4,12.8) {harrishexagon};
\node at (3.4,12.8) [below=2mm]{};%{$G$};
\pic at (12.8,3.5) {harrishexagon};
\node at (12.8,3.5) [below=2mm]{};%{$B$};
\end{scope}
\filldraw[fill=violet!80!violet,opacity=0.3] (4.4,.7)--(7,5.3)--(5,8.7)--(2.4,4.2)--cycle;
 \fill [black] (4.4,.7) circle (5pt) node[ left] 
{{\bf{$(t-r+1)\alpha_1+\alpha_2$}}};
\fill[black] (7,5.3) circle (5pt) node [above right] {{\bf{$(t-1)\alpha_1+(r-1)\alpha_2$}}};
\fill[black] (5,8.7) circle (5pt)node [right] {{\bf{$(t-1)\alpha_1+(t-1)\alpha_2$}}};
\fill[black](2.4,4.2) circle (5pt)node [below left] {{\textbf{$(t-r+1)\alpha_1+(t-r+1)\alpha_2$}}};
\end{tikzpicture}

}}
        \caption{Parallelogram shaded region.}
        \label{fig:rectangleregion}
        \end{subfigure}
    \\
    \begin{subfigure}[b]{0.6\textwidth}
       \resizebox{1.1\textwidth}{!}{ \tikzset{
    harrishexagon/.pic={
    \draw[thick, red] (0:2.5*\R) \foreach \x in {60,120,...,359} {
                -- (\x:2.5*\R)
            }-- cycle (90:2.5*\R);
    \draw[thick, gray] (0:1.4*\R) \foreach \x in {60,120,...,359} {
                -- (\x:1.4*\R)
            }-- cycle (90:1.4*\R);
    \fill[yellow, opacity=.7] (0:1.2*\R) \foreach \x in {60,120,...,359} {
                -- (\x:1.2*\R)
            }-- cycle (90:1.2*\R);
    \fill[green, nearly transparent] (0:\R) \foreach \x in {60,120,...,359} {
                -- (\x:\R)
            }-- cycle (90:\R);
\draw[gray](2.4,0)--(2.8,0);
\draw[gray](2.4,0)--(2.6,.32);
\draw[gray](2.4,0)--(2.6,-.32);
\draw[gray](1.2,2.07)--(1.41,2.43);
\draw[gray](1.2,2.07)--(1.62,2.07);
\draw[gray](1.2,2.07)--(1,2.43);

\draw[gray](-2.4,0)--(-2.8,0);
\draw[gray](-2.4,0)--(-2.6,.32);
\draw[gray](-2.4,0)--(-2.6,-.32);
\draw[gray](-1.2,2.07)--(-1.41,2.43);
\draw[gray](-1.2,2.07)--(-1.62,2.07);
\draw[gray](-1.2,2.07)--(-1,2.43);

\draw[gray](-1.2,-2.07)--(-1.41,-2.43);
\draw[gray](-1.2,-2.07)--(-1.62,-2.07);
\draw[gray](-1.2,-2.07)--(-1,-2.43);
\draw[gray](1.2,-2.07)--(1.41,-2.43);
\draw[gray](1.2,-2.07)--(1.62,-2.07);
\draw[gray](1.2,-2.07)--(1,-2.43);

\fill [black] (0:0) circle (3pt);
            }}
\resizebox{12cm}{!}{%

\begin{tikzpicture}[scale=0.5]
\begin{scope}[yscale=-1,xscale=1]
\pic at (0,0) {harrishexagon};
\node at (0,0) [below=2mm]{$A$};

\pic at (9.4,-9.35) {harrishexagon};
\node at (9.4,-9.35) [below=2mm]{};%{$C$};
\pic at (-3.4,-12.8) {harrishexagon};
\node at (-3.4,-12.8) [below=2mm]{};%{$D$};
\pic at (-12.8,-3.5) {harrishexagon};
\node at (-12.8,-3.5) [below=2mm]{};%{$E$};
\pic at (-9.4,9.35) {harrishexagon};
\node at (-9.4,9.35) [below=2mm]{};%{$F$};
\pic at (3.4,12.8) {harrishexagon};
\node at (3.4,12.8) [below=2mm]{};%{$G$};
\pic at (12.8,3.5) {harrishexagon};
\node at (12.8,3.5) [below=2mm]{};%{$B$};
\end{scope}
\filldraw[fill=violet!80!violet,opacity=0.3] (5.2,.7)--(9.6,.7)--(7.5,4.5)--cycle;
\fill [black] (5.2,.7) circle (5pt) node[left] { $(t-r+2)\alpha_1+\alpha_2$};
\fill [black] (9.6,.7) circle (5pt) node[below right] { $(t-1)\alpha_1+\alpha_2$};
\fill [black] (7.5,4.5) circle (5pt) node[below right] {$(t-1)\alpha_1+(r-2)\alpha_2$};
\end{tikzpicture}
}}
        \caption{Triangular shaded region.}
        \label{fig:triangleregion}
        \end{subfigure}
        \caption{Subdividing the purple shaded region of Figure \ref{fig:shadedregion}.}
        \label{fig:dividingshadedregion}
\end{figure}
\end{proof}

\begin{corollary}\label{cor:mirrorpattern}
A second efficient $\tr$ broadcast domination pattern is obtained by placing dominators at every vertex of the form
\begin{align}
    [tx+(2t-r)y]\alpha_1+[(2t-r)x+(t-r)y]\alpha_2
\end{align}
\noindent {with $x,y \in \mathbb{Z}$.}
\end{corollary}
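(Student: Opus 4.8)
The plan is to realize the second pattern as the image of the first under a symmetry of $\T$, so that no new distance computations are needed. Write $S_1$ for the broadcast of Theorem \ref{theorem:infinitedomination} and $S_2$ for the set in the statement. Let $\sigma\colon\T\to\T$ be the reflection interchanging the two generating directions, i.e. the linear map determined by $\sigma(\alpha_1)=\alpha_2$ and $\sigma(\alpha_2)=\alpha_1$; on coefficients it acts by $\sigma(a\alpha_1+b\alpha_2)=b\alpha_1+a\alpha_2$. Applying $\sigma$ to a generic vertex of $S_1$, whose $\alpha_1$-coefficient is $(2t-r)x+(t-r)y$ and whose $\alpha_2$-coefficient is $tx+(2t-r)y$, simply swaps these two coefficients and produces exactly the vertex
\[
[tx+(2t-r)y]\alpha_1+[(2t-r)x+(t-r)y]\alpha_2,
\]
so that $\sigma(S_1)=S_2$.

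First I would verify that $\sigma$ is an automorphism of the triangular grid. Since $\alpha_1$ and $\alpha_2$ are unit vectors with $\langle\alpha_1,\alpha_2\rangle=-\tfrac12$, the involution $\sigma$ preserves all inner products among the basis vectors and is therefore an orthogonal transformation, namely the reflection across the bisector of the $120^\circ$ angle between $\alpha_1$ and $\alpha_2$. Because $\sigma$ permutes the generators of the lattice, it maps $V(\T)$ bijectively onto itself, and being a Euclidean isometry it sends unit-distance pairs to unit-distance pairs; hence $\sigma$ is a graph automorphism and preserves the graph metric, $d(\sigma u,\sigma v)=d(u,v)$ for all $u,v$. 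Note also that $\sigma(0,0)=(0,0)$, so the origin remains a dominator, consistent with the anchoring convention used for these patterns.

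Finally I would transport efficiency across $\sigma$. By Theorem \ref{theorem:infinitedomination}, $S_1$ is an efficient $(t,r)$ broadcast, and the notion of efficiency (Definition \ref{def:efficient}), together with the reception function $r(u)=\sum_{v\in S,\,u\in N_t(v)}(t-d(u,v))$, is phrased entirely in terms of distances. Since $\sigma$ preserves distances and carries $S_1$ onto $S_2$, the membership relation $d(u,v)<t-r$ and each summand $t-d(u,v)$ are preserved under $\sigma$, so the reception of $\sigma u$ under $S_2$ equals that of $u$ under $S_1$; thus $S_2$ dominates $\T$ with exactly the same reception profile as $S_1$ and is efficient. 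The only point demanding care is the bookkeeping in the first step—confirming that the coefficient swap induced by $\sigma$ reproduces the displayed formula—together with the routine verification that $\sigma$ is a genuine isometry of the lattice and not merely a lattice bijection, which is what guarantees that graph distances, and hence the entire efficiency condition, are preserved.
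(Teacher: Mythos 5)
Your proof is correct, and it rests on the same geometric observation the paper makes: the second pattern is the reflection of the first across the line through the origin and $\alpha_1+\alpha_2$ (your $\sigma$, which swaps $\alpha_1$ and $\alpha_2$, fixes $\alpha_1+\alpha_2$ and is precisely that reflection). Where you diverge is in what you do with this observation. The paper records the reflection as a remark and then disposes of the proof by saying it is ``analogous'' to that of Theorem \ref{theorem:infinitedomination}, implicitly asking the reader to re-run the parallelogram and triangle distance computations in mirrored coordinates; it omits all details. You instead make the symmetry itself carry the proof: you verify that $\sigma$ preserves the Gram matrix of $\{\alpha_1,\alpha_2\}$ (both unit vectors with inner product $-\tfrac12$), hence is a Euclidean isometry mapping $V(\T)$ onto itself, hence a graph automorphism preserving the graph metric, and you then observe that the reception function and Definition \ref{def:efficient} are stated purely in terms of graph distances, so efficiency transports across $\sigma$ with no computation at all. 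This is a genuinely cleaner route: it turns the corollary into a formal consequence of the theorem plus a general transport lemma (any distance-preserving automorphism carries efficient broadcasts to efficient broadcasts), which is both fully rigorous and reusable, whereas the paper's ``analogous proof'' framing leaves the actual verification to the reader. The one thing the paper's phrasing suggests that yours makes unnecessary is any repetition of the coordinate casework; nothing in your argument has a gap.
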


Note that the domination pattern presented in Corollary~\ref{cor:mirrorpattern} is the reflection of the pattern presented in Theorem \ref{theorem:infinitedomination}  across the line through the origin and the point $\alpha_1+\alpha_2$.  The proof of Corollary \ref{cor:mirrorpattern} is analogous top that of Theorem \ref{theorem:infinitedomination} and hence we omit the details. 
Using Theorem \ref{theorem:infinitedomination}, we provide illustrative examples of broadcast domination patters for the infinite triangular grid with $t=4$ and $1\leq r\leq 4$. These are presented in Figure \ref{fig:exampleoftheorem}.

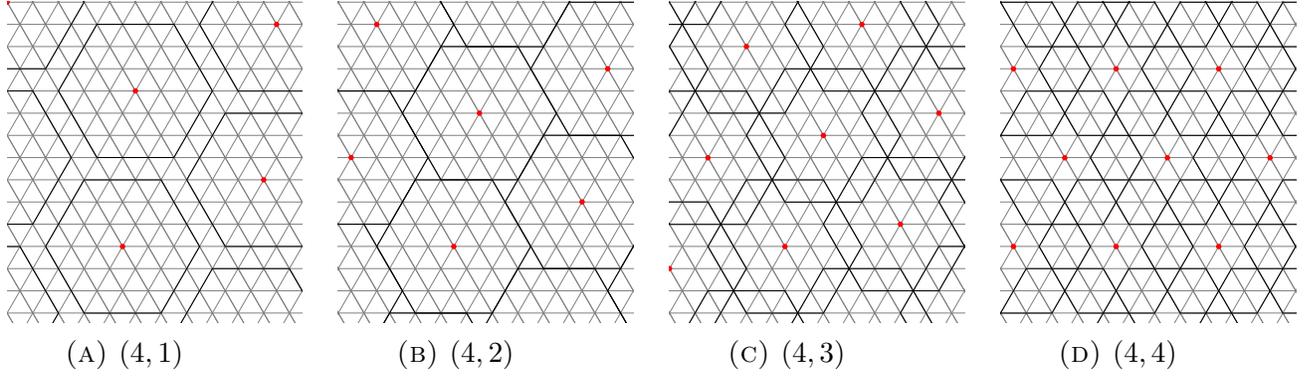
\begin{figure}[h]
    \centering
    \begin{subfigure}[t]{.2\textwidth}
    \centering
    \resizebox{1.6in}{!}{\tikzset{
    hexagon/.pic={
\draw[thick, black] (0:1.5*\R) \foreach \x in {60,120,...,359} {
                -- (\x:1.5*\R)
            }-- cycle (90:\R);
\fill [red] (0:0) circle (3pt);
            }}

\begin{tikzpicture}
\clip (1.5,.5) rectangle (13,13);

\foreach \x  in {0,...,25} {
        \draw[gray] (\x-11,0) -- ($(\x-11,0)+20*(0.5, {0.5*sqrt(3)})$);
        \draw[gray] (\x,0) -- ($(\x,0)+20*(-0.5, {0.5*sqrt(3)})$);
        \draw[gray] ($\x*(0, {0.5*sqrt(3)})$) -- ($\x*(15, {0.5*sqrt(3)})$);
        \draw[gray] (\x-5,0) -- ($(\x-5,0)+20*(-0.5, {0.5*sqrt(3)})$);
        \draw[gray] ($\x*(0, {0.5*sqrt(3)})$) -- ($\x*(-1, {0.5*sqrt(3)})$);
    }

\pic at (11,0) {hexagon};
\pic at (11.5,6.06) {hexagon};
\pic at (6,3.46) {hexagon};
\pic at (6.5,9.53) {hexagon};
\pic at (0.5,0.86) {hexagon};
\pic at (1,6.93) {hexagon};
\pic at (1.5,12.99) {hexagon};
\pic at (12,12.12) {hexagon};

\end{tikzpicture}}
    \caption{$(4,1)$}
    \end{subfigure}
    \hfill
    \begin{subfigure}[t]{.2\textwidth}
        \centering
    \resizebox{1.6in}{!}{\tikzset{
    hexagon/.pic={
\draw[thick, black] (0:1.5*\R) \foreach \x in {60,120,...,359} {
                -- (\x:1.5*\R)
            }-- cycle (90:\R);
\fill [red] (0:0) circle (3pt);
            }}

\begin{tikzpicture}
\clip (1.5,.5) rectangle (13,13);
\foreach \x  in {0,...,25} {
        \draw[gray] (\x-11,0) -- ($(\x-11,0)+20*(0.5, {0.5*sqrt(3)})$);
        \draw[gray] (\x,0) -- ($(\x,0)+20*(-0.5, {0.5*sqrt(3)})$);
        \draw[gray] ($\x*(0, {0.5*sqrt(3)})$) -- ($\x*(15, {0.5*sqrt(3)})$);
        \draw[gray] (\x-5,0) -- ($(\x-5,0)+20*(-0.5, {0.5*sqrt(3)})$);
        \draw[gray] ($\x*(0, {0.5*sqrt(3)})$) -- ($\x*(-1, {0.5*sqrt(3)})$);
    }
\pic at (5,-1.732) {hexagon};
\pic at (1,1.732) {hexagon};
\pic at (2,6.93) {hexagon};
\pic at (6,3.46) {hexagon};
\pic at (7,8.66) {hexagon};
\pic at (10,0) {hexagon};
\pic at (15,1.73) {hexagon};
\pic at (11,5.2) {hexagon};
\pic at (12,10.39) {hexagon};
\pic at (3,12.12) {hexagon};
\pic at (8,13.85) {hexagon};

\end{tikzpicture}}
    \caption{$(4,2)$}
    \end{subfigure}
    \hfill
    \begin{subfigure}[t]{.2\textwidth}
        \centering
    \resizebox{1.6in}{!}{\tikzset{
    hexagon/.pic={
\draw[thick, black] (0:1.5*\R) \foreach \x in {60,120,...,359} {
                -- (\x:1.5*\R)
            }-- cycle (90:\R);
\fill [red] (0:0) circle (3pt);
            }}

\begin{tikzpicture}
\clip (1.5,.5) rectangle (13,13);
\foreach \x  in {0,...,25} {
        \draw[gray] (\x-11,0) -- ($(\x-11,0)+20*(0.5, {0.5*sqrt(3)})$);
        \draw[gray] (\x,0) -- ($(\x,0)+20*(-0.5, {0.5*sqrt(3)})$);
        \draw[gray] ($\x*(0, {0.5*sqrt(3)})$) -- ($\x*(15, {0.5*sqrt(3)})$);
        \draw[gray] (\x-5,0) -- ($(\x-5,0)+20*(-0.5, {0.5*sqrt(3)})$);
        \draw[gray] ($\x*(0, {0.5*sqrt(3)})$) -- ($\x*(-1, {0.5*sqrt(3)})$);
    }
\pic at (4.5,-.866) {hexagon};
\pic at (1.5,2.6) {hexagon};
\pic at (3,6.92) {hexagon};
\pic at (6,3.46) {hexagon};
\pic at (7.5,7.79) {hexagon};
\pic at (9,0) {hexagon};
\pic at (13.5,.866) {hexagon};
\pic at (10.5,4.33) {hexagon};
\pic at (12,8.66) {hexagon};
\pic at (4.5,11.26) {hexagon};
\pic at (9,12.12) {hexagon};
\pic at (0,10.39) {hexagon};
\pic at (1.5,14.72) {hexagon};
\pic at (13.5,13) {hexagon};
\pic at (15,5.2) {hexagon};

\end{tikzpicture}}
    \caption{$(4,3)$}
    \end{subfigure}
    \hfill
    \begin{subfigure}[t]{.2\textwidth}
        \centering
    \resizebox{1.6in}{!}{\tikzset{
    hexagon/.pic={
\draw[thick, black] (0:1.5*\R) \foreach \x in {60,120,...,359} {
                -- (\x:1.5*\R)
            }-- cycle (90:\R);
\fill [red] (0:0) circle (3pt);
            }}

\begin{tikzpicture}
\clip (1.5,.5) rectangle (13,13);
\foreach \x  in {0,...,25} {
        \draw[gray] (\x-11,0) -- ($(\x-11,0)+20*(0.5, {0.5*sqrt(3)})$);
        \draw[gray] (\x,0) -- ($(\x,0)+20*(-0.5, {0.5*sqrt(3)})$);
        \draw[gray] ($\x*(0, {0.5*sqrt(3)})$) -- ($\x*(15, {0.5*sqrt(3)})$);
        \draw[gray] (\x-5,0) -- ($(\x-5,0)+20*(-0.5, {0.5*sqrt(3)})$);
        \draw[gray] ($\x*(0, {0.5*sqrt(3)})$) -- ($\x*(-1, {0.5*sqrt(3)})$);
    }
\pic at (4,0) {hexagon};
\pic at (8,0) {hexagon};
\pic at (12,0) {hexagon};

\pic at (2,3.46) {hexagon};
\pic at (6,3.46) {hexagon};
\pic at (10,3.46) {hexagon};
\pic at (14,3.46) {hexagon};

\pic at (4,6.92) {hexagon};
\pic at (8,6.92) {hexagon};
\pic at (12,6.92) {hexagon};

\pic at (2,10.39) {hexagon};
\pic at (6,10.39) {hexagon};
\pic at (10,10.39) {hexagon};
\pic at (14,10.39) {hexagon};

\pic at (4,13.85) {hexagon};
\pic at (8,13.85) {hexagon};
\pic at (12,13.85) {hexagon};
\end{tikzpicture}}
    \caption{$(4,4)$}
    \end{subfigure}
\caption{Efficient broadcasts on $\T$.}
    \label{fig:exampleoftheorem}
\end{figure}

\subsection{Upper bounds for broadcast domination numbers}\label{sec:newbounds}
In this section we use Theorem~\ref{theorem:infinitedomination} to establish upper bounds for the $(t,r)$ broadcast domination of triangular matchstick graphs when $(t,r)\in\{(2,1),(3,1),(3,2),(4,1),(4,2),(4,3),(t,t)\}$.

\begin{theorem}\label{lemma:newupperbound21}
If $n=7k+\beta$ with $0\leq \beta\leq 6$, then
\[\gamma_{2,1}(T_n)\leq \begin{cases}
3k(k+2)&\mbox{if $\beta=0$}\\
3(k+1)(k+3)&\mbox{if $\beta\neq 0$}.
\end{cases}\]
\end{theorem}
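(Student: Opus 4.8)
The plan is to specialize the efficient broadcast of Theorem~\ref{theorem:infinitedomination} to $(t,r)=(2,1)$ and then count how many of its dominators are needed to cover a triangular matchstick graph. First I would record that for $t=2$ the reach of a dominator is the ball of radius one: by Lemma~\ref{lemma:reach for general t} it contains $3t^2-3t+1=7$ vertices (the dominator together with its six neighbours), so that $(2,1)$ broadcast domination coincides with ordinary domination. Setting $t=2$, $r=1$ in \eqref{thm:equation}, the efficient pattern places dominators at the vertices $(3x+y)\alpha_1+(2x+3y)\alpha_2$ with $x,y\in\mathbb{Z}$; these form an index-$7$ sublattice of $V(\T)$ whose fundamental domain is a single hexagonal reach. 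Anchoring a dominator at a corner of $T_n$ (the origin is always a dominating vertex), the goal is to select finitely many of these sublattice points whose reaches together cover every vertex of $T_n$, and then to count them.

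I would handle the aligned case $\beta=0$ first. When $n=7k$ the triangular boundary is commensurate with the period-$7$ structure of the pattern, so I would describe explicitly which sublattice points have a reach meeting $T_{7k}$, grouping them into the natural rows of hexagon centres parallel to the base. Summing the per-row counts (an arithmetic progression in the row index) should produce the closed form $3k(k+2)$; coverage of the three edges and the two non-dominator corners is then verified directly, pushing any dominator that lands outside $T_{7k}$ onto the nearest boundary as done in Lemma~\ref{lem:large n t=1} and in the base cases of Theorem~\ref{thm:NEWdomnumberfort=3}. A clean alternative is induction on $k$: assuming a cover of $T_{7k}$ by $3k(k+2)$ dominators, one adjoins a band of $3(k+1)(k+3)-3k(k+2)=6k+9$ new dominators to pass from $T_{7k}$ to $T_{7(k+1)}$, which matches the two displayed formulas.

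For the remaining residues $\beta\neq 0$ the bound follows from monotonicity. Since $n=7k+\beta\le 7(k+1)$, the graph $T_n$ is a subgraph of $T_{7(k+1)}$, so Proposition~\ref{prop:bounding}(3) gives $\gamma_{2,1}(T_n)\le\gamma_{2,1}(T_{7(k+1)})\le 3(k+1)(k+3)$, which is exactly the stated bound. In particular $3(k+1)(k+3)$ is just the $\beta=0$ formula evaluated at $k+1$, so the two cases are really one statement applied at consecutive multiples of $7$.

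The hard part will be the exact count in the aligned case: the hexagonal reaches do not align with the straight edges of $T_{7k}$, so the delicate point is to pin down precisely how many boundary hexagon centres are forced, and where they must be shifted onto the boundary, so that no corner or edge vertex is left with reception $0$. I would resolve this by giving an explicit indexing of the chosen sublattice points in the $\alpha_1,\alpha_2$ coordinates and checking coverage band by band, using the small cases $k=1,2$ as the induction base in the spirit of Theorem~\ref{thm:NEWdomnumberfort=3}.
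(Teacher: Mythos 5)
Your skeleton is sound in two respects. First, the specialization of Theorem~\ref{theorem:infinitedomination} to $(t,r)=(2,1)$ is correctly worked out: the dominators $(3x+y)\alpha_1+(2x+3y)\alpha_2$ do form an index-$7$ sublattice of $V(\T)$ (and in fact $7\alpha_1$ and $7\alpha_2$ lie in this sublattice, which is what makes everything periodic with respect to $T_7$-sized blocks). Second, your treatment of $\beta\neq 0$ by monotonicity is correct and is arguably cleaner than the paper's: since $T_{7k+\beta}$ is a subgraph of $T_{7(k+1)}$, Proposition~\ref{prop:bounding}(3) gives $\gamma_{2,1}(T_n)\leq\gamma_{2,1}(T_{7(k+1)})$, and $3(k+1)(k+3)$ is exactly the aligned formula evaluated at $k+1$; the paper instead re-runs its construction with an extra row of tiles, which amounts to the same thing.

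The genuine gap is that the aligned case, which is the entire content of the theorem, is never proved: you assert that a row-by-row count of pattern points ``should produce'' $3k(k+2)$, offer an unexecuted induction adding $6k+9$ dominators per band as an alternative, and then explicitly defer the boundary bookkeeping as ``the hard part.'' Nothing in the proposal actually establishes $\gamma_{2,1}(T_{7k})\leq 3k(k+2)$ for general $k$. The paper closes precisely this gap with an idea your proposal lacks: tile $T_{7k}$ into $k^2$ copies of $T_7$; each copy is dominated by at most $9$ pattern dominators (Figure~\ref{fig:upperbound21}); whenever two copies share an edge, the $4$ dominators that dominate that common edge are counted twice, and the number of shared edges equals the number of interior edges of $T_k$, which is $3\Delta_{k-1}$ by Lemma~\ref{lem:interioredgesinTn}. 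This yields $\gamma_{2,1}(T_{7k})\leq 9k^2-4\cdot 3\Delta_{k-1}=9k^2-6k(k-1)=3k(k+2)$ with no band-by-band coverage analysis at all. Your direct lattice-point count is not a wrong approach---the set of pattern dominators whose reach meets $T_{7k}$, after pushing exterior ones to the boundary, does have the right size (for $k=1$ it is the nine dominators of Figure~\ref{fig:upperbound21})---but until that count and the coverage-after-pushing argument are actually carried out for all $k$, the proof is incomplete, and some decomposition like the paper's tiling-plus-shared-edge correction is the missing ingredient that makes the count finite work rather than an open-ended computation.
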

\begin{proof}
The method of proof is to embed the matchstick graph $T_n$ with its left bottom corner at the origin. Doing so we then place dominators in $\T$ using Theorem \ref{theorem:infinitedomination} with $t=2$ and $r=1$. 
Note that by construction the origin is a dominator in $T_n$.
By Equation \eqref{thm:equation}, we can compute that in the $(2,1)$ broadcast domination pattern, the  dominator in $T_n$ which is closest to the origin and has coordinates $m\alpha_1$ is given when $m=7$. 
Hence, we can subdivide $T_n$ into $T_{7}$'s, by tiling it as depicted in Figure \ref{fig:subdivision}. Let $n=7k+\beta$ with $0\leq \beta\leq 6$. If  $\beta=0$, then the base of $T_n$ subdivides evenly into $k$ $T_7$'s, if not then we add an additional row of $T_7$'s as depicted in Figure \ref{fig:21subdivisionwithremainder}. 
If $\beta=0$, then the number of $T_7$'s used in the subdivision of $T_n$ is $k^2$, and if not it is $(k+1)^2$. We also note that every $T_7$ can be dominated by at most $9$ dominators, see Figure \ref{fig:upperbound21}. Hence, we have shown that 
\[
\gamma_{2,1}(T_n)\leq \begin{cases}
9k^2&\mbox{if $\beta=0$}\\
9(k+1)^2&\mbox{if $\beta\neq 0$}.
\end{cases}
\]

\begin{figure}[h]
    \centering
    \begin{subfigure}{.4\textwidth}
    \resizebox{\textwidth}{!}{
    \tikzset{
    hex/.pic={
%%%  define vertices with coordinates
\coordinate (0;0) at (0,0); 
\foreach \c in {1,...,2}{%  
\foreach \i in {0,...,5}{% 
\pgfmathtruncatemacro\j{\c*\i}
\coordinate (\c;\j) at (60*\i:\c);  
} }
\foreach \i in {0,2,...,10}{% 
\pgfmathtruncatemacro\j{mod(\i+2,12)} 
\pgfmathtruncatemacro\k{\i+1}
\coordinate (2;\k) at ($(2;\i)!.5!(2;\j)$) ;}

 %%%%%%%%% draw lines %%%%%%%%
 \foreach \i in {0,...,6}{% 
 \pgfmathtruncatemacro\k{\i}
 \pgfmathtruncatemacro\l{10-\i}
 \draw[gray] (2;\k)--(2;\l); 
 \pgfmathtruncatemacro\k{6-\i} 
 \pgfmathtruncatemacro\l{mod(8+\i,12)}   
 \draw[gray] (2;\k)--(2;\l); 
 \pgfmathtruncatemacro\k{9-\i} 
 \pgfmathtruncatemacro\l{mod(9+\i,12)}   
 \draw[gray] (2;\k)--(2;\l);} 
 
%%%%%%%%% draw points %%%%%%%% 
\fill [black] (0;0) circle (3.5pt);

\draw[line width=.75mm, violet] (0:\R) \foreach \x in {60,120,...,359} {
                -- (\x:\R)
            }-- cycle (90:\R);

}}
%\resizebox{\textwidth}{!}{
\begin{tikzpicture}[scale=0.50]
    \foreach \row in {0, 1,...,14} {
        \draw[gray, thick] ($\row*(0.5, {0.5*sqrt(3)})$) -- ($(14,0)+\row*(-0.5, {0.5*sqrt(3)})$);
        \draw[gray, thick] ($\row*(1, 0)$) -- ($(14/2,{14/2*sqrt(3)})+\row*(0.5,{-0.5*sqrt(3)})$);
        \draw[gray, thick] ($\row*(1, 0)$) -- ($(0,0)+\row*(0.5,{0.5*sqrt(3)})$);
    }
    
\draw[ultra thick, black] (.05,.05)--(13.95,.05);   
\draw[ultra thick, black] (.08,.05)--(7.05, 12.08);   
\draw[ultra thick, black] (13.95,.05)--(7.05, 12.08);    
\draw[ultra thick, black] (6.95,.05)--(3.5, 6.06218);
\draw[ultra thick, black] (10.5, 6.06218)--(3.5, 6.06218);
\draw[ultra thick, black] (7,.05)--(10.5, 6.06218);

   \foreach \row in {0, 1,...,14} {
        \draw[gray, thick] ($(10.5, 18.18)+\row*(0.5, {0.5*sqrt(3)})$) -- ($(24.5, 18.1865)+\row*(-0.5, {0.5*sqrt(3)})$);
             \draw[gray, thick] ($(21,0)+\row*(0.5, 0.866025)$) -- ($(10.5, 18.1865)+\row*(0.5,{0.5*sqrt(3)})$);
    }

    \foreach \row in {0,1,...,21} {
        \draw[gray, thick] ($(21,0)+\row*(-0.5, {0.5*sqrt(3)})$) -- ($(35,0)+\row*(-0.5, {0.5*sqrt(3)})$);
        \draw[gray, thick] ($(10.5, 18.1865)-\row*(-0.5, {0.5*sqrt(3)})$) -- ($(17.5, 30.3109)-\row*(-0.5, {0.5*sqrt(3)})$);
    }

    \foreach \row in {0, 1,...,14} {
        \draw[gray, thick] ($(21,0)+\row*(0.5, {0.5*sqrt(3)})$) -- ($(35,0)+\row*(-0.5, {0.5*sqrt(3)})$);
        \draw[gray, thick] ($(21,0)+\row*(1, 0)$) -- ($(28., 12.1244)+\row*(0.5,{-0.5*sqrt(3)})$);
        \draw[gray, thick] ($(21,0)+\row*(1, 0)$) -- ($(21,0)+\row*(0.5,{0.5*sqrt(3)})$);
    }

%some white lines
    \draw[line width=.75mm,white](17., 6.9282)--(24., 19.0526);
    \draw[line width=1.75mm,white](16.5, 7.79423)--(23.5, 19.9186);
    \draw[line width=1.75mm,white]( 16., 8.66025)--(23., 20.7846);   
    \draw[line width=1.75mm,white](16.5, 7.79423)--(17.5, 7.79423);   
    \foreach \row in {0, 1,...,14} {
    \draw[line width=1.75mm,white]($(17., 6.9282)+\row*(0.5, 0.866025)$) -- ($(16., 8.66025)+\row*(0.5, 0.866025)$);    
    }
    \foreach \row in {0, 1,...,21} {
    \draw[line width=1.75mm,white]($(16., 8.66025)+\row*(0.5, 0.866025)$) -- ($(18., 8.66025)+\row*(0.5, 0.866025)$);    
    }

%thick black lines

%left hand side of the figure
\draw[ultra thick, black] (.05,.05)--(13.95,.05);   
\draw[ultra thick, black] (.08,.05)--(7.05, 12.08);   
\draw[ultra thick, black] (13.95,.05)--(7.05, 12.08);    
\draw[ultra thick, black] (6.95,.05)--(3.5, 6.06218);
\draw[ultra thick, black] (10.5, 6.06218)--(3.5, 6.06218);
\draw[ultra thick, black] (7,.05)--(10.5, 6.06218);

\draw[ultra thick, black] (24., 19.0526)--(35., 0.);
\draw[ultra thick, black] (17.5, 30.3109)--(23., 20.7846);
\draw[ultra thick, black] (21.05,.05)--(35,.05);   
\draw[ultra thick, black] (28,0)--(31.5, 6.06218);
\draw[ultra thick, black] (21,0)--(28., 12.1244);
\draw[ultra thick, black] (28,0)--(20.5, 12.9904);
\draw[ultra thick, black] (21,0)--(17., 6.9282);
\draw[ultra thick, black] (17.5, 6.06218)--(31.5, 6.06218);
\draw[ultra thick, black] (24.5, 18.1865)--(17.5, 6.06218);
\draw[ultra thick, black] (21., 12.1244)--(28., 12.1244);
\draw[ultra thick, black] (17.5, 30.3109)--(10.5, 18.1);
\draw[ultra thick, black] (10.5, 18.1865)--(16., 8.66025);
\draw[ultra thick, black] (21., 24.2487)--(14., 24.2487);
\draw[ultra thick, black] (14., 24.2487)--(19.5, 14.7224);
\draw[ultra thick, black] (21., 24.2487)--(14., 12.1244);
\draw[ultra thick, black] (10.5, 18.1865)--(21.5, 18.1865);
\draw[ultra thick, black] (18., 12.1244)--(14., 12.1244);
%\draw[ultra thick, black] ()--();
%\draw[ultra thick, black] ()--();
%\draw[ultra thick, black] ()--();

%red lines making up the triangle
\draw[line width=1.2mm, red] (0,0)--(15,0);
\node[red] at (17,0) {{\bf\Huge$\ldots$}};
\draw[line width=1.2mm, red] (19,0)--(35,0);
\draw[line width=1.2mm, red] (0,0)--(7.5,13);
\node[red, rotate=60] at (8.5, 14.7224) {{\bf\Huge$\ldots$}};
\draw[line width=1.2mm, red] (9.5,16.44)--(17.5, 30.3109)--(23., 20.7846);
\node[red, rotate=-60] at (23.5, 19.9186) {{\bf\Huge$\ldots$}};
\draw[line width=1.2mm, red] (35,0)--(24., 19.0526);
\node[black] at (0,10) {{\bf\Huge$T_n$}};
\draw [decorate,decoration={brace, mirror, amplitude=10pt}, ultra thick, black]
(0,-1) -- (35,-1) node [black,midway,yshift=-.8cm] 
{\Large$7k$};
\end{tikzpicture}%}}
    \caption{$T_n$ with $n=7k$}\label{fig:21perfectsubdivision}
    \end{subfigure}
    \hspace{5mm}
    \begin{subfigure}{.4\textwidth}
    \resizebox{\textwidth}{!}{
    \tikzset{
    hex/.pic={
%%%  define vertices with coordinates
\coordinate (0;0) at (0,0); 
\foreach \c in {1,...,2}{%  
\foreach \i in {0,...,5}{% 
\pgfmathtruncatemacro\j{\c*\i}
\coordinate (\c;\j) at (60*\i:\c);  
} }
\foreach \i in {0,2,...,10}{% 
\pgfmathtruncatemacro\j{mod(\i+2,12)} 
\pgfmathtruncatemacro\k{\i+1}
\coordinate (2;\k) at ($(2;\i)!.5!(2;\j)$) ;}

 %%%%%%%%% draw lines %%%%%%%%
 \foreach \i in {0,...,6}{% 
 \pgfmathtruncatemacro\k{\i}
 \pgfmathtruncatemacro\l{10-\i}
 \draw[gray] (2;\k)--(2;\l); 
 \pgfmathtruncatemacro\k{6-\i} 
 \pgfmathtruncatemacro\l{mod(8+\i,12)}   
 \draw[gray] (2;\k)--(2;\l); 
 \pgfmathtruncatemacro\k{9-\i} 
 \pgfmathtruncatemacro\l{mod(9+\i,12)}   
 \draw[gray] (2;\k)--(2;\l);} 
 
%%%%%%%%% draw points %%%%%%%% 
\fill [black] (0;0) circle (3.5pt);

\draw[line width=.75mm, violet] (0:\R) \foreach \x in {60,120,...,359} {
                -- (\x:\R)
            }-- cycle (90:\R);

}}
% \resizebox{\textwidth}{!}{

\begin{tikzpicture}[scale=0.50]
    \foreach \row in {0, 1,...,14} {
        \draw[gray, thick] ($\row*(0.5, {0.5*sqrt(3)})$) -- ($(14,0)+\row*(-0.5, {0.5*sqrt(3)})$);
        \draw[gray, thick] ($\row*(1, 0)$) -- ($(14/2,{14/2*sqrt(3)})+\row*(0.5,{-0.5*sqrt(3)})$);
        \draw[gray, thick] ($\row*(1, 0)$) -- ($(0,0)+\row*(0.5,{0.5*sqrt(3)})$);
    }
    
\draw[ultra thick, black] (.05,.05)--(13.95,.05);   
\draw[ultra thick, black] (.08,.05)--(7.05, 12.08);   
\draw[ultra thick, black] (13.95,.05)--(7.05, 12.08);    
\draw[ultra thick, black] (6.95,.05)--(3.5, 6.06218);
\draw[ultra thick, black] (10.5, 6.06218)--(3.5, 6.06218);
\draw[ultra thick, black] (7,.05)--(10.5, 6.06218);

   \foreach \row in {0, 1,...,14} {
        \draw[gray, thick] ($(10.5, 18.18)+\row*(0.5, {0.5*sqrt(3)})$) -- ($(24.5, 18.1865)+\row*(-0.5, {0.5*sqrt(3)})$);
             \draw[gray, thick] ($(21,0)+\row*(0.5, 0.866025)$) -- ($(10.5, 18.1865)+\row*(0.5,{0.5*sqrt(3)})$);
    }

    \foreach \row in {0,1,...,21} {
        \draw[gray, thick] ($(21,0)+\row*(-0.5, {0.5*sqrt(3)})$) -- ($(35,0)+\row*(-0.5, {0.5*sqrt(3)})$);
        \draw[gray, thick] ($(10.5, 18.1865)-\row*(-0.5, {0.5*sqrt(3)})$) -- ($(17.5, 30.3109)-\row*(-0.5, {0.5*sqrt(3)})$);
    }

    \foreach \row in {0, 1,...,14} {
        \draw[gray, thick] ($(21,0)+\row*(0.5, {0.5*sqrt(3)})$) -- ($(35,0)+\row*(-0.5, {0.5*sqrt(3)})$);
        \draw[gray, thick] ($(21,0)+\row*(1, 0)$) -- ($(28., 12.1244)+\row*(0.5,{-0.5*sqrt(3)})$);
        \draw[gray, thick] ($(21,0)+\row*(1, 0)$) -- ($(21,0)+\row*(0.5,{0.5*sqrt(3)})$);
    }

%some white lines
    \draw[line width=.75mm,white](17., 6.9282)--(24., 19.0526);
    \draw[line width=1.75mm,white](16.5, 7.79423)--(23.5, 19.9186);
    \draw[line width=1.75mm,white]( 16., 8.66025)--(23., 20.7846);   
    \draw[line width=1.75mm,white](16.5, 7.79423)--(17.5, 7.79423);   
    \foreach \row in {0, 1,...,14} {
    \draw[line width=1.75mm,white]($(17., 6.9282)+\row*(0.5, 0.866025)$) -- ($(16., 8.66025)+\row*(0.5, 0.866025)$);    
    }
    \foreach \row in {0, 1,...,21} {
    \draw[line width=1.75mm,white]($(16., 8.66025)+\row*(0.5, 0.866025)$) -- ($(18., 8.66025)+\row*(0.5, 0.866025)$);    
    }

%thick black lines

%left hand side of the figure
\draw[ultra thick, black] (.05,.05)--(13.95,.05);   
\draw[ultra thick, black] (.08,.05)--(7.05, 12.08);   
\draw[ultra thick, black] (13.95,.05)--(7.05, 12.08);    
\draw[ultra thick, black] (6.95,.05)--(3.5, 6.06218);
\draw[ultra thick, black] (10.5, 6.06218)--(3.5, 6.06218);
\draw[ultra thick, black] (7,.05)--(10.5, 6.06218);

\draw[ultra thick, black] (24., 19.0526)--(35., 0.);
\draw[ultra thick, black] (17.5, 30.3109)--(23., 20.7846);
\draw[ultra thick, black] (21.05,.05)--(35,.05);   
\draw[ultra thick, black] (28,0)--(31.5, 6.06218);
\draw[ultra thick, black] (21,0)--(28., 12.1244);
\draw[ultra thick, black] (28,0)--(20.5, 12.9904);
\draw[ultra thick, black] (21,0)--(17., 6.9282);
\draw[ultra thick, black] (17.5, 6.06218)--(31.5, 6.06218);
\draw[ultra thick, black] (24.5, 18.1865)--(17.5, 6.06218);
\draw[ultra thick, black] (21., 12.1244)--(28., 12.1244);
\draw[ultra thick, black] (17.5, 30.3109)--(10.5, 18.1);
\draw[ultra thick, black] (10.5, 18.1865)--(16., 8.66025);
\draw[ultra thick, black] (21., 24.2487)--(14., 24.2487);
\draw[ultra thick, black] (14., 24.2487)--(19.5, 14.7224);
\draw[ultra thick, black] (21., 24.2487)--(14., 12.1244);
\draw[ultra thick, black] (10.5, 18.1865)--(21.5, 18.1865);
\draw[ultra thick, black] (18., 12.1244)--(14., 12.1244);
%\draw[ultra thick, black] ()--();
%\draw[ultra thick, black] ()--();
%\draw[ultra thick, black] ()--();

%red lines making up the triangle
\draw[line width=1.2mm, red] (0,0)--(15,0);
\node[red] at (17,0) {{\bf\Huge$\ldots$}};
\draw[line width=1.2mm, red] (19,0)--(33,0);
\draw[line width=1.2mm, red] (0,0)--(7.5,13);
\node[red, rotate=60] at (8.5, 14.7224) {{\bf\Huge$\ldots$}};
\draw[line width=1.2mm, red] (9.5,16.44)--(16.5, 28.5788)--(22., 19.0526);
\node[red, rotate=-60] at (22.5, 18.1865) {{\bf\Huge$\ldots$}};
\draw[line width=1.2mm, red] (33,0)--(23., 17.3205);
\node[black] at (0,10) {{\bf\Huge$T_n$}};
\draw [decorate,decoration={brace, mirror, amplitude=10pt}, ultra thick, black]
(0,-1) -- (28,-1) node [black,midway,yshift=-.8cm] 
{\Large$7k$};
\draw [decorate,decoration={brace, mirror, amplitude=10pt}, ultra thick, black]
(28.2,-1) -- (33,-1) node [black,midway,yshift=-.8cm] 
{\Large$\beta$};
% \draw [decorate,decoration={brace, mirror, amplitude=10pt}, ultra thick, black]
% (33.2,-1) -- (35,-1) node [black,midway,yshift=-.8cm] 
% {\Large$7-\beta$};
\end{tikzpicture}%}}
    \caption{$T_n$ with $n=7k+\beta$}\label{fig:21subdivisionwithremainder}
    \end{subfigure}
    \hfill
    \begin{subfigure}{.2\textwidth}
    \resizebox{0.95\textwidth}{!}{
    \input{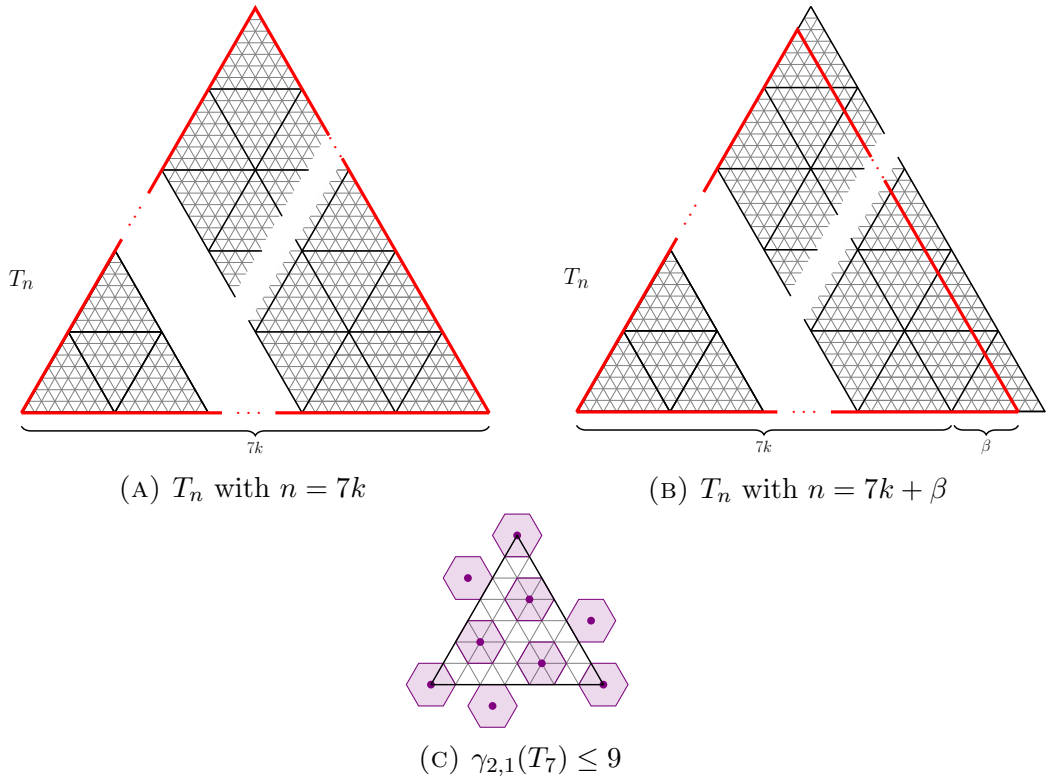}}
    \caption{$\gamma_{2,1}(T_7)\leq 9$}\label{fig:upperbound21}
    \end{subfigure}
\caption{Subdividing $T_n$ into $T_7$'s.}
\label{fig:subdivision}
\end{figure}

However, we are currently double counting the dominators used (per $T_7$) whenever two $T_7$'s share an edge in the subdivision of $T_n$. 
By Lemma \ref{lem:interioredgesinTn} we know that the number of interior edges in $T_\ell$ is given by $3\Delta_{\ell-1}=3\binom{\ell}{2}$.
Since the number of interior edges in the subdivision of $T_n$ into $T_7$'s is the same as the number of interior edges in the graph $T_{k}$ (when $\beta=0$) or $T_{k+1}$ (when $\beta\neq 0$) we can subtract the double count per each of the interior edges by 
noting that 4 dominators are used when dominating any edge of $T_7$. Thus, we have established that 
\[\gamma_{2,1}(T_n)\leq \begin{cases}
9k^2-12\Delta_{k-1}&\mbox{if $\beta=0$}\\
9(k+1)^2-12\Delta_k&\mbox{if $\beta\neq 0$}
\end{cases}\]
from which the result follows.
\end{proof}

\begin{theorem}\label{lemma:newupperbound31}
If $n=19k+\beta$ with $0\leq \beta\leq 18$, then
\[\gamma_{3,1}(T_n)\leq \begin{cases}
9k(k+1)&\mbox{if $\beta=0$}\\
9(k+1)(k+2)&\mbox{if $\beta\neq 0$}.
\end{cases}\]
\end{theorem}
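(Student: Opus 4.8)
The plan is to mirror the proof of Theorem~\ref{lemma:newupperbound21} verbatim, with the tile $T_7$ replaced by $T_{19}$. First I would embed $T_n$ with its bottom-left corner at the origin and place dominators on $\T$ via the efficient pattern of Theorem~\ref{theorem:infinitedomination} with $t=3$ and $r=1$, so that the origin is itself a dominator of $T_n$. Using Equation~\eqref{thm:equation}, the dominator nearest the origin of the form $m\alpha_1$ is obtained by solving $3x+5y=0$ for the smallest positive value of $5x+2y$; the choice $(x,y)=(5,-3)$ gives $m=19$. This is the exact $(3,1)$ analogue of the value $m=7$ in the $(2,1)$ case, and it is what forces the modulus $19$ appearing in the statement.

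Next I would subdivide $T_n$ into copies of $T_{19}$ exactly as in Figure~\ref{fig:subdivision}. Writing $n=19k+\beta$ with $0\le\beta\le 18$, the base splits evenly into $k$ segments of length $19$ when $\beta=0$, yielding $k^2$ tiles; when $\beta\neq 0$ I would append one additional row of tiles to obtain $(k+1)^2$ tiles. The key geometric input, to be displayed in a figure in the spirit of Figure~\ref{fig:upperbound21}, is that a single $T_{19}$ is dominated by at most $18$ dominators of the pattern: its three corners $0$, $19\alpha_1$, and $19\alpha_1+19\alpha_2$ all lie in the pattern (a one-line check as above), and one enumerates the remaining boundary and near-boundary dominators whose hexagonal reach meets $T_{19}$. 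This produces the naive bounds $18k^2$ and $18(k+1)^2$.

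Finally I would remove the overcount from tiles sharing an edge. As in the $(2,1)$ argument, the subdivision of $T_n$ into $T_{19}$'s has the same interior-edge structure as the coarse graph $T_k$ (when $\beta=0$) or $T_{k+1}$ (when $\beta\neq0$), so by Lemma~\ref{lem:interioredgesinTn} it has $3\Delta_{k-1}$ (respectively $3\Delta_{k}$) interior edges. Checking that $6$ dominators are counted twice along each shared edge and subtracting $6$ per interior edge gives
\[
\gamma_{3,1}(T_n)\leq
\begin{cases}
18k^2-6\cdot 3\Delta_{k-1}=18k^2-9k(k-1)=9k(k+1)&\text{if }\beta=0,\\
18(k+1)^2-6\cdot 3\Delta_{k}=18(k+1)^2-9k(k+1)=9(k+1)(k+2)&\text{if }\beta\neq 0,
\end{cases}
\]
which is exactly the claimed bound. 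Note that the requirement of matching both the $k^2$ and the $k$ coefficients of $9k(k+1)$ forces the two geometric constants to be precisely $18$ per tile and $6$ per shared edge, leaving no freedom in the bookkeeping.

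The hard part will be establishing the two geometric facts feeding the count, namely that $18$ dominators suffice for an isolated $T_{19}$ and that exactly $6$ of them are shared along a common edge. These are the $(3,1)$ analogues of the ``$9$ per $T_7$'' and ``$4$ per edge'' facts, but they are more delicate because the reach of a strength-$3$ dominator is a radius-$2$ hexagon: edge vertices may now be served by dominators lying two steps off the edge, so deciding which dominators belong to which tile (and are therefore double counted) requires more care. I expect the cleanest route is to pin down the exact coordinates of every pattern-dominator meeting a fixed $T_{19}$ (as already done above for its corners, and for the absence of interior dominators on the bottom edge, where $3x+5y=0$ forces $c\alpha_1$ with $c\in 19\mathbb{Z}$), and then read off both constants $18$ and $6$ directly from that list and record them in an explicit figure.
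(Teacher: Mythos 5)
Your proposal is correct and follows essentially the same route as the paper's own proof: embed $T_n$ with a corner at the origin, tile it into $k^2$ (or $(k+1)^2$) copies of $T_{19}$, use $18$ dominators per tile, and subtract $6$ per interior edge of the coarse subdivision via Lemma~\ref{lem:interioredgesinTn}, yielding $9k(k+1)$ and $9(k+1)(k+2)$ exactly as in the paper. Your explicit derivation of $m=19$ from $3x+5y=0$, $m=5x+2y$ and the corner membership checks are details the paper leaves implicit (it simply cites Figure~\ref{fig:upperbound31} for the ``$18$ per tile'' and asserts the ``$6$ per edge'' count), so your flagged ``hard part'' is in fact no more rigorous in the published argument than in your plan.
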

\begin{proof}
We proceed as in the previous proof with $t=3$ and $r=1$. 
Note that by construction the origin is a dominator in $T_n$ and by Equation \eqref{thm:equation}, we can compute that in the $(3,1)$ broadcast domination pattern, the  dominator in $T_n$ which is closest to the origin and has coordinates $m\alpha_1$ is given when $m=19$. 
Hence, we can subdivide $T_n$ into $T_{19}$'s, as before. Let $n=19k+\beta$ with $0\leq \beta\leq 18$. If  $\beta=0$, then the base of $T_n$ subdivides evenly into $k$ $T_{19}$'s, if not then we add an additional row of $T_{19}$'s. 
If $\beta=0$, then the number of $T_{19}$'s used in the subdivision of $T_n$ is $k^2$, and if not it is $(k+1)^2$. We also note that every $T_{19}$ can be dominated by at most $18$ dominators, see Figure \ref{fig:upperbound31}. Hence, we have shown that 
\[
\gamma_{3,1}(T_n)\leq \begin{cases}
18k^2&\mbox{if $\beta=0$}\\
18(k+1)^2&\mbox{if $\beta\neq 0$}.
\end{cases}
\]

\begin{figure}[h]
    \centering
    \input{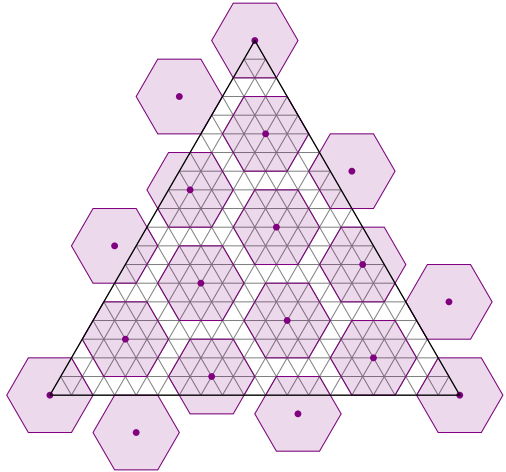}
    \caption{Dominating $T_{19}$ when $t=3$ and $r=1$.}
    \label{fig:upperbound31}
\end{figure}

However, we are currently double counting the dominators used (per $T_{19}$) whenever two $T_{19}$'s share an edge in the subdivision of $T_n$. Recall that the number of interior edges in $T_\ell$ is given by $3\Delta_{\ell-1}$. 
Since the number of interior edges in the subdivision of $T_n$ into $T_{19}$'s is the same as the number of interior edges in the graph $T_{k}$ (when $\beta=0$) or $T_{k+1}$ (when $\beta\neq 0$) we can subtract the double count per each of the interior edges by 
noting that $6$ dominators are used when dominating any edge of $T_{19}$. Thus, we have established that 
\[\gamma_{3,1}(T_n)\leq \begin{cases}
18k^2-18\Delta_{k-1}&\mbox{if $\beta=0$}\\
18(k+1)^2-18\Delta_k&\mbox{if $\beta\neq 0$}
\end{cases}\]
from which the result follows.
\end{proof}
Note that the upper bound presented in Theorem~\ref{lemma:newupperbound31} is better than that of Theorem~\ref{thm:NEWdomnumberfort=3} as $\Delta_{\left\lfloor\frac{n}{4}+1\right\rfloor}$ grows like $ \frac{n^2}{32}$, while the bound in Theorem~\ref{lemma:newupperbound31} grows like $\frac{9n^2}{361}$, which is smaller.

Using analogous arguments, as those presented in the proofs of Theorems \ref{lemma:newupperbound21} and \ref{lemma:newupperbound31}, we establish upper bounds for the $(t,r)$ broadcast domination of $T_n$, when \[(t,r)\in\{(3,2),(4,1),(4,2),(4,3),(t,t)\}.\] We consolidate these results in Table~\ref{tableofbounds2}.

    \begin{longtable}{|c|m{1in}|m{1.7in}|c|}\hline
$(t,r)$         &$n=\ell k+\beta$, $0\leq\beta\leq k-1$  &Dominating $T_\ell$&Upper bound for $\gamma_{t,r}(T_n)$   \\\hline\hline
         $(2,1)$& $n=7k+\beta$, $0\leq \beta\leq 6$
         &\resizebox{1.6in}{!}{\tikzset{
    hexagon/.pic={%%%  define vertices with coordinates
\coordinate (0;0) at (0,0); 
\foreach \c in {1,...,2}{%  
\foreach \i in {0,...,5}{% 
\pgfmathtruncatemacro\j{\c*\i}
\coordinate (\c;\j) at (60*\i:\c);  
} }

%\foreach \i in {0,2,...,10}{% 
%\pgfmathtruncatemacro\j{mod(\i+2,12)} 
%\pgfmathtruncatemacro\k{\i+1}
%\coordinate (2;\k) at ($(2;\i)!.5!(2;\j)$) ;}
%

 %%%%%%%%% draw lines %%%%%%%%
% \foreach \i in {0,...,6}{% 
% \pgfmathtruncatemacro\k{\i}
% \pgfmathtruncatemacro\l{10-\i}
% \draw[gray, dashed] (2;\k)--(2;\l);
% \pgfmathtruncatemacro\k{6-\i} 
% \pgfmathtruncatemacro\l{mod(8+\i,12)}   
% \draw[gray, dashed] (2;\k)--(2;\l); 
% \pgfmathtruncatemacro\k{9-\i} 
% \pgfmathtruncatemacro\l{mod(9+\i,12)}   
% \draw[gray, dashed] (2;\k)--(2;\l);} 
 
%%%%%%%%% draw points %%%%%%%% 
\fill [violet] (0;0) circle (4.5pt);
\newdimen\R
\R=1 cm
\filldraw[fill=violet!80!violet,opacity=0.15] (0:\R) \foreach \x in {60,120,...,359} {
                -- (\x:\R)
            }-- cycle (90:\R);
\draw[thick, violet] (0:\R) \foreach \x in {60,120,...,359} {
                -- (\x:\R)
            }-- cycle (90:\R);
            }}
\begin{tikzpicture}
    \foreach \row in {0, 1,...,7} {
        \draw[gray, thick] ($\row*(0.5, {0.5*sqrt(3)})$) -- ($(7,0)+\row*(-0.5, {0.5*sqrt(3)})$);
        \draw[gray, thick] ($\row*(1, 0)$) -- ($(7/2,{7/2*sqrt(3)})+\row*(0.5,{-0.5*sqrt(3)})$);
        \draw[gray, thick] ($\row*(1, 0)$) -- ($(0,0)+\row*(0.5,{0.5*sqrt(3)})$);
    }
\pic at (0,0)  {hexagon};
\pic at (7,0)  {hexagon};
\pic at (2.5, -0.866025) {hexagon};
\pic at (4.5, 0.866025) {hexagon};
\pic at (6.5, 2.59808) {hexagon};
\pic at (2., 1.73205) {hexagon};
\pic at (4., 3.4641) {hexagon};
\pic at (1.5, 4.33013) {hexagon};
\pic at (3.5, 6.06218) {hexagon};
%\filldraw[fill=violet!80!violet,opacity=0.3] (4.4,.7)--(9.6,.7)--(5,8.7)--(2.4,4.2)--cycle;
\draw[black, ultra thick](0,0)--(7,0)--(3.5, 6.06218)--(0,0);
\end{tikzpicture}
}
         &$\gamma_{2,1}(T_n)\leq \begin{cases}
3k(k+2)&\mbox{if $\beta=0$}\\
3(k+1)(k+3)&\mbox{if $\beta\neq 0$}.
\end{cases}$\\\hline
         $(3,1)$&$n=19k+\beta$, $0\leq \beta\leq 18$&\resizebox{1.6in}{!}{\tikzset{
    hexagon/.pic={%%%  define vertices with coordinates
\coordinate (0;0) at (0,0); 
\foreach \c in {1,...,2}{%  
\foreach \i in {0,...,5}{% 
\pgfmathtruncatemacro\j{\c*\i}
\coordinate (\c;\j) at (60*\i:\c);  
} }

% \foreach \i in {0,2,...,10}{% 
% \pgfmathtruncatemacro\j{mod(\i+2,12)} 
% \pgfmathtruncatemacro\k{\i+1}
% \coordinate (2;\k) at ($(2;\i)!.5!(2;\j)$) ;}

%  %%%%%%%%% draw lines %%%%%%%%
%  \foreach \i in {0,...,6}{% 
%  \pgfmathtruncatemacro\k{\i}
%  \pgfmathtruncatemacro\l{10-\i}
%  \draw[gray, dashed] (2;\k)--(2;\l);
%  \pgfmathtruncatemacro\k{6-\i} 
%  \pgfmathtruncatemacro\l{mod(8+\i,12)}   
%  \draw[gray, dashed] (2;\k)--(2;\l); 
%  \pgfmathtruncatemacro\k{9-\i} 
%  \pgfmathtruncatemacro\l{mod(9+\i,12)}   
%  \draw[gray, dashed] (2;\k)--(2;\l);} 
 
%%%%%%%%% draw points %%%%%%%% 
\fill [violet] (0;0) circle (4.5pt);
\newdimen\R
\R=2 cm
\filldraw[fill=violet!80!violet,opacity=0.15] (0:\R) \foreach \x in {60,120,...,359} {
                -- (\x:\R)
            }-- cycle (90:\R);
\draw[thick, violet] (0:\R) \foreach \x in {60,120,...,359} {
                -- (\x:\R)
            }-- cycle (90:\R);
            }}
\resizebox{.4\textwidth}{!}{%
 \begin{tikzpicture}
    \foreach \row in {0, 1,...,19} {
        \draw[gray, thick] ($\row*(0.5, {0.5*sqrt(3)})$) -- ($(19,0)+\row*(-0.5, {0.5*sqrt(3)})$);
        \draw[gray, thick] ($\row*(1, 0)$) -- ($(19/2,{19/2*sqrt(3)})+\row*(0.5,{-0.5*sqrt(3)})$);
        \draw[gray, thick] ($\row*(1, 0)$) -- ($(0,0)+\row*(0.5,{0.5*sqrt(3)})$);
    }
\pic at (0,0)  {hexagon};
\pic at (19,0)  {hexagon};
\pic at (9.5, 16.4545)  {hexagon};
\pic at (4., -1.73205)  {hexagon};
\pic at (3.5, 2.59808)  {hexagon};
\pic at (7.5, 0.866025)  {hexagon};
\pic at (11.5, -0.866025)  {hexagon};
\pic at (15., 1.73205)  {hexagon};
\pic at (3., 6.9282)  {hexagon};
\pic at (7., 5.19615)  {hexagon};
\pic at (11., 3.4641)  {hexagon};
\pic at (6.5, 9.52628)  {hexagon};
\pic at (10.5, 7.79423)  {hexagon};
\pic at (14.5, 6.06218)  {hexagon};
\pic at (18.5, 4.33013)  {hexagon};
\pic at (6., 13.8564)  {hexagon};
\pic at (10., 12.1244)  {hexagon};
\pic at (14., 10.3923)  {hexagon};
\draw[black, ultra thick](0,0)--(19,0)--(9.5, 16.4545)--(0,0);
\end{tikzpicture}}} &$\gamma_{3,1}(T_n)\leq\begin{cases}
9k(k+1)&\mbox{if $\beta=0$}\\
9(k+1)(k+2)&\mbox{if $\beta\neq 0$}.
\end{cases}$\\\hline
         $(3,2)$&$n=13k+\beta$, $0\leq \beta\leq 12$ &\resizebox{1.6in}{!}{

\tikzset{
    hexagon/.pic={%%%  define vertices with coordinates
\coordinate (0;0) at (0,0); 
\foreach \c in {1,...,2}{%  
\foreach \i in {0,...,5}{% 
\pgfmathtruncatemacro\j{\c*\i}
\coordinate (\c;\j) at (60*\i:\c);  
} }

% \foreach \i in {0,2,...,10}{% 
% \pgfmathtruncatemacro\j{mod(\i+2,12)} 
% \pgfmathtruncatemacro\k{\i+1}
% \coordinate (2;\k) at ($(2;\i)!.5!(2;\j)$) ;}

%  %%%%%%%%% draw lines %%%%%%%%
%  \foreach \i in {0,...,6}{% 
%  \pgfmathtruncatemacro\k{\i}
%  \pgfmathtruncatemacro\l{10-\i}
%  \draw[gray, dashed] (2;\k)--(2;\l);
%  \pgfmathtruncatemacro\k{6-\i} 
%  \pgfmathtruncatemacro\l{mod(8+\i,12)}   
%  \draw[gray, dashed] (2;\k)--(2;\l); 
%  \pgfmathtruncatemacro\k{9-\i} 
%  \pgfmathtruncatemacro\l{mod(9+\i,12)}   
%  \draw[gray, dashed] (2;\k)--(2;\l);} 
 
%%%%%%%%% draw points %%%%%%%% 
\fill [violet] (0;0) circle (4.5pt);
\newdimen\R
\R=2 cm
\filldraw[fill=violet!80!violet,opacity=0.15] (0:\R) \foreach \x in {60,120,...,359} {
                -- (\x:\R)
            }-- cycle (90:\R);
\draw[thick, violet] (0:\R) \foreach \x in {60,120,...,359} {
                -- (\x:\R)
            }-- cycle (90:\R);
            }}
\resizebox{1\textwidth}{!}{%
 \begin{tikzpicture}
    \foreach \row in {0, 1,...,13} {
        \draw[gray, thick] ($\row*(0.5, {0.5*sqrt(3)})$) -- ($(13,0)+\row*(-0.5, {0.5*sqrt(3)})$);
        \draw[gray, thick] ($\row*(1, 0)$) -- ($(13/2,{13/2*sqrt(3)})+\row*(0.5,{-0.5*sqrt(3)})$);
        \draw[gray, thick] ($\row*(1, 0)$) -- ($(0,0)+\row*(0.5,{0.5*sqrt(3)})$);
    }
\pic at (0,0)  {hexagon};
\pic at (13,0)  {hexagon};
\pic at (3.5, -0.866025)  {hexagon};
\pic at (7., -1.73205)  {hexagon};
\pic at (2.5, 2.59808)  {hexagon};
\pic at (6., 1.73205)  {hexagon};
\pic at (9.5, 0.866025)  {hexagon};
\pic at (5., 5.19615)  {hexagon};
\pic at (8.5, 4.33013)  {hexagon};
\pic at (12., 3.4641)  {hexagon};
\pic at (1.5, 6.06218)  {hexagon};
\pic at (4., 8.66025)  {hexagon};
\pic at (7.5, 7.79423)  {hexagon};
\pic at (11., 6.9282)  {hexagon};
\pic at (6.5, 11.2583)  {hexagon};
\draw[black, ultra thick] (0, 0)-- (13,0)-- (6.5, 11.2583)-- (0, 0);
\end{tikzpicture}}}&$\gamma_{3,2}(T_n)\leq\begin{cases}
         3k(2k+3)&\mbox{if $\beta=0$}\\3(k+1)(2k+5)&\mbox{if $\beta\neq0$}\end{cases}$\\\hline
          $(4,1)$&$n=37k+\beta$, $0\leq \beta\leq 36$ &\resizebox{1.6in}{!}{\tikzset{
    hexagon/.pic={%%%  define vertices with coordinates
\coordinate (0;0) at (0,0); 
\foreach \c in {1,...,2}{%  
\foreach \i in {0,...,5}{% 
\pgfmathtruncatemacro\j{\c*\i}
\coordinate (\c;\j) at (60*\i:\c);  
} }

%\foreach \i in {0,2,...,10}{% 
%\pgfmathtruncatemacro\j{mod(\i+2,12)} 
%\pgfmathtruncatemacro\k{\i+1}
%\coordinate (2;\k) at ($(2;\i)!.5!(2;\j)$) ;}
%
%
% %%%%%%%%% draw lines %%%%%%%%
% \foreach \i in {0,...,6}{% 
% \pgfmathtruncatemacro\k{\i}
% \pgfmathtruncatemacro\l{10-\i}
% \draw[gray, dashed] (2;\k)--(2;\l);
% \pgfmathtruncatemacro\k{6-\i} 
% \pgfmathtruncatemacro\l{mod(8+\i,12)}   
% \draw[gray, dashed] (2;\k)--(2;\l); 
% \pgfmathtruncatemacro\k{9-\i} 
% \pgfmathtruncatemacro\l{mod(9+\i,12)}   
% \draw[gray, dashed] (2;\k)--(2;\l);} 
 
%%%%%%%%% draw points %%%%%%%% 
\fill [violet] (0;0) circle (4.5pt);
\newdimen\R
\R=3 cm
\filldraw[fill=violet!80!violet,opacity=0.15] (0:\R) \foreach \x in {60,120,...,359} {
                -- (\x:\R)
            }-- cycle (90:\R);
\draw[thick, violet] (0:\R) \foreach \x in {60,120,...,359} {
                -- (\x:\R)
            }-- cycle (90:\R);
            }}
\resizebox{1\textwidth}{!}{%
 \begin{tikzpicture}
    \foreach \row in {0, 1,...,37} {
        \draw[gray, thick] ($\row*(0.5, {0.5*sqrt(3)})$) -- ($(37,0)+\row*(-0.5, {0.5*sqrt(3)})$);
        \draw[gray, thick] ($\row*(1, 0)$) -- ($(37/2,{37/2*sqrt(3)})+\row*(0.5,{-0.5*sqrt(3)})$);
        \draw[gray, thick] ($\row*(1, 0)$) -- ($(0,0)+\row*(0.5,{0.5*sqrt(3)})$);
    }
\pic at (0,0)  {hexagon};
\pic at (37,0)  {hexagon};
\pic at (5.5, -2.59808)  {hexagon};
\pic at (5., 3.4641)  {hexagon};
\pic at (10.5, 0.866025)  {hexagon};
\pic at (16., -1.73205)  {hexagon};
\pic at (4.5, 9.52628)  {hexagon};
\pic at (10., 6.9282)  {hexagon};
\pic at (15.5, 4.33013)  {hexagon};
\pic at (21., 1.73205)  {hexagon};
\pic at (26.5, -0.866025)  {hexagon};
\pic at (31.5, 2.59808)  {hexagon};
\pic at (26., 5.19615)  {hexagon};
\pic at (20.5, 7.79423)  {hexagon};
\pic at (15., 10.3923)  {hexagon};
\pic at (9.5, 12.9904)  {hexagon};
\pic at (9., 19.0526)  {hexagon};
\pic at (14.5, 16.4545)  {hexagon};
\pic at (20., 13.8564)  {hexagon};
\pic at (25.5, 11.2583)  {hexagon};
\pic at (31., 8.66025)  {hexagon};
\pic at (36.5, 6.06218)  {hexagon};
\pic at (30.5, 14.7224)  {hexagon};
\pic at (25., 17.3205)  {hexagon};
\pic at (19.5, 19.9186)  {hexagon};
\pic at (14., 22.5167)  {hexagon};
\pic at (13.5, 28.5788)  {hexagon};
\pic at (19., 25.9808)  {hexagon};
\pic at (24.5, 23.3827)  {hexagon};
\pic at (18.5, 32.0429)  {hexagon};
\draw[black, ultra thick] (0, 0)-- (37,0)-- (18.5, 32.0429)-- (0, 0);
\end{tikzpicture}}}&$\gamma_{4,1}(T_n)\leq\begin{cases}6k(3k+2)&\mbox{if $\beta=0$}\\6(k+1)(3k+5)&\mbox{if $\beta\neq0$}\end{cases}$\\\hline
         $(4,2)$&$n=14k+\beta$, $0\leq \beta\leq 13$ &\resizebox{1.6in}{!}{

\tikzset{
    hexagon/.pic={%%%  define vertices with coordinates
\coordinate (0;0) at (0,0); 
\foreach \c in {1,...,2}{%  
\foreach \i in {0,...,5}{% 
\pgfmathtruncatemacro\j{\c*\i}
\coordinate (\c;\j) at (60*\i:\c);  
} }

%\foreach \i in {0,2,...,10}{% 
%\pgfmathtruncatemacro\j{mod(\i+2,12)} 
%\pgfmathtruncatemacro\k{\i+1}
%\coordinate (2;\k) at ($(2;\i)!.5!(2;\j)$) ;}
%
%
% %%%%%%%%% draw lines %%%%%%%%
% \foreach \i in {0,...,6}{% 
% \pgfmathtruncatemacro\k{\i}
% \pgfmathtruncatemacro\l{10-\i}
% \draw[gray, dashed] (2;\k)--(2;\l);
% \pgfmathtruncatemacro\k{6-\i} 
% \pgfmathtruncatemacro\l{mod(8+\i,12)}   
% \draw[gray, dashed] (2;\k)--(2;\l); 
% \pgfmathtruncatemacro\k{9-\i} 
% \pgfmathtruncatemacro\l{mod(9+\i,12)}   
% \draw[gray, dashed] (2;\k)--(2;\l);} 
 
%%%%%%%%% draw points %%%%%%%% 
\fill [violet] (0;0) circle (4.5pt);
\newdimen\R
\R=3 cm
\filldraw[fill=violet!80!violet,opacity=0.15] (0:\R) \foreach \x in {60,120,...,359} {
                -- (\x:\R)
            }-- cycle (90:\R);
\draw[thick, violet] (0:\R) \foreach \x in {60,120,...,359} {
                -- (\x:\R)
            }-- cycle (90:\R);
            }}
\resizebox{1\textwidth}{!}{%
 \begin{tikzpicture}
    \foreach \row in {0, 1,...,14} {
        \draw[gray, thick] ($\row*(0.5, {0.5*sqrt(3)})$) -- ($(14,0)+\row*(-0.5, {0.5*sqrt(3)})$);
        \draw[gray, thick] ($\row*(1, 0)$) -- ($(14/2,{14/2*sqrt(3)})+\row*(0.5,{-0.5*sqrt(3)})$);
        \draw[gray, thick] ($\row*(1, 0)$) -- ($(0,0)+\row*(0.5,{0.5*sqrt(3)})$);
    }
\pic at (0,0)  {hexagon};
\pic at (14,0)  {hexagon};
\pic at (7., 12.1244)  {hexagon};
\pic at (5., -1.73205)  {hexagon};
\pic at (4., 3.4641)  {hexagon};
\pic at (9., 1.73205)  {hexagon};
\pic at (3., 8.66025)  {hexagon};
\pic at (8., 6.9282)  {hexagon};
\pic at (13., 5.19615)  {hexagon};
\draw[black, ultra thick] (0, 0)-- (14,0)-- (7., 12.1244)-- (0, 0);
\end{tikzpicture}}
}&$\gamma_{4,2}(T_n)\leq\begin{cases}3k(k+2)&\mbox{if $\beta=0$}\\3(k+1)(k+3)&\mbox{if $\beta\neq0$}\end{cases}$\\\hline
          $(4,3)$&$n=21k+\beta$, $0\leq \beta\leq 20$ &\resizebox{1.6in}{!}{
\tikzset{
    hexagon/.pic={%%%  define vertices with coordinates
\coordinate (0;0) at (0,0); 
\foreach \c in {1,...,2}{%  
\foreach \i in {0,...,5}{% 
\pgfmathtruncatemacro\j{\c*\i}
\coordinate (\c;\j) at (60*\i:\c);  
} }

%\foreach \i in {0,2,...,10}{% 
%\pgfmathtruncatemacro\j{mod(\i+2,12)} 
%\pgfmathtruncatemacro\k{\i+1}
%\coordinate (2;\k) at ($(2;\i)!.5!(2;\j)$) ;}
%
%
% %%%%%%%%% draw lines %%%%%%%%
% \foreach \i in {0,...,6}{% 
% \pgfmathtruncatemacro\k{\i}
% \pgfmathtruncatemacro\l{10-\i}
% \draw[gray, dashed] (2;\k)--(2;\l);
% \pgfmathtruncatemacro\k{6-\i} 
% \pgfmathtruncatemacro\l{mod(8+\i,12)}   
% \draw[gray, dashed] (2;\k)--(2;\l); 
% \pgfmathtruncatemacro\k{9-\i} 
% \pgfmathtruncatemacro\l{mod(9+\i,12)}   
% \draw[gray, dashed] (2;\k)--(2;\l);} 
 
%%%%%%%%% draw points %%%%%%%% 
\fill [violet] (0;0) circle (4.5pt);
\newdimen\R
\R=3 cm
\filldraw[fill=violet!80!violet,opacity=0.15] (0:\R) \foreach \x in {60,120,...,359} {
                -- (\x:\R)
            }-- cycle (90:\R);
\draw[thick, violet] (0:\R) \foreach \x in {60,120,...,359} {
                -- (\x:\R)
            }-- cycle (90:\R);
            }}
\resizebox{1\textwidth}{!}{%
 \begin{tikzpicture}
    \foreach \row in {0, 1,...,21} {
        \draw[gray, thick] ($\row*(0.5, {0.5*sqrt(3)})$) -- ($(21,0)+\row*(-0.5, {0.5*sqrt(3)})$);
        \draw[gray, thick] ($\row*(1, 0)$) -- ($(21/2,{21/2*sqrt(3)})+\row*(0.5,{-0.5*sqrt(3)})$);
        \draw[gray, thick] ($\row*(1, 0)$) -- ($(0,0)+\row*(0.5,{0.5*sqrt(3)})$);
    }
\pic at (0,0)  {hexagon};
\pic at (21,0)  {hexagon};
\pic at (10.5, 18.1865)  {hexagon};
\pic at (4.5, -0.866025)  {hexagon};
\pic at (9., -1.73205)  {hexagon};
\pic at (13.5, -2.59808)  {hexagon};
\pic at (3., 3.4641)  {hexagon};
\pic at (7.5, 2.59808)  {hexagon};
\pic at (12., 1.73205)  {hexagon};
\pic at (16.5, 0.866025)  {hexagon};
\pic at (1.5, 7.79423)  {hexagon};
\pic at (6., 6.9282)  {hexagon};
\pic at (10.5, 6.06218)  {hexagon};
\pic at (15., 5.19615)  {hexagon};
\pic at (19.5, 4.33013)  {hexagon};
\pic at (4.5, 11.2583)  {hexagon};
\pic at (9., 10.3923)  {hexagon};
\pic at (13.5, 9.52628)  {hexagon};
\pic at (18., 8.66025)  {hexagon};
\pic at (7.5, 14.7224)  {hexagon};
\pic at (12., 13.8564)  {hexagon};
\pic at (16.5, 12.9904)  {hexagon};
\draw[black, ultra thick] (0, 0)-- (21,0)-- (10.5, 18.1865)-- (0, 0);
\end{tikzpicture}}
}&$\gamma_{4,3}(T_n)\leq\begin{cases}2k(5k+6)&\mbox{if $\beta=0$}\\2(k+1)(5k+11)&\mbox{if $\beta\neq0$}\end{cases}$\\\hline
        %  $(5,1)$&$n=k+\beta$, $0\leq \beta\leq $ &$\gamma_{5,1}(T_n)\leq\begin{cases}&\mbox{if $\beta=0$}\\&\mbox{if $\beta\neq0$}\end{cases}$\\\hline
        %  $(5,2)$&$n=k+\beta$, $0\leq \beta\leq $ &$\gamma_{5,2}(T_n)\leq\begin{cases}&\mbox{if $\beta=0$}\\&\mbox{if $\beta\neq0$}\end{cases}$\\\hline
        %  $(5,3)$&$n=k+\beta$, $0\leq \beta\leq $ &$\gamma_{5,3}(T_n)\leq\begin{cases}&\mbox{if $\beta=0$}\\&\mbox{if $\beta\neq0$}\end{cases}$\\\hline
        %   $(5,4)$&$n=31k+\beta$, $0\leq \beta\leq 30$ &\includegraphics[width=1.1in]{upperbound54}&$\gamma_{5,4}(T_n)\leq\begin{cases}15k(k+1)&\mbox{if $\beta=0$}\\15(k+1)(k+2)&\mbox{if $\beta\neq0$}\end{cases}$\\\hline
        $(t,t)$&$n=tk+\beta$, $0\leq \beta\leq t-1$ &\resizebox{1.6in}{!}{
\tikzset{
    hexagon/.pic={%%%  define vertices with coordinates
\coordinate (0;0) at (0,0); 
\foreach \c in {1,...,2}{%  
\foreach \i in {0,...,5}{% 
\pgfmathtruncatemacro\j{\c*\i}
\coordinate (\c;\j) at (60*\i:\c);  
} }

%\foreach \i in {0,2,...,10}{% 
%\pgfmathtruncatemacro\j{mod(\i+2,12)} 
%\pgfmathtruncatemacro\k{\i+1}
%\coordinate (2;\k) at ($(2;\i)!.5!(2;\j)$) ;}
%
%
% %%%%%%%%% draw lines %%%%%%%%
% \foreach \i in {0,...,6}{% 
% \pgfmathtruncatemacro\k{\i}
% \pgfmathtruncatemacro\l{10-\i}
% \draw[gray, dashed] (2;\k)--(2;\l);
% \pgfmathtruncatemacro\k{6-\i} 
% \pgfmathtruncatemacro\l{mod(8+\i,12)}   
% \draw[gray, dashed] (2;\k)--(2;\l); 
% \pgfmathtruncatemacro\k{9-\i} 
% \pgfmathtruncatemacro\l{mod(9+\i,12)}   
% \draw[gray, dashed] (2;\k)--(2;\l);} 
 
%%%%%%%%% draw points %%%%%%%% 
\fill [violet] (0;0) circle (7pt);
\newdimen\R
\R=20 cm
\filldraw[fill=violet!80!violet,opacity=0.15] (0:\R) \foreach \x in {60,120,...,359} {
                -- (\x:\R)
            }-- cycle (90:\R);
\draw[thick, violet] (0:\R) \foreach \x in {60,120,...,359} {
                -- (\x:\R)
            }-- cycle (90:\R);
            }}
\resizebox{1\textwidth}{!}{%
 \begin{tikzpicture}
    \foreach \row in {0, 1,...,21} {
        \draw[gray, thick] ($\row*(0.5, {0.5*sqrt(3)})$) -- ($(21,0)+\row*(-0.5, {0.5*sqrt(3)})$);
        \draw[gray, thick] ($\row*(1, 0)$) -- ($(21/2,{21/2*sqrt(3)})+\row*(0.5,{-0.5*sqrt(3)})$);
        \draw[gray, thick] ($\row*(1, 0)$) -- ($(0,0)+\row*(0.5,{0.5*sqrt(3)})$);
    }
\pic at (0,0)  {hexagon};
\pic at (21,0)  {hexagon};
\pic at (10.5, 18.1865)  {hexagon};
\draw[black, ultra thick] (0, 0)-- (21,0)-- (10.5, 18.1865)-- (0, 0);
\end{tikzpicture}}
}&$\gamma_{t,t}(T_n)\leq\begin{cases}3k&\mbox{if $\beta=0$}\\3(k+1)&\mbox{if $\beta\neq0$}\end{cases}$\\\hline
    \caption{Table of upper bounds for $\gamma_{t,r}(T_n)$ when $(t,r)\in\{(2,1), (3,1), (3,2),(4,1),(4,2),(4,3),(t,t)\}$.}
    \label{tableofbounds2}
    \end{longtable}

\section{Open Problems}
In Section \ref{sec:newbounds} we provided a way to use the $(t,r)$ broadcast dominating patterns in Theorem \ref{theorem:infinitedomination} to find upper bounds for the $(t,r)$ broadcast domination numbers for all finite matchstick graphs $T_n$. Using this technique we established bounds for $\gamma_{t,r}(T_n)$, for $(t,r)\in\{(2,1),(3,1),(3,2),(4,1),(4,2),(4,3),(t,t)\}$. However, we never accounted for a rearrangement of the boundary dominators so as to be more efficient in the domination of the boundary of $T_n$. This leads to the first open problem.

\begin{question}\label{q:1}
Accounting for a rearrangement of the dominators on the boundary of $T_n$, give better bounds for $\gamma_{t,r}(T_n)$ for $(t,r)\in\{(2,1),(3,1),(3,2),(4,1),(4,2),(4,3),(t,t)\}$.
More generally,
by using the technique presented in Section \ref{sec:newbounds} and by reducing the number of dominators needed on the boundary of $T_n$, find sharper bounds for $\gamma_{t,r}(T_n)$, for $t>r\geq 1$.
\end{question}
Another problem of interest would be to give lower bounds akin to the intuitive bound provided for $\gamma_{t,1}(T_n)$ in Lemma \ref{lemma:lowerbound}.

Lastly, in Section \ref{sec:infinitegrid} we presented two efficient $\tr$ broadcast dominating patterns for $\T$, one being a mirror image of the other. One can then ask:
\begin{question}
Are there other efficient $\tr$ broadcast dominating patterns for $\T$?
\end{question}

\section*{Acknowledgments}
We thank Erik Insko for introducing us to $(t,r)$ broadcast domination and for his many helpful conversations during the completion of this manuscript.

\bibliography{triangles}

\end{document}